\DeclareMathOperator{\tr}{Tr}
\newcommand{\1}{\mathbf{1}}
\newcommand{\p}{\mathbb{P}}
\newcommand{\E}{\mathbb{E}}
\newcommand{\N}{\mathbb{N}}
\newcommand{\R}{\mathbb{R}}
\newcommand{\MS}{\mathcal{M}_{sym}^{d\times d}}
\newcommand{\id}{\mathbf{I}}
\newcommand{\Var}{\mathrm{Var}}
\newcommand{\supp}{\mathrm{supp}}
\newtheorem{theorem}{Theorem}[section]
\newtheorem{lemma}[theorem]{Lemma}
\newtheorem{cor}[theorem]{Corollary} 
\theoremstyle{definition}
\newtheorem{defi}[theorem]{Definition}
\theoremstyle{remark}
\newtheorem{remark}[theorem]{Remark}
\numberwithin{equation}{section}
\title[Matrix concentration for dependent binary random variables]{Matrix concentration inequalities for dependent binary random variables}
\author{Radosław Adamczak
}
\thanks{R.A.'s research was partially supported by The National Science Center, Poland, grant IMPRESS-U 2023/05/Y/ST1/00188}
\author{Ioannis Kavvadias}
\address{University of Warsaw, Poland}
\email{r.adamczak@mimuw.edu.pl, i.kavvadias@uw.edu.pl}
\begin{document}

\begin{abstract} We prove Bernstein-type matrix concentration inequalities for linear combinations with matrix coefficients of binary random variables satisfying certain $\ell_\infty$-independence assumptions, complementing recent results by Kaufman, Kyng and Solda. For random variables with the Stochastic Covering Property or Strong Rayleigh Property we prove estimates for general functions satisfying certain direction aware matrix bounded difference inequalities, generalizing and strengthening earlier estimates by the first-named author and Polaczyk.

We also demonstrate a general decoupling inequality for a class of Banach-space valued quadratic forms in negatively associated random variables and combine it with the matrix Bernstein inequality to generalize results by Tropp, Chr\'etien and Darses, and Ruetz and Schnass, concerning the operator norm of a random submatrix of a deterministic matrix, drawn by uniform sampling without replacements or rejective sampling, to submatrices given by general Strong Rayleigh sampling schemes.

\medskip

\noindent Keywords: concentration of measure, random matrices, negative dependence

\medskip

\noindent  AMS Classification: 60E15, 60B20
\end{abstract}

\maketitle

\section{Introduction}

Concentration inequalities for sums of independent random matrices have found numerous applications in statistics, computer science, high-dimensional probability, functional analysis and quantum information theory. Given the great progress in this direction achieved in the last  years, it is difficult to give a full account of the literature, so we will mention here just a few landmark papers, establishing the most important tools of the theory: the work on the non-commutative Khintchine inequalities by Lust-Piquard and Pisier \cite{MR859804,MR1150376}, the paper \cite{MR1694526} by Rudelson, where these inequalities were used to derive  bounds on the accuracy of empirical approximation of covariance matrices, the article by Ahlswede and Winter establishing the first matrix Chernoff bounds \cite{MR1889969}, the subsequent work by Oliveira \cite{MR2653725} and finally the work \cite{MR2946459} by Tropp, strengthening and polishing previous estimates and providing a whole range of inequalities valid under various assumptions on the random matrices involved. We refer the reader to the last reference for a more detailed description of the history of the problem up to 2012. More recent developments involve connections with free probability and refinements of previous inequalities which do not outperform them in general, but allow for elimination of certain logarithmic factors in situations where one may expect that the problem is indeed well approximated by its free counterpart \cite{MR3768857,MR4635836,bandeira2024matrixconcentrationinequalitiesfree,MR4823211}.

When one drops the assumption of independence, the theory is so far less developed, however there are also several results, covering various type of assumptions concerning the dependence structure. One of the first results is the matrix Freedman inequality for martingales due to Tropp \cite{MR2802042}. Another lines of research establish bounds for matrices corresponding to mixing sequences, Markov chains or more specifically random walks on graphs \cite{MR3493552,MR3826320,MR4838296,vanwerde2023matrixconcentrationinequalitiesdependent}, concentration under matrix counterparts of classical functional inequalities \cite{MR4108222,MR4260507,MR4216521} and inequalities for matrix-valued functions of random vectors with values in the discrete cube $\{0,1\}^n$, satisfying some Dobrushin-type or negative dependence conditions \cite{MR4108222,MR3899605,MR4683375,kaufman2022scalar}.

This article belongs to the last group. We are interested in concentration inequalities for matrix valued functions on the discrete cube under non-product measures. Motivations for investigating concentration for such models come from combinatorics, physics, computer science and statistics, as dependent binary random variables arise in the analysis of random combinatorial structures (for instant random bases of matroids, in particular random spanning trees of the graph, \cite{DBLP:conf/stoc/FederM92,MR3899605}), discrete models of statistical physics (see, e.g., the monographs \cite{MR3752129,MR2108619}), randomized algorithms (see e.g., \cite{MR2547432}), and survey sampling, where they allow for selecting a sample from a population with prescribed inclusion probabilities (see, e.g., \cite{MR2225036,MR3000852,MR4010964,MR3619696}).

In subsequent sections we develop several inequalities working under different (but interrelated) types of assumptions. Since precise formulation of our theorems would require additional definitions, we postpone the statements to Section \ref{sec:results}, and here we just describe the nature of the estimates and its relations with previous results.

The inequalities we present are of the form
\begin{displaymath}
  \p(\|Z\| \ge t)\le 2d\exp\Big(-\min\Big(\frac{t^2}{a^2},\frac{t}{b}\Big)\Big),
\end{displaymath}
where $Z$ is a matrix-valued function on $\{0,1\}^n$, $\p(\cdot)$ is a probability on $\{0,1\}^n$, satisfying appropriate weak or negative dependence assumptions, and $a,b$ are parameters responsible for the Gaussian and exponential rate of tail decay, which hold respectively for \emph{small} and \emph{large} values of $t$. The right-hand side resembles the tail decay given by the classical Bernstein inequality for sums of independent random variables, therefore we call such class of estimates \emph{Bernstein-type inequalities}. The parameter $b$ is usually related to some $\ell_\infty$ bounds on the increments of the function $f$, while $a$ should be thought of as a variance proxy and be of a weaker, informally speaking, integrated form.

Our approach is inspired by the recent article \cite{kaufman2022scalar} by Kaufman, Kyng and Solda who were able to adapt to the dependent setting classical tools going back to \cite{MR1889969} and prove bounds for random variables of the form $\sum_{i=1}^n X_i A_i$ where $X_i$'s are binary random variables satisfying an appropriate Dobrushin-type condition (see Section \ref{sec:assumptions} below for all the definitions and assumptions), and $A_i$'s are nonnegative definite matrices. By refining their approach we obtain Bernstein-type inequalities for general (not neccessarily nonnegative definite) matrices which involve as a subgaussian parameter the quantity $\|\E \sum_{i=1}^n X_i A_i^2\|$, just as in the corresponding estimates in the independent setting (Theorems \ref{thm:Bernstein}, Corollary \ref{cor:Bernstein-non-hom}). Under stronger assumptions of Stochastic Covering Property or Strong Rayleigh Property, by combining the method of \cite{kaufman2022scalar} with the approach from the paper by the first named author and Polaczyk \cite{MR4683375}, we obtain a Bernstein type inequality for general matrix-valued functions satisfying a McDiarmid type increment conditions, answering a question which was left open in \cite{MR4683375} (Theorem \ref{thm:general-function-SCP}, Corollary \ref{cor:general-function-SRP}). Finally, we address the problem of random subsampling of deterministic matrices, and obtain norm estimates for random submatrices selected via random vectors with Strong Rayleigh Property (Theorem \ref{thm:matrix-sampling}). It is a generalization of the results by Tropp \cite{MR2379999}, Chr\'etien and Darses \cite{MR2929804} for uniform sampling without replacement, and more recent inequalities due to Ruetz and Schnass \cite{MR4296760} for rejective sampling, to a wider family of efficient sampling schemes. To this end we also develop a decoupling inequality for a class of Banach space-valued quadratic forms in negatively associated binary variables, which is of independent interest (Theorem \ref{thm:decoupling}).

The organization of the article is as follows. In Section \ref{sec:notation} we introduce the basic notation and discuss the notions of weak and negative dependence used in our assumptions. In Section \ref{sec:results} we formulate our main theorems and provide a comparison with earlier results. More specifically, Section \ref{sec:Bernstein} is devoted to Bernstein-type inequalities for $\ell_\infty$-independent random vectors, Section \ref{sec:SCP} to inequalities for more general functions of vectors with Stochastic Covering Property, and Section \ref{sec:decoupling} to decoupling inequalities and applications to the problem of sampling random submatrices. Section \ref{sec:proofs} contains proofs of all our theorems.

\section{Notation and preliminaries}\label{sec:notation}

\subsection{Basic notation}
We will use the notation $[n] = \{1,\ldots,n\}$. The cardinality of a set will be denoted by $|\cdot|$. For $x \in \{0,1\}^n$ by $\supp(x)$ we will denote the support of $x$, i.e., the set $\{i\in[n]\colon x_i =1\}$. For a set $I \subseteq [n]$, we write $I^c$ for $[n]\setminus I$.

By $\MS$ we will denote the space of symmetric $d\times d$ matrices with real coefficients. The identity matrix will be denoted by $\id$ (the size of the matrix will be always clear from the context).

By $e_1,\ldots,e_n$ we will denote the standard basis in $\R^n$. The standard Euclidean norm in $\R^n$  will be denoted by $|\cdot|$ (which should not lead to confusion with cardinality of a set) and the Euclidean unit sphere in $\R^n$ by $S^{n-1}$.
For $p \ge 1$, by $\ell_p^n$ we will denote the Banach space $(\R^n, \|\cdot\|_p)$ where $\|x\|_p = (|x_1|^p+\cdots+|x_n|^p)^{1/p}$. For an $m\times n$ matrix $A$ by $\|A\|_{\ell_p\to \ell_q}$ we will denote the operator norm of $A$ seen as an operator between $\ell_p^n$ and $\ell_q^n$, i.e., $\|A\|_{\ell_p\to \ell_q} = \sup_{\|x\|_p \le 1} \|Ax \|_{q}$. When $p,q = 2$, we will drop the subscript $\ell_2\to \ell_2$ and write simply $\|\cdot\|$. Thus $\|A\| = \|A\|_{\ell_2 \to \ell_2} = \sup_{x,y \in S^{n-1}} |x^T A y|$. We will write $\preceq$ for the positive-semidefinite order on symmetric matrices. For $x,y \in \{0,1\}^n$ by $x\oplus y$ we will denote the coordinate-wise sum of $x$ and $y$ modulo 2.

We say that a random vector $X = (X_1,\ldots,X_n)$ with values in $\{0,1\}^n$ is $k$-homogeneous if with probability one, $\sum_{i=1}^n X_i = k$. For a random vector $Y$, by $\mu_Y$ we will denote its law. If $\mu_Y = \mu$ we will also write $Y \sim \mu$. Similarly, if $X,Y$ have the same law, we will write $X \sim Y$.
For $I \subseteq [n]$ and a $\{0,1\}^n$-valued random vector $X = (X_1,\ldots,X_n)$ we set $X_I = (X_i)_{i \in I}$. For a random variable $X$, by $\sigma(X)$ we will denote the $\sigma$-field generated by $X$.

\subsection{Assumptions on random vectors}\label{sec:assumptions}

Let us now describe various notions of dependence between binary random variables, which we will use in the article. We will start with $\ell_\infty$-independence, which relies on bounding an appropriate operator norm of an interdependence matrix. Various versions of this notion has appeared in the literature starting from the seminal work by Dobrushin (see, e.g., \cite{MR1756011,MR3248197,MR4091094} for applications to concentration of measure phenomenon). In the context of matrix concentration inequalities the version we present below has been recently introduced by Kaufman, Kyng and Solda \cite{kaufman2022scalar}.

Let $\mu$ be a probability measure on $\{0,1\}^n$. Consider a random vector $X = (X_1,\ldots,X_n)\sim \mu$. For $\Lambda \subseteq [n]$, and $\sigma\in \{0,1\}^\Lambda$, consider a matrix $\Psi_\mu^{\Lambda,\sigma} = (\Psi_\mu^{\Lambda,\sigma}(i,j))_{i,j\in [n]}$ defined as
\begin{displaymath}
  \Psi_\mu^{\Lambda,\sigma}(i,j) = \p(X_j=1|X_i = 1, \forall_{\ell \in \Lambda} X_\ell = \sigma_\ell) -  \p(X_j=1|X_i = 0, \forall_{\ell \in \Lambda} X_\ell = \sigma_\ell)
\end{displaymath}
whenever $\p(X_i = 1, \forall_{\ell \in \Lambda} X_\ell = \sigma_\ell), \p(X_i = 0, \forall_{\ell \in \Lambda} X_\ell = \sigma_\ell) > 0$ and
$ \Psi_\mu^{\Lambda,\sigma}(i,j) = 0$ otherwise.

Following \cite{kaufman2022scalar} we will also consider a matrix $\mathcal{I}_\mu^\Lambda = (\mathcal{I}_\mu^\Lambda(i,j))_{i,j\in [n]}$, given by
\begin{displaymath}
  \mathcal{I}_\mu^\Lambda(i,j) = \p(X_j = 1|X_i = 1, \forall_{\ell \in \Lambda} X_\ell = 1) - \p(X_j = 1|\forall_{\ell \in \Lambda} X_\ell = 1),
\end{displaymath}
whenever $\p(X_i = 1, \forall_{\ell \in \Lambda} X_\ell = 1) > 0$, and $\mathcal{I}_\mu^\Lambda(i,j) = 0$ otherwise.

For a matrix $A = (a_{ij})_{i,j\in [n]}$, by $\|A\|_{\ell_\infty \to \ell_\infty}$ we will denote the norm of $A$ seen as an operator from the space $\ell_\infty^n$ to itself, i.e.,
\begin{displaymath}
  \|A\|_{\ell_\infty \to \ell_\infty} = \max_{1\le i \le n} \sum_{j=1}^n |a_{ij}|.
\end{displaymath}
\begin{defi}[$\ell_\infty$-independence] We will say that a probability measure $\mu$ on $\{0,1\}^n$ (equiv. a random vector $X$ with law $\mu$) is

\begin{itemize}
\item (one-sided) $\ell_\infty$-independent with parameter $D$ if
for every $\Lambda \subseteq [n]$,
\begin{displaymath}
  \|\mathcal{I}_\mu^{\Lambda}\|_{\ell_\infty \to \ell_\infty} \le D,
\end{displaymath}
\end{itemize}
and
\begin{itemize}
\item two-sided $\ell_\infty$-independent with parameter $D$ if
for every $\Lambda \subseteq [n]$ and $\sigma \in \{0,1\}^\Lambda$,
\begin{displaymath}
  \|\Psi_\mu^{\Lambda,\sigma}\|_{\ell_\infty \to \ell_\infty} \le D.
\end{displaymath}
\end{itemize}
\end{defi}

\begin{remark}\label{re:D-at-least-1} By considering $i = j$ we can see that if a random vector $X$ is two-sided $\ell_\infty$-independent and $X$ is not deterministic, then $D \ge 1$. It is not difficult to see that this is not true for one-sided $\ell_\infty$-independence, for instance a random vector with i.i.d. coordinates, taking values 1 with probability very close to 1 is $\ell_\infty$-independent with parameter $D \simeq 0$. If we however restrict our attention to $k$-homogeneous random vectors, then again, for non-deterministic vectors, we have $D \ge 1$ (see Lemma \ref{le:D-lower-bound} below).

Moreover the two-sided $\ell_\infty$-independence implies its one-sided version, but the reverse implication in generality does not hold (see \cite[Appendix C]{kaufman2022scalar}).
\end{remark}

In \cite{kaufman2022scalar} the authors provide many examples of random vectors satisfying the above definitions, in particular examples coming from the theory of negative dependence (see below) and Gibbs measures coming from statistical physics. We refer the reader to this article for the proofs and here we just mention that the examples include the monomer-dimer model, the Ising/Potts model, list colouring, stable distributions and distributions with the Stochastic Covering Property. The last example is the most important for us and we recall it in
Definition \ref{defi:SCP} below. Let us also remark that the conditions from the definitions of $\ell_\infty$-independence for $\Lambda = \emptyset$ are related to the notion of spectral independence \cite{MR4839668}. In particular, examples of measures satisfying $\ell_\infty$-conditions may be often obtained from examples satisfying spectral independence. For instance, following conditionally the arguments of \cite[Chapter 2, Example 5]{KuiKuiLiuPhd}, one can prove that if $X$ is homogeneous and for every $\Lambda \subseteq [n]$, and all $i,j \in \Lambda^c$, conditionally on the event $\{\forall_{\ell\in \Lambda} X_\ell = 1\}$, the variables $X_i,X_j$ are negatively correlated, then $X$ is one-sided $\ell_\infty$-independent with $D = 2$.

\vskip0.7cm

The next definitions are related to the theory of negative dependent random variables. The first one, we would like to introduce, is negative association. It was introduced in a more general context of real-valued variables in \cite{MR684886} and since then it has become one of the basic notions of the theory of negative dependence.

\begin{defi}[Negative association] Let $X$ be a random vector with values in $\R^n$. We will say that $X$ is negatively associated if for any $\emptyset \neq I \subsetneq [n]$ and (coordinate-wise) non-decreasing functions $f \colon \{0,1\}^I \to \R$ and $g\colon \{0,1\}^{I^c} \to \R$, we have
\begin{displaymath}
  \E f(X_I)g(X_{I^c}) \le \E f(X_I) \E g(X_{I^c}).
\end{displaymath}
\end{defi}

In some of our results we will work under an assumption stronger than negative association, namely the Strong Rayleigh Property.

\begin{defi}[Rayleigh Properties]
An $\{0,1\}^n$-valued random vector $X = (X_1,\ldots,X_n)$ has the Rayleigh property if its generating polynomial
\begin{displaymath}
  F_X(x_1,\ldots,x_n) = \E x_1^{X_1}\cdots x_n^{X_n}
\end{displaymath}
satisfies
\begin{displaymath}
  F(x)\frac{\partial^2}{\partial x_i\partial x_j}F(x) \le \frac{\partial F}{\partial x_i}(x)\frac{\partial F}{\partial x_j}(x)
\end{displaymath}
for all $i,j \in [n]$ and $x = (x_1,\ldots,x_n) \in [0,\infty)^n$.

If the above inequality holds for all $x \in \R^n$, then we say that $X$ has the Strong Rayleigh Property.
\end{defi}

The above definition was introduced by Borcea, Br\"and\'en and Liggett in \cite{MR2476782}. They proved that many distributions of interest satisfy the SRP. It is known that SRP implies negative association. The by now classical examples include  uniform measures on bases of balanced matroids (in particular random uniform spanning trees of a graph), determinantal measures, Bernoulli random variables conditioned on their sum, measures related to exclusion dynamics, determinantal measures.

Finally, let us recall the definition of Stochastic Covering Property (SCP). It was introduced by Pemantle and Peres \cite{MR3197973}, who showed that it is implied by the SRP, but there is no implication in the other direction. Recall that $e_1,\ldots, e_n$ is the standard basis in $\R^n$.

\begin{defi}[Stochastic covering property]\label{defi:SCP}
Let $x,y\in\{0,1\}$. We say that $x$ covers $y$, which we denote by $x \triangleright y$, if $x=y$ or $x=y+e_i$ for some $i\in [n]$.

Let $\mu$ be a probability measure on $\{0,1\}^n$ and let $X \sim \mu$. We will say that $\mu$ (equivalently $X$) satisfies the Stochastic Covering Property (abbrev. SCP) if for any $\Lambda \subset [n]$ and any $x,y\in\{0,1\}^n$ such that $\p(X_\Lambda=x_\Lambda),\p(X_\Lambda=y_\Lambda)>0$ and $x_\Lambda \triangleright y_\Lambda$, there exists a coupling $(Y,Q)$ between the conditional distributions $\p(X_{\Lambda^c} \in \cdot \,\vert\, X_\Lambda = y_\Lambda)$ and $\p(X_{\Lambda^c} \in \cdot \,\vert\, X_\Lambda = x_\Lambda)$ such that with probability one, $Y \triangleright Q$.
\end{defi}

Kaufman, Kyng and Solda proved that for $k$-homogeneous measures the SCP implies two-sided $\ell_\infty$-independence with parameter $D = 2$. 

\section{Main results}\label{sec:results}

\subsection{Matrix Bernstein inequality for $\ell_\infty$-independent random vectors}\label{sec:Bernstein}
In this section we will present results for linear combinations of binary random variables with matrix coefficients, corresponding to non-commutative Bernstein-type inequalities, obtained in the independent setting by Tropp \cite{MR2946459} (see also \cite{MR1889969,MR2653725} for important earlier contributions). We will work under the assumption of $\ell_\infty$-independence. In the special case of nonnegative definite matrices, the inequalities we obtain allow to recover the results by Kaufman, Kyng and Solda \cite{kaufman2022scalar}. We should, however, stress that our approach builds extensively on their ideas.

\begin{theorem}\label{thm:Bernstein}
Let $X = (X_1,\ldots,X_n)$ be an $\{0,1\}^n$-valued random vector and consider $A_1,\ldots,A_n \in \MS$. Define the random variable
\begin{displaymath}
  Z = \sum_{i=1}^n X_i A_i.
\end{displaymath}
Assume that $X$ is $k$-homogeneous and one-sided $\ell_\infty$-independent with parameter $D$. Then, for some absolute constant $C$ and all $t > 0$,

\begin{align}\label{eq:Bernstein}
  \p(\|Z - \E Z\| \ge t) \le 2d\exp\Big(-\frac{1}{CD^2}\min\Big(\frac{t^2}{\sigma^2},\frac{t}{R}\Big)\Big),
\end{align}
where $\sigma^2 = \|\E \sum_{i=1}^n X_i A_i^2\|$ and $R = \max_{i\le n} \|A_i\|$.
\end{theorem}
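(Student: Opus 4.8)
The plan is to run the matrix Laplace transform method and evaluate the resulting trace moment generating function by revealing the support of $X$ one coordinate at a time, in the spirit of \cite{kaufman2022scalar}, but carrying the argument out so that it produces the variance proxy $\sigma^2$ and accommodates signed coefficient matrices.

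\emph{Reduction.} Since $\|Z-\E Z\|=\max\bigl(\lambda_{\max}(Z-\E Z),\lambda_{\max}(\E Z-Z)\bigr)$ and replacing each $A_i$ by $-A_i$ leaves all the hypotheses, as well as $\sigma$ and $R$, unchanged, it is enough to bound $\p\bigl(\lambda_{\max}(Z-\E Z)\ge t\bigr)$ and double the estimate. By the matrix Laplace transform bound, for every $\theta>0$,
\[
  \p\bigl(\lambda_{\max}(Z-\E Z)\ge t\bigr)\ \le\ e^{-\theta t}\,\E\,\tr\exp\bigl(\theta(Z-\E Z)\bigr),
\]
so the whole problem reduces to an upper bound on the trace exponential on the right.

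\emph{Martingale decomposition along the support.} Write $S=\supp(X)$, so that $Z=\sum_{i\in S}A_i$ and $|S|=k$ almost surely, and reveal the elements of $S$ one at a time in a uniformly random order $i_1,\dots,i_k$; set $\mathcal F_t=\sigma(i_1,\dots,i_t)$ and $\Lambda_t=\{i_1,\dots,i_t\}$. Then $\E[Z\mid\mathcal F_t]=\sum_{s\le t}A_{i_s}+\sum_{j\notin\Lambda_t}\p(X_j=1\mid X_{\Lambda_t}=1)A_j$ interpolates between $\E Z$ at $t=0$ and $Z$ at $t=k$, so $Z-\E Z=\sum_{t=1}^k Y_t$ is a matrix martingale whose increments are
\[
  Y_t=\bigl(1-\p(X_{i_t}=1\mid X_{\Lambda_{t-1}}=1)\bigr)A_{i_t}+\sum_{j\notin\Lambda_{t-1}\cup\{i_t\}}\mathcal I_\mu^{\Lambda_{t-1}}(i_t,j)\,A_j,
\]
where the second term is read off directly from the interdependence matrix $\mathcal I_\mu^{\Lambda_{t-1}}$; since $X$ is one-sided $\ell_\infty$-independent and $D\ge1$ for $k$-homogeneous non-deterministic vectors, we get $\|Y_t\|\le (1+D)R\le 2DR$. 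One then peels the increments off one by one via Lieb's concavity theorem and conditional Jensen, bounding the conditional moment generating function of each increment through the spectral inequality $e^{\theta Y}\preceq \id+\theta Y+\tfrac{\theta^2}{2(1-\theta\|Y\|/3)}Y^2$: using $\E[Y_t\mid\mathcal F_{t-1}]=0$ this gives $\log\E[e^{\theta Y_t}\mid\mathcal F_{t-1}]\preceq \tfrac{\theta^2}{2(1-cDR\theta)}\,\E[Y_t^2\mid\mathcal F_{t-1}]$, and the whole scheme closes once one shows that the predictable quadratic variation satisfies
\[
  \Bigl\|\sum_{t=1}^k\E[Y_t^2\mid\mathcal F_{t-1}]\Bigr\|\ \le\ C\,D^2\,\sigma^2
\]
up to events of negligible probability. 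Feeding this into the iteration gives, for $0\le\theta\le 1/(CD^2R)$,
\[
  \E\,\tr\exp\bigl(\theta(Z-\E Z)\bigr)\ \le\ d\,\exp\!\Bigl(\frac{CD^2\sigma^2\theta^2}{1-CD^2R\theta}\Bigr),
\]
and optimizing $\theta$ in the Laplace bound — taking $\theta$ of order $\min\!\bigl(t/(D^2\sigma^2),\,1/(D^2R)\bigr)$ — yields \eqref{eq:Bernstein}.

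\emph{Main obstacle.} All the real difficulty sits in the bound on the predictable quadratic variation, and this is where the present setting departs from \cite{kaufman2022scalar}. Because the $A_i$ need not be nonnegative definite, one cannot discard lower-order terms by monotonicity of the trace exponential under $\preceq$; the second-order contributions $\theta^2Y_t^2$ must be retained and summed so as to recover exactly $\sigma^2=\|\sum_i\p(X_i=1)A_i^2\|$ rather than a cruder quantity such as $kR^2$, which forces one to exploit $k$-homogeneity (so that the conditional inclusion probabilities encountered along the revealing have the correct aggregate size), and to control the geometric accumulation of the interdependence corrections $\mathcal I_\mu^{\Lambda_{t-1}}$, which is what produces the factor $D^{-2}$ and the range $\theta\lesssim 1/(D^2R)$. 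Since only \emph{one-sided} $\ell_\infty$-independence is assumed, every conditioning used must be of the form $\{X_\Lambda=1\}$, which is precisely why the support-revealing order — rather than a fixed coordinate order — is essential; in practice the variance estimate has to be interwoven with the moment generating function recursion rather than established separately, and making this interplay work with the correct absolute constants is the crux of the proof.
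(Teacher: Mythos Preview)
Your proposal identifies the right ingredients --- the Laplace transform reduction, revealing the support in random order, and the fact that one-sided $\ell_\infty$-independence forces you to condition only on events of the form $\{X_\Lambda=1\}$ --- and correctly locates the difficulty in the quadratic-variation step. But it stops exactly where the proof becomes nontrivial. You assert that
\[
\Bigl\|\sum_{t=1}^k\E[Y_t^2\mid\mathcal F_{t-1}]\Bigr\|\ \le\ CD^2\sigma^2
\]
holds ``up to events of negligible probability'' and then concede in the final paragraph that ``the variance estimate has to be interwoven with the moment generating function recursion rather than established separately,'' without saying how. That interweaving is the entire content of the theorem. The predictable quadratic variation is a \emph{random} matrix; even after using $\ell_\infty$-independence conditionally you only get $\E[Y_t^2\mid\mathcal F_{t-1}]\preceq D^2\,\E[A_{i_t}^2\mid\mathcal F_{t-1}]$, and the sum of the right-hand sides over $t$ is again a random matrix whose operator norm you would in turn need to control. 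A Freedman-style argument with a stopping event for this secondary quantity leads either to a bootstrap or to a bound in terms of $\|\sum_i X_iA_i^2\|$ rather than $\sigma^2=\|\E\sum_i X_iA_i^2\|$.

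The paper avoids this circularity by a different device: it never bounds the random quadratic variation. Instead it proves by induction on $k$ that for every deterministic $H$,
\[
\E\,\tr\exp(H+\lambda Z)\ \le\ \tr\exp\Bigl(H+\lambda\,\E Z+K\lambda^2\,\E\sum_i X_iA_i^2\Bigr),
\]
carrying the \emph{deterministic} variance proxy inside the exponential from the outset. In the inductive step one conditions on the first revealed index $V$, applies the hypothesis to the $(k-1)$-homogeneous conditional law, and rewrites the resulting discrepancy as $\lambda Z_V+K\lambda^2\widehat Z_V-K\lambda^2 A_V^2$, where $Z_V=\E[Z\mid V]-\E Z$ and $\widehat Z_V$ is the analogous object for $\sum_i X_iA_i^2$. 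Golden--Thompson (not Lieb) separates this correction, and the induction closes once one shows
\[
\Bigl\|\E\exp\bigl(\lambda Z_V+K\lambda^2\widehat Z_V-K\lambda^2 A_V^2\bigr)\Bigr\|\ \le\ 1.
\]
The decisive input here is the \emph{unconditional} estimate $\E Z_V^2\preceq D^2\,\E A_V^2$, obtained by a transposed form of the $\ell_\infty$-independence bound (Lemma~\ref{le:av-max-independence}). The explicit subtracted term $-K\lambda^2 A_V^2$ is what replaces your predictable quadratic variation and makes the scheme close at each step without ever controlling a random variance; this mechanism is the missing idea in your outline.
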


\begin{remark} The quantity $\E \sum_{i=1}^n X_i A_i^2 = \sum_{i=1}^n p_iA_i^2$, where $p_i = \p(X_i=1)$. It can be thought of as a matrix variance proxy for $Z$. In fact since $\sum_{i=1}^n X_i = k = \E \sum_{i=1}^n X_i$, it can be replaced by $\E \sum_{i=1}^n X_i (A_i - A)^2$ for any deterministic matrix $A$ at the cost of replacing the constant $R$ by $R + \|A\|$. Choosing $A = \frac{1}{k}\sum_{i=1}^n p_i A_i$ we obtain
\begin{displaymath}
\E \sum_{i=1}^n X_i (A_i - A)^2 = k\Big(\frac{1}{k}\sum_{i=1}^n p_i A_i - \Big(\frac{1}{k}\sum_{i=1}^n p_iA_i\Big)^2\Big) \preceq \E \sum_{i=1}^n X_i A_i^2.
\end{displaymath}
Observe that in this case $\|A\| \le R$. One can also note that the left-hand side above equals to $k\Var(A_I)$, where $I$ is a random element of $[n]$, which conditionally on $X$ is drawn uniformly from $\supp(X) = \{i\colon X_i = 1\}$. Its operator norm can be significantly smaller than that of $\E \sum_{i=1}^n X_i A_i$, in particular it vanishes when all $A_i$'s are equal (in which case $Z$ is deterministic).
\end{remark}

\begin{remark} As seen from the proofs, presented in Section \ref{sec:proofs-Bernstein}, in the above theorem one can take $C = 35$. Also in subsequent inequalities one can obtain explicit constants from our proofs. Since they are most likely rather far from optimal, we do not specify them.
\end{remark}

\begin{remark}
We have stated our results for symmetric matrices, but the standard hermitization trick (known also as dilation) allows to obtain results for non-symmetric square matrices or rectangular matrices. Since this method is by now standard, we do not state explicitly the results, and refer to \cite[Section 2.6]{MR2946459} for details.
\end{remark}

Similarly as in \cite{kaufman2022scalar}, under the stronger assumption that $X$ is two-sided $\ell_\infty$-independent, we can drop the assumption of homogeneity.
\begin{cor}\label{cor:Bernstein-non-hom}
Let $X = (X_1,\ldots,X_n)$ be an $\{0,1\}^n$-valued random vector and consider $A_1,\ldots,A_n \in \MS$. Define the random variable
\begin{displaymath}
  Z = \sum_{i=1}^n X_i A_i.
\end{displaymath}
Assume that $X$ is two-sided $\ell_\infty$-independent with parameter $D$. Then, for some absolute constant $C$ and all $t > 0$,
\begin{displaymath}
  \p(\|Z - \E Z\| \ge t) \le 2d\exp\Big(-\frac{1}{CD^2}\min\Big(\frac{t^2}{\sigma^2},\frac{t}{R}\Big)\Big),
\end{displaymath}
where $\sigma^2 = \|\E \sum_{i=1}^n X_i A_i^2\|$ and $R = \max_{i\le n} \|A_i\|$.
\end{cor}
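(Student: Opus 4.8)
The plan is to deduce the non-homogeneous statement from Theorem \ref{thm:Bernstein} by the standard trick of embedding a non-homogeneous vector into a homogeneous one on a larger index set. First I would introduce $n$ auxiliary coordinates: consider the random vector $\widetilde X = (X_1,\ldots,X_n,\widetilde X_1,\ldots,\widetilde X_n)$ on $\{0,1\}^{2n}$, where $\widetilde X_i = 1 - X_i$. Then $\sum_{i=1}^{n}X_i + \sum_{i=1}^n \widetilde X_i = n$ almost surely, so $\widetilde X$ is $n$-homogeneous. Assigning the matrix coefficients $A_1,\ldots,A_n$ to the first $n$ coordinates and the zero matrix $\mathbf{0}$ to the auxiliary ones, we have $\sum_{i=1}^{2n}\widetilde X_i B_i = \sum_{i=1}^n X_i A_i = Z$, while $\sigma^2$ and $R$ are visibly unchanged by appending zero matrices. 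It therefore remains to check that $\widetilde X$ is one-sided $\ell_\infty$-independent with parameter $CD$ for an absolute constant $C$ (in fact one should be able to get parameter $2D$ or so).

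The main obstacle — and the only genuinely technical point — is this verification of $\ell_\infty$-independence for $\widetilde X$. Here is where the two-sided hypothesis on $X$ is used: conditioning events for $\widetilde X$ involve pinning both original coordinates $X_\ell$ and complemented coordinates $\widetilde X_\ell = 1-X_\ell$, and since $\{\widetilde X_\ell = 1\} = \{X_\ell = 0\}$, a conditioning set $\{\forall_{\ell\in\Lambda}\widetilde X_\ell = 1\}$ for a subset $\Lambda$ of the new index set $[2n]$ translates into an event of the form $\{X_\ell = \sigma_\ell \text{ for } \ell \in \Lambda'\}$ for an appropriate $\Lambda' \subseteq [n]$ and $\sigma \in \{0,1\}^{\Lambda'}$ — precisely the kind of event controlled by the matrices $\Psi_\mu^{\Lambda',\sigma}$ in the definition of two-sided $\ell_\infty$-independence. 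So I would proceed by fixing $\Lambda \subseteq [2n]$ and $i \in [2n]$, splitting into the cases $i \le n$ and $i > n$, and in each case expressing the row $\mathcal{I}_{\mu_{\widetilde X}}^{\Lambda}(i,\cdot)$ in terms of entries $\Psi_\mu^{\Lambda',\sigma}(i',j')$ of the original interdependence matrices, using the identities $\p(\widetilde X_j = 1\mid \cdot) = 1 - \p(X_j = 1\mid\cdot)$ and $\p(X_{j}=1\mid X_i=0,\ldots) - \p(X_j=1\mid\ldots) = -\,\p(X_i=1\mid\ldots)\,\Psi_\mu^{\ldots}(i,j)$ coming from the law of total probability. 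Summing absolute values over $j \in [2n]$ then reduces, up to a bounded factor, to the $\ell_\infty \to \ell_\infty$ norm of a single matrix $\Psi_\mu^{\Lambda',\sigma}$, which is at most $D$ by hypothesis.

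Once $\widetilde X$ is shown to be $n$-homogeneous and one-sided $\ell_\infty$-independent with parameter bounded by a constant multiple of $D$, Theorem \ref{thm:Bernstein} applied to $Z = \sum_{i=1}^{2n}\widetilde X_i B_i$ yields the claimed bound, with the constant $C$ absorbing the loss in the dependence parameter (squared) incurred in the reduction. I expect the homogenization and the bookkeeping of $\sigma^2$ and $R$ to be routine; the care needed is entirely in the case analysis translating conditioning events between the two index sets, and it is there that the distinction between the one-sided and two-sided definitions becomes essential — one-sided $\ell_\infty$-independence of $X$ would not suffice, since complementing a coordinate forces us to condition on $\{X_\ell = 0\}$, which is outside the scope of $\mathcal{I}_\mu^\Lambda$.
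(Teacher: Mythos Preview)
Your proposal is correct and follows essentially the same approach as the paper: homogenize $X$ to $\widetilde X=(X_1,\ldots,X_n,1-X_1,\ldots,1-X_n)\in\{0,1\}^{2n}$, assign zero matrices to the auxiliary coordinates so that $Z$, $\sigma^2$, and $R$ are unchanged, and apply Theorem~\ref{thm:Bernstein}. The only difference is that the paper simply cites Kaufman--Kyng--Solda for the fact that two-sided $\ell_\infty$-independence of $X$ with parameter $D$ implies one-sided $\ell_\infty$-independence of $\widetilde X$ with parameter $2D$, whereas you sketch a direct verification of this; your sketch is along the right lines and indeed yields a bound of the form $CD$ for an absolute constant.
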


In the special case, when the matrices $A_1,\ldots,A_n$ are nonnegative definite, the following, simplified bound may be sometimes sufficient. We will use it in particular to obtain bounds concerning sampling submatrices of a deterministic matrix (see Section \ref{sec:decoupling}).

\begin{cor}\label{cor:KKS-bound}
Under the assumptions of Theorem \ref{thm:Bernstein} or Corollary \ref{cor:Bernstein-non-hom} on the random vector $X$, for any nonnegative definite $A_1,\ldots,A_n$, the random variable $Z = \sum_{i=1}^n X_iA_i$ satisfies for all $t > 0$,
\begin{displaymath}
  \p(\|Z\| \ge \|\E Z\| + t) \le 2d\exp\Big(-\frac{1}{CD^2}\min\Big(\frac{t^2}{\|\E Z\|R},\frac{t}{R}\Big)\Big),
\end{displaymath}
where $C$ is an absolute constant and $R = \max_{i\le n}\|A_i\|$.
\end{cor}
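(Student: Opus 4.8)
The plan is to deduce the corollary directly from Theorem~\ref{thm:Bernstein} (or Corollary~\ref{cor:Bernstein-non-hom}, depending on which set of hypotheses on $X$ is assumed) by a short reduction, the only genuine ingredient being an elementary positive-semidefinite estimate for the variance proxy. First I would note that by the triangle inequality for the operator norm, $\|Z\| \le \|Z - \E Z\| + \|\E Z\|$, so that $\{\|Z\| \ge \|\E Z\| + t\} \subseteq \{\|Z - \E Z\| \ge t\}$ and hence $\p(\|Z\| \ge \|\E Z\| + t) \le \p(\|Z - \E Z\| \ge t)$.

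Next I would bound the quantity $\sigma^2 = \|\E \sum_{i=1}^n X_i A_i^2\|$ that enters the conclusion of Theorem~\ref{thm:Bernstein}. Since each $A_i$ is nonnegative definite, the spectral theorem gives $A_i^2 \preceq \|A_i\| A_i \preceq R A_i$ with $R = \max_{i\le n}\|A_i\|$. Multiplying by $X_i \ge 0$, summing, and taking expectations (the positive-semidefinite order being preserved by nonnegative combinations and by $\E$) yields $\E \sum_{i=1}^n X_i A_i^2 \preceq R\,\E\sum_{i=1}^n X_i A_i = R\,\E Z$. Both sides are nonnegative definite, so passing to operator norms gives $\sigma^2 \le R\,\|\E Z\|$.

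Finally, plugging $\sigma^2 \le R\,\|\E Z\|$ into \eqref{eq:Bernstein} and using monotonicity of $s \mapsto \min(t^2/s, t/R)$ in $s$, one gets $\min(t^2/\sigma^2, t/R) \ge \min\big(t^2/(R\|\E Z\|), t/R\big)$, and combining this with the first step produces exactly the claimed inequality, with the same absolute constant $C$ and the same $D$-dependence. I do not expect any real obstacle; the only point deserving a remark is the degenerate case $\|\E Z\| = 0$, in which $\E Z = 0$ forces (by nonnegativity of the $A_i$ and of $p_i = \p(X_i = 1)$) that $Z = 0$ almost surely, so $\p(\|Z\| \ge \|\E Z\| + t) = 0$ for $t > 0$, consistent with reading the right-hand side with the convention $t^2/0 = +\infty$.
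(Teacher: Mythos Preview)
Your argument is correct and is exactly the natural deduction: the paper does not spell out a proof of this corollary, treating it as immediate from Theorem~\ref{thm:Bernstein} (resp.\ Corollary~\ref{cor:Bernstein-non-hom}) via the same two observations you make, namely $\{\|Z\|\ge \|\E Z\|+t\}\subseteq\{\|Z-\E Z\|\ge t\}$ and, for nonnegative definite $A_i$, $A_i^2\preceq R A_i$ so that $\sigma^2\le R\|\E Z\|$. Your handling of the degenerate case $\|\E Z\|=0$ is also fine.
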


\begin{remark} Let us end this section with a brief comparison of our estimates with the inequalities by Kaufman, Kyng and Song \cite{kaufman2022scalar}, which were a direct source of inspiration for us.
They considered nonnegative definite matrices $A_i$, with $\|A_i\|\le 1$ and proved that for $\delta \in [0,1]$
\begin{align*}
  \p(\|Z\| \ge (1+\delta) \|\E Z\|) & \le d \exp\Big(-\frac{\delta^2 \|\E Z\|}{CD^2}\Big)\\
  \p(\mu_{\min}(Z) \le (1-\delta)\mu_{\min}(\E Z)) & \le d\exp\Big(-\frac{\delta^2 \mu_{\min}(\E Z)}{CD^2}\Big),
\end{align*}
where $\mu_{\min}(A)$ is the smallest eigenvalue of the matrix $A \in \MS$.

It is easy to see that their upper bound follows from Corollary \ref{cor:KKS-bound} (up to the value of the constant $C$). One should stress that this corollary could be also obtained from the proof in \cite{kaufman2022scalar}. On the other hand, the general inequality \eqref{eq:Bernstein} of Theorem \ref{thm:Bernstein} and Corollary \ref{cor:Bernstein-non-hom} requires a modification of the approach from \cite{kaufman2022scalar} and leads in general to stronger estimates. While the strength of both inequalities in the positive definite case considered in the CS applications discussed in \cite{kaufman2022scalar} is comparable, in general the parameter $\sigma^2$ involving matrices $A_i^2$ may be much smaller than $\|\E Z\|$, leading to stronger upper bounds.

As for the lower bound on the smallest eigenvalue value in the nonnegative definite case, inequality \eqref{eq:Bernstein} would yield
\begin{multline*}
  \p(\mu_{\min}(Z) \le (1-\delta)\mu_{\min}(\E Z)) \\
  \le 2d \exp\Big(-\frac{1}{CD^2}\min\Big(\frac{\delta^2 \mu_{min}(\E Z)^2}{\sigma^2},\delta\mu_{\min}(\E Z)\Big)\Big),
\end{multline*}
which is not directly comparable with the lower bound of Kaufman, Kyng and Solda. One can easily construct examples in which $\mu_{\min}(\E Z)$ is much larger than $\sigma^2$ (in which case our bound is better) and in which $\mu_{\min}(\E Z)$ is much smaller than $\sigma^2$ (then, the bound by Kaufman, Kyng and Solda outperforms ours).

Let us mention that the main inequality on the Laplace transform used in our proofs, i.e., formula \eqref{eq:trace-induction} is (up to the value of constants) a strengthening of the corresponding estimate in \cite[Lemma 5.2]{kaufman2022scalar}, so it also allows to obtain the lower bound from \cite{kaufman2022scalar}.
\end{remark}

\subsection{Concentration for Lipschitz functions of random vectors withthe  SCP}\label{sec:SCP}

In this section we will present concentration results for more general matrix-valued functions under an additional assumption that the random vector $X$ has the SCP. Recall the notation $\oplus$ for the coordinate-wise mod 2 addition of binary vectors and that $e_1,\ldots,e_n$ is the standard basis in $\R^n$.

\begin{theorem}\label{thm:general-function-SCP} Assume that $X$ is a $k$-homogeneous $\{0,1\}^n$-valued random vector, satisfying the SCP. Let $f \colon \{0,1\}^n \to \MS$, such that for some nonnegative definite matrices $A_i \in \MS$, and all $i \in [n]$,
\begin{align}\label{eq:increment-assumption}
  (f(x) - f(x\oplus e_i))^2 \preceq A_i^2.
\end{align}
Then for every $t \ge 0$,
\begin{displaymath}
  \p(\|f(X) - \E f(X)\| \ge t) \le 2d\exp\Big(-\frac{1}{C}\min\Big(\frac{t^2}{\sigma^2},\frac{t}{R}\Big)\Big),
\end{displaymath}
where $\sigma^2 = \|\E \sum_{i=1}^n X_i A_i^2\|$ and $R = \max_{i\le n} \|A_i\|$.
\end{theorem}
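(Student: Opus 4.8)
The plan is to reduce Theorem~\ref{thm:general-function-SCP} to the matrix Bernstein inequality of Theorem~\ref{thm:Bernstein} (via Corollary~\ref{cor:Bernstein-non-hom}, or directly, since SCP for $k$-homogeneous vectors gives two-sided $\ell_\infty$-independence with $D=2$ by the Kaufman--Kyng--Solda result recalled above) by a martingale-type decomposition of $f(X) - \E f(X)$ into increments that are matrix-controlled by the $A_i$. The essential point, following the strategy of \cite{MR4683375} combined with \cite{kaufman2022scalar}, is to exploit the SCP coupling to replace each martingale increment by a linear term of the form $\pm(X_i - p_i')A_i$ with a conditionally deterministic sign, up to a matrix error dominated by $A_i^2$, so that the Laplace transform bound \eqref{eq:trace-induction} underlying Theorem~\ref{thm:Bernstein} can be invoked.

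Concretely, I would reveal the coordinates one at a time along an (adaptively chosen) order and write $f(X) - \E f(X) = \sum_{j=1}^n \Delta_j$ with $\Delta_j = \E[f(X)\mid \mathcal F_j] - \E[f(X)\mid \mathcal F_{j-1}]$, where $\mathcal F_j$ is generated by the first $j$ revealed coordinates. Fixing the revealed prefix (a conditioning set $\Lambda$ with $x_\Lambda$ prescribed) and the next coordinate $i$, the SCP provides a coupling $(Y,Q)$ of the conditional law of $X_{\Lambda^c}$ given $X_i=0$ (after rescaling, $y$) and given $X_i=1$ ($x$) with $x_\Lambda \triangleright y_\Lambda$ and $Y\triangleright Q$ almost surely; since $Y$ and $Q$ differ in at most one coordinate, assumption \eqref{eq:increment-assumption} yields $(f(\cdot,Y) - f(\cdot,Q))^2 \preceq A_{\kappa}^2$ for the (random) coordinate $\kappa$ where they differ, hence a bound on $\E[f\mid X_i=1] - \E[f\mid X_i=0]$ in the semidefinite order by something like $(\E \sum_{\ell} \mathbf{1}_{\kappa = \ell} A_\ell)$, controlled in norm by $R$ and, crucially, whose "second moment" contribution along the whole reveal sequence telescopes into $\sigma^2 = \|\E\sum_i X_i A_i^2\|$. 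This lets me re-run the trace-Laplace-transform induction exactly as in the proof of Theorem~\ref{thm:Bernstein}: at each step $\log \tr \exp(\theta(S_{j-1} + \Delta_j))$ is bounded by $\log\tr\exp(\theta S_{j-1} + g(\theta)V_j)$ for a variance increment $V_j \succeq 0$ with $\sum_j V_j$ of norm $\le \sigma^2$ (up to constants) and $\|\Delta_j\| \lesssim R$, producing the stated Bernstein tail with an absolute constant $C$ (absorbing the $D=2$).

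The main obstacle I anticipate is making the coupling argument produce a clean \emph{semidefinite} increment bound rather than merely a scalar one: the SCP gives $Y \triangleright Q$ coordinatewise, so $f(x_\Lambda, Y) - f(x_\Lambda, Q) = f(z) - f(z \oplus e_\kappa)$ for a random $\kappa$, and \eqref{eq:increment-assumption} controls its square by $A_\kappa^2$, but to feed this into the matrix induction I need the conditional increment $\Delta_j$ and its square to be dominated, in the positive-semidefinite order, by a matrix built from the $A_\ell^2$ with the right expectations so that the telescoping gives $\sigma^2$ and not something larger. This is exactly the direction-aware refinement over \cite{MR4683375} (which only handled a uniform scalar Lipschitz bound): one must track that the coupling selects coordinate $\ell$ with (conditional) probability comparable to the inclusion structure, so that $\sum_j \E[A_{\kappa_j}^2 \mid \mathcal F_{j-1}]$ collapses to $\E\sum_i X_i A_i^2$. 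Once this bookkeeping is done, the remainder is a routine repetition of the Laplace-transform machinery of Section~\ref{sec:proofs-Bernstein}; I would also remark that dropping $k$-homogeneity in a corollary (Corollary~\ref{cor:general-function-SRP}) follows by the same argument under SRP, which is stable under conditioning on the sum, mirroring how Corollary~\ref{cor:Bernstein-non-hom} is derived from Theorem~\ref{thm:Bernstein}.
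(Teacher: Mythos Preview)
Your high-level intuition is right—SCP coupling, the increment bound \eqref{eq:increment-assumption}, and a trace--Laplace induction—but both the framing and the resolution of the obstacle you flag differ from the paper in ways that matter. The paper does \emph{not} reduce to Theorem~\ref{thm:Bernstein} nor run a martingale over $n$ revealed coordinates with a filtration $\mathcal F_j$. Instead it reruns the induction \eqref{eq:trace-induction} on $k$ directly for $f$: sample $V$ uniformly from $\supp(X)$, condition on $\{V=v\}$ (equivalently $\{X_v=1\}$), and apply the $(k-1)$-step hypothesis to the conditional law, which is again $k{-}1$-homogeneous SCP. The one-step input is a new estimate $\bigl\|\E\exp(\lambda Z_V+C\lambda^2\widehat Z_V-C\lambda^2A_V^2)\bigr\|\le 1$ (Lemma~\ref{le:Z_V-estimates-SCP}), with $Z_V=\E(f(X)\mid V)-\E f(X)$ and $\widehat Z_V=\E(\sum_i X_iA_i^2\mid V)-\E\sum_i X_iA_i^2$; Golden--Thompson then finishes exactly as before. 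Note also that the SCP coupling between $\p(X\in\cdot\mid X_v=1)$ and $\p(X\in\cdot\mid X_v=0)$ produces full vectors differing in \emph{two} coordinates (namely $v$ and a random $U_v$, with $Q^v=Y^v-e_v+e_{U_v}$), so the bound coming from \eqref{eq:increment-assumption} and Lemma~\ref{le:op-conv} is $(f(Y^v)-f(Q^v))^2\preceq 2(A_v^2+A_{U_v}^2)$, not a single $A_\kappa^2$.

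The bookkeeping you correctly isolate as the crux is resolved by a trick specific to the $V$-sampling, not to an arbitrary filtration: apply the \emph{same} conditional-expectation identity \eqref{eq:conditonal-expectation} to the linear function $g(x)=\sum_i x_iA_i^2$. On $\{V=v\}$ this gives $\widehat Z_V=\p(X_v=0)\,(A_v^2-\E A_{U_v}^2)$, and since $\E\widehat Z_V=0$ one obtains the identity
\[
\sum_v \p(V=v)\,\p(X_v=0)\,A_v^2 \;=\; \sum_v \p(V=v)\,\p(X_v=0)\,\E A_{U_v}^2,
\]
which is precisely what collapses the cross term $\E A_{U_v}^2$ back to $A_v^2$ and yields $\E Z_V^2\preceq 4\,\E A_V^2$. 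Your hoped-for collapse of $\sum_j\E[A_{\kappa_j}^2\mid\mathcal F_{j-1}]$ to $\E\sum_i X_iA_i^2$ is not evident in a coordinate-revealing martingale; the paper's mechanism works because the random index $U_v$ produced by the SCP coupling is tied, in expectation, to the same law that generates $V$. Finally, Corollary~\ref{cor:general-function-SRP} is not obtained by a conditioning-on-the-sum argument as you suggest, but by embedding an SRP vector as the first $n$ coordinates of a larger \emph{homogeneous} Rayleigh vector (via \cite[Theorem~4.2]{MR2476782}) and applying Theorem~\ref{thm:general-function-SCP} there.
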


\begin{remark}
The above theorem improves \cite[Theorem 2.8]{MR4683375}. First, it allows to replace the Strong Rayleigh Property assumed there, by the weaker notion of Stochastic Covering Property. Second, it eliminates spurious $\log(ek)$ factors in the estimate.
\end{remark}

\begin{remark} Clearly, Theorem \ref{thm:general-function-SCP} covers in particular the class of the form $f(x) = \sum_{i=1}^n x_iC_i$, considered in the previous section (with $A_i = \sqrt{C_i^2}$).
Another example of a function to which one can apply the above theorem is
\begin{displaymath}
  f(x) = \sum_{i=1}^{n-1} C_i x_ix_{i+1}.
\end{displaymath}
In this case, it is not difficult to see (e.g., using Lemma \ref{le:op-conv} below), that one can take $A_1 = \sqrt{C_i^2}$, $A_i = \sqrt{2(C_i^2 + C_{i+1}^2)}$ for $i = 2,\ldots,n-1$ and $A_{n} = \sqrt{C_n^2}$.
\end{remark}

In fact, for measures with the SRP, we can obtain the conclusion of the above theorem without assuming homogeneity. Indeed, in \cite[Theorem 4.2]{MR2476782} it is shown that if an $\{0,1\}^n$-valued random vector $X$ has the SRP, then its law is a projected homogeneous Rayleigh measure, i.e., $X$ can be embedded as the first $n$ coordinates of some $\{0,1\}^m$-valued homogeneous random vector with the Rayleigh Property. Moreover, by \cite[Proposition 2.1]{MR3197973}, every projected homogeneous Rayleigh measure (so in particular every homogeneous Rayleigh measure) satisfies the SCP. Therefore, the above theorem yields the following corollary.

\begin{cor}\label{cor:general-function-SRP}
The conclusion of Theorem \ref{thm:general-function-SCP} holds if $X$ is a (not necessarily homogeneous) $\{0,1\}^n$-valued random vector satisfying the SRP.
\end{cor}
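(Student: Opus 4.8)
\textbf{Proof proposal for Corollary \ref{cor:general-function-SRP}.}

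The plan is to reduce the non-homogeneous SRP case to the homogeneous SCP case already handled in Theorem \ref{thm:general-function-SCP}, exactly along the lines sketched in the paragraph preceding the corollary. By \cite[Theorem 4.2]{MR2476782}, if $X = (X_1,\ldots,X_n)$ has the SRP, then $\mu_X$ is a projected homogeneous Rayleigh measure: there is an integer $m \ge n$ and a $k$-homogeneous $\{0,1\}^m$-valued random vector $\widetilde X = (\widetilde X_1,\ldots,\widetilde X_m)$ with the Rayleigh Property such that $(\widetilde X_1,\ldots,\widetilde X_n) \sim X$. By \cite[Proposition 2.1]{MR3197973}, $\widetilde X$ satisfies the SCP. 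So the first step is simply to invoke these two results and fix such an extension $\widetilde X$.

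The second step is to lift the function $f$ and the increment bounds to the larger cube. Define $\widetilde f \colon \{0,1\}^m \to \MS$ by $\widetilde f(y) = f(y_1,\ldots,y_n)$, i.e., $\widetilde f$ depends only on the first $n$ coordinates. For $i \in [n]$ set $\widetilde A_i = A_i$, and for $i \in \{n+1,\ldots,m\}$ set $\widetilde A_i = 0$. Then for every $y \in \{0,1\}^m$ and $i \in [n]$ we have $\widetilde f(y) - \widetilde f(y\oplus e_i) = f(y_{[n]}) - f(y_{[n]}\oplus e_i)$, so \eqref{eq:increment-assumption} gives $(\widetilde f(y) - \widetilde f(y\oplus e_i))^2 \preceq \widetilde A_i^2$; for $i > n$ flipping the $i$-th coordinate does not change $\widetilde f$, so the increment is zero and the bound holds trivially with $\widetilde A_i = 0$. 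Thus $\widetilde f$ and $\widetilde A_1,\ldots,\widetilde A_m$ satisfy the hypotheses of Theorem \ref{thm:general-function-SCP} for the $k$-homogeneous vector $\widetilde X$ with the SCP.

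The third step is to apply Theorem \ref{thm:general-function-SCP} to $\widetilde X$ and $\widetilde f$ and then translate the parameters back. Since $\widetilde f(\widetilde X) = f(X)$ in distribution, we get $\|f(X) - \E f(X)\| \sim \|\widetilde f(\widetilde X) - \E \widetilde f(\widetilde X)\|$, so the tail bound transfers verbatim. It remains to check that the parameters match: $\widetilde R = \max_{i\le m}\|\widetilde A_i\| = \max_{i \le n}\|A_i\| = R$ because the extra matrices vanish, and
\begin{displaymath}
  \widetilde \sigma^2 = \Big\|\E \sum_{i=1}^m \widetilde X_i \widetilde A_i^2\Big\| = \Big\|\E \sum_{i=1}^n \widetilde X_i A_i^2\Big\| = \Big\|\sum_{i=1}^n \p(\widetilde X_i = 1) A_i^2\Big\| = \Big\|\sum_{i=1}^n \p(X_i = 1)A_i^2\Big\| = \sigma^2,
\end{displaymath}
again since the terms with $i > n$ contribute zero and $(\widetilde X_1,\ldots,\widetilde X_n)\sim X$. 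Substituting $\widetilde\sigma^2 = \sigma^2$ and $\widetilde R = R$ into the conclusion of Theorem \ref{thm:general-function-SCP} yields exactly the desired inequality, which finishes the proof.

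There is essentially no analytic obstacle here; the corollary is a soft consequence of the structural theorem \cite[Theorem 4.2]{MR2476782} combined with \cite[Proposition 2.1]{MR3197973}. The only point requiring a moment's care is the bookkeeping in the second and third steps — making sure the lifted increment condition is genuinely satisfied (it is, because the padded matrices are zero and $\widetilde f$ ignores the new coordinates) and that the variance proxy and the $\ell_\infty$-type constant are preserved under the embedding. I expect the referee-facing subtlety, if any, to be purely that the cited embedding is an embedding of laws, not of the random variables themselves, so one works throughout with distributional identities; this is harmless for a statement about the law of $\|f(X) - \E f(X)\|$.
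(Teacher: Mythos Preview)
Your proposal is correct and follows exactly the approach the paper uses: the paper's proof is the paragraph immediately preceding the corollary, which invokes \cite[Theorem 4.2]{MR2476782} to embed $X$ as the first $n$ coordinates of a homogeneous Rayleigh vector, then \cite[Proposition 2.1]{MR3197973} to get the SCP, and finally appeals to Theorem \ref{thm:general-function-SCP}. Your write-up simply spells out the bookkeeping (lifting $f$, padding the $A_i$'s with zeros, and checking that $\sigma^2$ and $R$ are unchanged) that the paper leaves implicit.
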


\begin{remark} The above corollary applied in dimension one generalizes (up to constants) \cite[Theorem 3.2]{MR3197973} by Pemantle and Peres, which states that for $X$ with the SRP and a real-valued function $f\colon \{0,1\}^n \to \R$, 1-Lipschitz with respect to the Hamming distance, and $t > 0$,
\begin{displaymath}
  \p(|f(X) - \E f(X)| \ge t) \le 5\exp\Big(-\frac{t^2}{16(2\E N + t)}\Big),
\end{displaymath}
where $N = \sum_{i=1}^n X_i$.

\vskip0.7cm

In \cite{kaufman2022scalar} Kaufman, Kyng and Solda ask whether under some weak dependence assumptions it is possible to obtain an inequality of the type
\begin{align}\label{eq:KKF-question}
  |f(X) - \E f(X)| \le C\sqrt{\E f(X) \log(1/\delta)}
\end{align}
with probability at least $1-\delta$ for all nonnegative functions which are 1-Lipschitz with respect to the Hamming distance.

We would like to argue here that in general such an inequality cannot hold for all $\delta \in (0,1)$ with universal $C$. This is witnessed by the independent case, when $X_i$ are i.i.d. with $\p(X_i=1) = 1/n$. Indeed, in this case for $f(x) = x_1+\ldots+x_n$, the random variable $f(X)$ converges weakly to the Poisson distribution with parameter 1, which does not have a subgaussian tail. More precisely, $\E f(X) = 1$, and for $t \in \N$, some constant $K > 0$ arbitrary $c > 0$ and $n,t$ large, we get
\begin{displaymath}
  \p(|f(X) - \E f(X)| \ge t) \le \p(f(X) \ge 1 + t) \ge \frac{1}{2(1+ t)!} e^{-1} \ge e^{-K t\log t} \gg e^{-ct^2},
\end{displaymath}
so taking $t = \sqrt{C\log(1/\delta)}$ we can see that for small $\delta$, \eqref{eq:KKF-question} does not hold. Still, in this example, the subgaussian bound holds for large values of $\delta$ (small values of $t$). Theorem \ref{thm:general-function-SCP} provides a subgaussian bound for general functions and small $t$, but with the expectation of $f(X)$ replaced by $\sigma^2\le \E N$.
\end{remark}

\subsection{A decoupling inequality with application to random submatrices}\label{sec:decoupling}

In this section we first discuss a general decoupling inequality for a class of quadratic forms in negatively associated binary variables, which goes beyond the matrix setting and is formulated in terms of arbitrary Banach spaces. Next, we go back to random matrices and consider the problem of bounding from above the norm of a random submatrix of a deterministic matrix selected with a sampling scheme with the Strong Rayleigh Property.

\begin{theorem}\label{thm:decoupling}
Assume that $X = (X_1,\ldots,X_n)$ is a $k$-homogeneous, $\{0,1\}^n$-valued random vector with negatively associated coordinates. Let $(E,\|\cdot\|)$ be a Banach space and assume that $(c_{ij})_{1\le i,j \le n}$ is a matrix with coefficients in $E$, such that $c_{ii} = 0$ for all $i \in [n]$.   Moreover, assume that the function $f\colon \{0,1\}^n\times \{0,1\}^n \to \R$, given by
\begin{displaymath}
    f(x,y) = \Big\| \sum_{i,j=1}^n c_{ij} x_iy_j\Big\|
\end{displaymath}
is coordinate-wise non-decreasing. There exists a universal constant $C$, such that for all $ t>0$,
\begin{displaymath}
    \p\Big(\Big\|\sum_{i,j=1}^n c_{ij}X_iX_j\Big\| \ge t\Big)
    \le C\p\Big(\Big\|\sum_{i,j=1}^n c_{ij}X_iY_j\Big\| \ge t/C\Big),
\end{displaymath}
where $Y$ is an independent copy of $X$.
\end{theorem}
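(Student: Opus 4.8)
\medskip

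The plan is to mimic the classical decoupling argument for quadratic forms (as in the Giné--Zinn / de la Peña--Montgomery-Smith scheme) but to replace the independence-based steps by the negative association of $X$ and the monotonicity of $f$. Write $N = \sum_{i=1}^n X_i = k$ a.s. The first step is a randomized selection of coordinates: let $\delta = (\delta_1,\ldots,\delta_n)$ be i.i.d.\ symmetric Bernoulli variables (i.e.\ $\p(\delta_i=1)=\p(\delta_i=0)=1/2$), independent of everything else, and put $I = \{i : \delta_i = 1\}$. Condition on $X$; since the diagonal terms vanish, for each fixed pair $(i,j)$ with $i\neq j$ one has $\p(i\in I,\, j\in I^c \mid X) = 1/4$, so that
\begin{displaymath}
  \E_\delta\Big[\sum_{i\in I,\, j\in I^c} c_{ij}X_iX_j \,\Big|\, X\Big] = \frac14 \sum_{i,j=1}^n c_{ij}X_iX_j.
\end{displaymath}
Hence by Jensen's inequality (applied to the convex function $\|\cdot\|$ on $E$) there exists, for every realization of $X$, a choice of $I$ with
\begin{displaymath}
  \Big\|\sum_{i\in I,\, j\in I^c} c_{ij}X_iX_j\Big\| \ge \frac14\Big\|\sum_{i,j=1}^n c_{ij}X_iX_j\Big\|,
\end{displaymath}
and a standard averaging argument upgrades this to a statement with probabilities: $\p\big(\|\sum_{i,j}c_{ij}X_iX_j\|\ge t\big) \le 4\,\p\big(\|\sum_{i\in I, j\in I^c}c_{ij}X_iX_j\|\ge t/4\big)$, with the probability on the right over $(X,\delta)$.

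\medskip

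The crucial second step is to replace, inside the set $I^c$, the variables $(X_j)_{j\in I^c}$ by the corresponding coordinates of an independent copy $Y$, while keeping $(X_i)_{i\in I}$. Here I would condition on $\delta$ (equivalently on $I$) and use the negative association of $X$ together with the monotonicity of $f$. Fix $I$; consider the vector $u=(X_i)_{i\in I}$ and $v=(X_j)_{j\in I^c}$. Because $f(x,y)=\|\sum c_{ij}x_iy_j\|$ is coordinate-wise non-decreasing and the sum over $i\in I$, $j\in I^c$ only involves these coordinates (with nonnegative ``weights'' $x_i,y_j$), the map $(u,v)\mapsto \|\sum_{i\in I, j\in I^c} c_{ij}u_iv_j\|$ is coordinate-wise non-decreasing in $(u,v)$ — note the diagonal vanishing is what lets us ignore interactions within $I$ or within $I^c$. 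Negative association of $X=(X_i)_{i\in I}\cup (X_j)_{j\in I^c}$ then gives, for any non-decreasing $\phi$ on the product, a comparison between $\E\phi(u,v)$ and the product structure; applied to $\phi = \1\{f(u,v)\ge s\}$ (which is non-decreasing, hence can be written as a supremum/limit of products of non-decreasing indicators, or directly by the fact that NA is preserved under taking monotone coordinates and that $u$ and $v$ live on complementary index sets) we obtain
\begin{displaymath}
  \p\Big(\Big\|\sum_{i\in I, j\in I^c} c_{ij}X_iX_j\Big\|\ge s \,\Big|\, I\Big) \le \p\Big(\Big\|\sum_{i\in I, j\in I^c} c_{ij}X_iY_j\Big\|\ge s \,\Big|\, I\Big),
\end{displaymath}
where $Y$ is an independent copy of $X$. (This is the step where I would be most careful: one must check that the relevant event is an increasing event in the two complementary blocks of coordinates, so that the single NA inequality $\E fg\le \E f\,\E g$ applies, and then integrate out $Y_{I}$ and $Y_{I^c}$ appropriately; the $k$-homogeneity of $X$ is used only implicitly, in that NA and the definition of the model are stated for such vectors and to make sure $Y$ is a genuine independent copy on the same probability space.) Averaging over $I$ keeps the factor $4$.

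\medskip

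The third step removes the restriction to $i\in I$, $j\in I^c$ and returns to the full double sum with $X$ on one side and $Y$ on the other. Having fixed $Y$, the quantity $\|\sum_{i\in I,j\in I^c} c_{ij}X_iY_j\|$ is obtained from the ``target'' $\|\sum_{i,j} c_{ij}X_iY_j\|$ by zeroing out some terms via $\delta$; running the conditional-expectation/Jensen argument of Step 1 in reverse — now with the $c_{ij}Y_j$ playing the role of fixed vectors and $X_i\delta_i$, $X_j(1-\delta_j)$ the selectors — I would show that for the randomly chosen $I$,
\begin{displaymath}
  \E_\delta\Big[\sum_{i\in I, j\in I^c} c_{ij}X_iY_j \,\Big|\, X,Y\Big] = \frac14\sum_{i,j=1}^n c_{ij}X_iY_j - (\text{correction from } i=j),
\end{displaymath}
and since $c_{ii}=0$ the correction vanishes; a further Jensen/convexity step (contraction) then yields $\p\big(\|\sum_{i\in I,j\in I^c}c_{ij}X_iY_j\|\ge t/4\big)\ge c\,\p\big(\|\sum_{i,j}c_{ij}X_iY_j\|\ge t/C'\big)$ for absolute constants, where one standard way to pass from the conditional expectation being a contraction of the full sum to a tail comparison is via a symmetrization/contraction principle for the Rademacher-like selectors $\delta$, or more elementarily via Markov/Paley--Zygmund applied after the Jensen bound. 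Collecting the three steps and tracking constants gives the claimed inequality with a universal $C$.

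\medskip

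The main obstacle I anticipate is Step 2: making the use of negative association fully rigorous. Negative association as defined in the paper is a statement about expectations of products of non-decreasing functions of complementary blocks, not directly about replacing a block by an independent copy; one must argue that conditionally on $I$, for the increasing functional $g_s(x) = \1\{\|\sum_{i\in I,j\in I^c} c_{ij}x_ix_j\| \ge s\}$, decomposed along the split $[n] = I \sqcup I^c$, the NA inequality upgrades to a stochastic-domination statement after introducing the independent copy. A clean way is: condition further on $(X_i)_{i\in I} = a$; then $x\mapsto \|\sum_{i\in I, j\in I^c} c_{ij}a_i x_j\|$ is a coordinate-wise non-decreasing function of $(x_j)_{j\in I^c}$, and the conditional law of $(X_j)_{j\in I^c}$ given $(X_i)_{i\in I}$ — or rather a suitable comparison between this conditional law and the unconditional law $\mu_{X_{I^c}}$ — is governed by negative association; integrating against an independent $Y_{I^c}\sim \mu_{X_{I^c}}$ and then reinstating the randomness of $a$ (again using NA between $X_I$ and $X_{I^c}$) produces the independent-copy bound. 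One should double-check that no extra logarithmic or dimension-dependent factors creep in here and that the argument does not secretly require the SRP rather than mere negative association.
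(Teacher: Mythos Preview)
Your Step~2 is where the argument breaks, and the missing idea is exactly what the paper supplies. With $\delta_i$ taken i.i.d.\ Bernoulli$(1/2)$ independent of $X$, conditioning on $I=J$ gives no information about $X$, so the event $\{\|\sum_{i\in J,\,j\in J^c}c_{ij}X_iX_j\|\ge s\}$ remains a joint increasing functional of \emph{both} $X_J$ and $X_{J^c}$. Negative association only controls expectations of \emph{products} $g(X_J)h(X_{J^c})$ of increasing functions on complementary blocks; it does not imply $\p(G(X_J,X_{J^c})=1)\le \p(G(\widetilde X_J,\widetilde X_{J^c})=1)$ for a general increasing $G$ and independent $\widetilde X_J,\widetilde X_{J^c}$ with the right marginals (already the $1$-homogeneous vector on two points with the increasing event $\{x_1=1\text{ or }x_2=1\}$ violates such a comparison). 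Your refinement---condition further on $X_J=a$---does not help: the indicator $\1\{X_J=a\}$ is increasing only when $a$ is the all-ones vector, so NA gives no comparison between the conditional and unconditional laws of $X_{J^c}$ for generic $a$. The paper's device is to set $\p(\delta_i=1\mid X)=\tfrac12 X_i$, so that $I\subseteq\supp(X)$ and $\{I=J\}$ coincides (on $\sigma(X)$) with $\{X_j=1\ \forall j\in J\}$. The functional then depends on $X_{J^c}$ alone, and NA is applied in its basic form to the product of $g=\prod_{j\in J}X_j$ (increasing in $X_J$) and $h=\1\{\|\sum_{i\in J,\,j\in J^c}c_{ij}X_j\|\ge s\}$ (increasing in $X_{J^c}$ by the monotonicity hypothesis on $f$), yielding $\p(h=1\mid \forall_{j\in J}X_j=1)\le \p(h=1)$.

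Your Step~1 also has a gap: from $\E_\delta\|\text{partial}\|\ge\tfrac14\|\text{full}\|$ one cannot get the tail bound $\p(\|\text{full}\|\ge t)\le 4\,\p(\|\text{partial}\|\ge t/4)$ by ``averaging''; what is needed is a uniform lower bound on $\p_\delta(\|\text{partial}\|\ge c\,\E_\delta\|\text{partial}\|)$, i.e.\ a reverse H\"older inequality. The paper gets this by observing that, conditionally on $X$, the partial sum is a degree-two tetrahedral polynomial in the Rademachers $\varepsilon_i=2\delta_i-1$, $i\in\supp(X)$, invoking Rademacher hypercontractivity to obtain $(\E_\delta Z^2)^{1/2}\le K\,\E_\delta Z$, and then applying Paley--Zygmund. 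Your Step~3, by contrast, is easier than you suggest: coordinate-wise monotonicity of $f$ gives $\|\sum_{i\in I,\,j\in I^c}c_{ij}X_iY_j\|\le\|\sum_{i,j}c_{ij}X_iY_j\|$ pointwise, with no contraction or Paley--Zygmund needed.
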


The examples of functions $f$ for which the monotonicity property holds are for instance operator norms of the form
\begin{displaymath}
\Big\|\sum_{1 \le i\neq j \le n} a_{ij} e_ie_j^T x_i y_j\Big\|_{E_1 \to E_2},
\end{displaymath}
where $E_i = (\R^n,\|\cdot\|_i)$, $i=1,2$, are Banach spaces for which the standard basis $(e_i)_{i\in [n]}$ is unconditional (for instance the classical $\ell_p^n$-spaces). Other examples include quadratic forms in nonnegative definite random matrices $c_{ij}$ with $\|\cdot\| = \|\cdot\|_{\ell_2\to \ell_2}$ or Rademacher averages, i.e., functions of the form
\begin{displaymath}
  f(x,y) = \E_{\varepsilon,\varepsilon'} \Big\|\sum_{i,j=1}^n c_{ij}x_iy_j\varepsilon_i \varepsilon_{j}'\Big\|,
\end{displaymath}
where $\varepsilon,\varepsilon'$ are independent Rademacher sequences (the monotonicity for binary $x,y$ follows from Jensen's inequality, in fact the monotonicity in positive $x,y$ also holds by the contraction principle for Rademacher averages, see, e.g., \cite[Theorem 4.4]{MR1102015}). Here $c_{ij}$'s are coefficients from an arbitrary Banach space.

\vskip0.7cm

As we have already mentioned, by \cite[Theorem 4.2]{MR2476782}, every measure with the SRP is a projected homogeneous Rayleigh measure. On the other hand, by \cite[Theorem 4.10]{MR2476782}, projected homogeneous Rayleigh measures are negatively associated. Thus, the above theorem gives the following corollary.

\begin{cor}\label{cor:decoupling-Rayleigh}
The conclusion of Theorem \ref{thm:decoupling} holds if $X$ is a (not necessarily homogeneous) $\{0,1\}^n$-valued random vector satisfying the SRP.
\end{cor}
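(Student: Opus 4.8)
The plan is to reduce Corollary~\ref{cor:decoupling-Rayleigh} to Theorem~\ref{thm:decoupling} by lifting $X$ to a homogeneous random vector on a larger cube. Since $X$ has the SRP, \cite[Theorem 4.2]{MR2476782} tells us that its law is a projected homogeneous Rayleigh measure: there exist $m \ge n$ and a $k$-homogeneous $\{0,1\}^m$-valued random vector $\tilde X = (\tilde X_1,\ldots,\tilde X_m)$ with the Rayleigh Property whose first $n$ coordinates $(\tilde X_1,\ldots,\tilde X_n)$ have the same joint law as $X$. By \cite[Theorem 4.10]{MR2476782} every projected homogeneous Rayleigh measure is negatively associated, and in particular so is the law of the homogeneous lift $\tilde X$ itself (it is trivially its own projection); hence $\tilde X$ satisfies all the hypotheses of Theorem~\ref{thm:decoupling}.

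Next I would transport the data to the larger cube. Set $\tilde c_{ij} = c_{ij}$ for $i,j \in [n]$ and $\tilde c_{ij} = 0$ otherwise, so that $\tilde c_{ii} = 0$ for all $i \in [m]$, and define $\tilde f \colon \{0,1\}^m \times \{0,1\}^m \to \R$ by $\tilde f(x,y) = \|\sum_{i,j=1}^m \tilde c_{ij} x_iy_j\|$. Then $\tilde f(x,y)$ depends only on $(x_1,\ldots,x_n)$ and $(y_1,\ldots,y_n)$ and agrees there with $f$; since a function that is constant in a given coordinate is in particular non-decreasing in it, the coordinate-wise monotonicity of $f$ transfers to $\tilde f$. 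Applying Theorem~\ref{thm:decoupling} to $\tilde X$, the array $(\tilde c_{ij})$, and $\tilde f$, with $\tilde Y$ an independent copy of $\tilde X$, we obtain for every $t > 0$
\begin{displaymath}
  \p\Big(\Big\|\sum_{i,j=1}^m \tilde c_{ij}\tilde X_i\tilde X_j\Big\| \ge t\Big) \le C\,\p\Big(\Big\|\sum_{i,j=1}^m \tilde c_{ij}\tilde X_i\tilde Y_j\Big\| \ge t/C\Big).
\end{displaymath}

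Finally, by the definition of $\tilde c$ and the distributional identification of $(\tilde X_1,\ldots,\tilde X_n)$ with $X$, the left-hand side equals $\p(\|\sum_{i,j=1}^n c_{ij}X_iX_j\| \ge t)$, while on the right-hand side $\sum_{i,j=1}^m \tilde c_{ij}\tilde X_i\tilde Y_j = \sum_{i,j=1}^n c_{ij}X_iY_j$ with $Y = (\tilde Y_1,\ldots,\tilde Y_n)$ an independent copy of $X$; substituting yields exactly the asserted inequality. I do not anticipate a substantive obstacle here: the argument is essentially bookkeeping, and the only points demanding a modicum of care are the verification that the lifted function $\tilde f$ remains coordinate-wise non-decreasing and the observation that \cite[Theorem 4.10]{MR2476782} supplies negative association for the full homogeneous vector $\tilde X$, not merely for the original projection.
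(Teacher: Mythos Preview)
Your proposal is correct and matches the paper's own argument: the paper likewise invokes \cite[Theorem 4.2]{MR2476782} to embed $X$ into a homogeneous Rayleigh vector and \cite[Theorem 4.10]{MR2476782} to obtain negative association, then appeals to Theorem~\ref{thm:decoupling}. You have simply made explicit the routine bookkeeping (extending the coefficients by zeros, checking monotonicity of $\tilde f$, and projecting the independent copy) that the paper leaves to the reader.
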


We remark that we do not know if one can drop the homogeneity assumption in the case of arbitrary negatively associated random variables.

\vskip0.7cm

As an application of the above corollary and Bernstein type inequalities for $\ell_\infty$-dependent measures from Section \ref{sec:Bernstein}, we obtain the following theorem concerning the operator norm of a matrix restricted to a random set of rows and columns, given by the support of a random vector with the SRP. Before we formulate it, let us recall the notation $\|A\|_{\ell_p\to \ell_q}$ for the operator norm of an $m\times n$ matrix seen as an operator from $\ell_p^n$ to $\ell_q^m$. Observe that $\|A\|_{\ell_1\to \ell_2}$ is the maximum Euclidean length of a column of $A$, $\|A\|_{\ell_2\to \ell_\infty}$ the maximum Euclidean length of a row, while $\|A\|_{\ell_1\to \ell_\infty}$ is just the maximum (in absolute value) entry of $A$.

\begin{theorem}\label{thm:matrix-sampling}
Let $X = (X_1,\ldots,X_d)$ be a $\{0,1\}^d$-valued random vector satisfying the SRP.  Cosider a deterministic matrix $H = (h_{ij})_{i,j=1}^d$ with zero diagonal and a random diagonal matrix $\Lambda_X  = Diag(X_1,\ldots,X_d)$. Let $H_X = \Lambda_X H \Lambda_X$ and let $P = Diag(\p(X_1=1),\ldots,\p(X_d=1))$.
There exist universal constants $C,c > 0$, such that for $t > C\|\sqrt{P}H\sqrt{P}\|$,
\begin{align}\label{eq:submatrix}
  \p(\|H_X\| \ge t) \le 2d\exp\Big(-c\min\Big(\frac{t^2}{\|\sqrt{P}H\|_{\ell_1\to \ell_2}^2},\frac{t^2}{\|H\sqrt{P}\|_{\ell_2\to \ell_\infty}^2},\frac{t}{\|H\|_{\ell_1\to\ell_\infty}}\Big)\Big).
\end{align}
\end{theorem}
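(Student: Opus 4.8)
The plan is to write $H_X = \Lambda_X H \Lambda_X$ as a quadratic form $\sum_{i\ne j} h_{ij}(e_ie_j^T) X_iX_j$ in the binary vector $X$, and to reduce its norm to a decoupled, conditionally-linear quantity via Corollary \ref{cor:decoupling-Rayleigh}. Concretely, apply Theorem \ref{thm:decoupling}/Corollary \ref{cor:decoupling-Rayleigh} with $E$ the space of $d\times d$ matrices under $\|\cdot\|_{\ell_2\to\ell_2}$ and coefficients $c_{ij} = h_{ij}e_ie_j^T$ (zero diagonal by assumption); the function $f(x,y) = \|\sum_{i\ne j} h_{ij}x_iy_j e_ie_j^T\|_{\ell_2\to\ell_2}$ is coordinate-wise nondecreasing since $\|\cdot\|_{\ell_2\to\ell_2}$ is a matrix norm monotone under entrywise masking of a matrix with a fixed sign pattern — or, more safely, pass first to a Rademacher average $\E_{\varepsilon,\varepsilon'}\|\sum h_{ij}x_iy_j\varepsilon_i\varepsilon_j' e_ie_j^T\|$, which the excerpt already notes is monotone, and which is comparable to $f$ up to absolute constants by the non-commutative Khintchine inequality for the spaces $E$ (this is the one place I would double-check, and it may force an extra $\sqrt{\log d}$ unless one argues more carefully; alternatively one simply checks monotonicity of $\|M\|$ under the substitution $M \mapsto \Lambda_x M \Lambda_y$ directly, which does hold because $\|\Lambda_x M \Lambda_y\| \le \|\Lambda_{x'} M \Lambda_{y'}\|$ whenever $x\le x'$, $y\le y'$, as $\Lambda_x$ is a contraction and $\Lambda_x M\Lambda_y = \Lambda_x\Lambda_{x'} M \Lambda_{y'}\Lambda_y$). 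After decoupling we are left with bounding $\|\Lambda_X H \Lambda_Y\|$ where $Y$ is an independent copy of $X$.

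The second step is to condition on $Y$. Given $Y$, the matrix $M_Y := H\Lambda_Y$ is deterministic, and $\Lambda_X M_Y = \sum_{i} X_i (e_i e_i^T M_Y) = \sum_i X_i R_i$, where $R_i = e_ie_i^T M_Y$ is the rank-one matrix whose only nonzero row is the $i$-th row of $H\Lambda_Y$. This is exactly a linear combination of binary variables with matrix coefficients, so Theorem \ref{thm:Bernstein}/Corollary \ref{cor:Bernstein-non-hom} applies to $X$ (which has the SRP, hence two-sided $\ell_\infty$-independence with $D = 2$, or one-sided with $D=2$ in the homogeneous case — invoke the SRP $\Rightarrow$ SCP $\Rightarrow$ $\ell_\infty$-independence chain recalled in the excerpt). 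Because $\Lambda_X M_Y$ is not symmetric I would instead apply the Bernstein bound to the symmetric quantity via the dilation trick (Remark on hermitization), or equivalently bound $\|\Lambda_X H\Lambda_Y\|^2 = \|\Lambda_X H \Lambda_Y \Lambda_Y H^T \Lambda_X\|$ — but the cleanest route is to note $\|\Lambda_X H \Lambda_Y\| \le \|\Lambda_X H\Lambda_Y\|$ and handle the two diagonal matrices in two stages: first Corollary \ref{cor:KKS-bound}-type control of $\sum_i X_i R_i$ with nonnegative-definite $R_iR_i^T$, then repeat over $Y$. The relevant parameters are $\sigma^2 = \|\E_X \sum_i X_i R_iR_i^T\| = \|\sum_i p_i R_iR_i^T\|$ and $R = \max_i \|R_i\|$; one computes $\sum_i p_i R_iR_i^T = \sum_i p_i e_ie_i^T M_Y M_Y^T e_ie_i^T$, a diagonal matrix whose $i$-th entry is $p_i\|(\text{row } i \text{ of } H\Lambda_Y)\|_2^2$, and $\|R_i\| = \|(\text{row } i\text{ of }H\Lambda_Y)\|_2$.

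The third step is to unwind these $Y$-dependent parameters back into the three deterministic quantities in \eqref{eq:submatrix}. After the first (conditional-on-$Y$) application we are left controlling, in $Y$, a quantity of the form $\max_i \sqrt{\sum_j X_j h_{ij}^2}$ and $\|\sum_i p_i (\text{row}_i H\Lambda_Y)(\text{row}_i H\Lambda_Y)^T\|$; each of these is again a linear functional / operator norm of a linear combination of the $Y_j$, so a second application of Corollary \ref{cor:KKS-bound} (plus a union bound over $i \in [d]$ for the $\max_i$ term, costing the factor $d$ in front) reduces everything to expectations: $\E_Y \sum_j Y_j h_{ij}^2 = \sum_j p_j h_{ij}^2 = \|\text{row}_i(H\sqrt P)\|_2^2 \le \|H\sqrt P\|_{\ell_2\to\ell_\infty}^2$; $\E_X\sum_i p_i(\cdot) = \|\sqrt P H \sqrt P\|^2$-type terms controlling the Gaussian regime; and the $\ell_\infty$ term $\|H\|_{\ell_1\to\ell_\infty} = \max_{ij}|h_{ij}|$ governing the exponential tail. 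The threshold $t > C\|\sqrt P H\sqrt P\|$ is precisely what is needed so that, after decoupling and two rounds of Bernstein, the "expected norm" baseline $\|\E\|$ appearing in Corollary \ref{cor:KKS-bound} (which here is $\asymp \|\sqrt P H\sqrt P\|$) is absorbed, leaving the clean $\min$ of three terms.

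The main obstacle I expect is bookkeeping in the two nested applications of the Bernstein inequality: each application naturally produces a baseline term $\|\E Z\|$ (Corollary \ref{cor:KKS-bound}) rather than a pure subgaussian tail, and one must check that iterating does not generate uncontrolled cross terms or an extra logarithmic factor, and that the three parameters that survive are exactly $\|\sqrt P H\|_{\ell_1\to\ell_2}$, $\|H\sqrt P\|_{\ell_2\to\ell_\infty}$, and $\|H\|_{\ell_1\to\ell_\infty}$ — in particular correctly matching the asymmetric roles of rows versus columns, which is why the first factor has only one $\sqrt P$ inside the $\ell_1\to\ell_2$ norm. A secondary, more technical point is the rigorous verification of the monotonicity hypothesis of Theorem \ref{thm:decoupling} for the operator norm; as noted above I would resolve it by the direct contraction argument $\|\Lambda_x M\Lambda_y\|\le\|\Lambda_{x'}M\Lambda_{y'}\|$ for $x\le x'$, $y\le y'$, avoiding any appeal to Khintchine.
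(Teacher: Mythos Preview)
Your proposal is correct and matches the paper's proof essentially step for step: decouple via Corollary~\ref{cor:decoupling-Rayleigh} (with monotonicity verified exactly by your contraction argument $\|\Lambda_x M\Lambda_y\|\le\|\Lambda_{x'}M\Lambda_{y'}\|$), square to write $\|\Lambda_X H\Lambda_Y\|^2$ as the norm of a sum $\sum_i Y_i W_iW_i^T$ of nonnegative-definite matrices, apply Corollary~\ref{cor:KKS-bound} conditionally, and then control the two random parameters ($\|\E_Y Z\|$ and $\max_i|W_i|^2$) by two further applications of Corollary~\ref{cor:KKS-bound}, the latter with a union bound over $i\le d$, combining everything on the event $t\gtrsim\|\sqrt{P}H\sqrt{P}\|$. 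The only differences are cosmetic: the paper conditions on $X$ first rather than on $Y$, and goes directly to the squared norm (your second listed option) rather than dilating.
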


Results concerning random restrictions of deterministic matrices for a subset of rows and columns of fixed cardinality selected by the uniform sampling without replacement or a subset obtained by i.i.d. binary variables, appeared in the literature in the context of local theory of Banach spaces and the Kadison--Singer problem (see the famous papers \cite{MR890420,MR1124564} by Bourgain and Tzafriri as well as the article \cite{MR2379999} by Tropp, which simplifies some of the proofs), signal processing and statistics \cite{MR2419702,MR2543688,MR2929804,MR4296760}. Theorems resembling Theorem \ref{thm:matrix-sampling} in the case of uniform sampling without replacement appeared in the work by Tropp \cite{MR2379999,MR2419702} (in the language of moments) and in \cite{MR2929804} by Chr\'etien and Darses.

In the recent article \cite{MR4296760}, Ruetz and Schnass consider special vectors with the SRP property, i.e., vectors obtained by conditioning independent Bernoulli variables to have a fixed sum $k$. More precisely, if $Y_1,\ldots,Y_n$ are independent random variables with $\p(Y_i = 1) = 1 - \p(Y_i = 0) = \pi_i$, where $\sum_{i=1}^n \pi_i = k$, then the vector they consider has the law
\begin{displaymath}
  \p(X \in \cdot) = \p\Big(Y \in \cdot \Big|\sum_{i=1}^n Y_i = k\Big).
\end{displaymath}
It is well known that $X$ satisfies the SRP. For this class of random variables Ruetz and Schnass obtain the estimate \eqref{eq:submatrix} (with explicit constants $C,c$) and with the matrix $P$ replaced by $\Pi = Diag(\pi_1,\ldots,\pi_n)$. Such a sampling scheme is known in the literature as rejective sampling or conditional Poisson sampling (see, e.g., \cite{MR2225036,MR4010964}).

The motivation for considering such random restrictions comes from signal processing and the analysis of statistical models in which the signal is represented as a linear combination of randomly chosen atoms from some dictionary. The authors of \cite{MR4296760} argue that when modelling data appearing in applications, choosing a model with non-equal probabilities of inclusion of particular atoms may give better results than the classical approach when the atoms are chosen via uniform sampling with replacement. We refer to their work for examples of applications of their estimates for random restrictions to various sparse approximation algorithms or construction of sensing dictionaries. In principle one can perform similar analysis for general measures satisfying \eqref{eq:submatrix}, this problem is however beyond the scope of this article.

Our Theorem \ref{thm:matrix-sampling} generalizes the estimate by Ruetz and Schnass to arbitrary (not necessarily homogeneous) vectors with the SRP. Moreover, even for the conditioned Bernoulli case, replacing the matrix $\Pi$ by $P$ may be in some cases beneficial. In general, $p_i \le 2\pi_i$  for all $i$ (see \cite[Lemma 3.5]{MR4296760}) and it is not difficult to construct examples in which most of the  entries of $P$ are much smaller than the corresponding entries of $\Pi$. One could also argue that $P$, defined in terms of inclusion probabilities of atoms is a more natural parameter than $\Pi$.

On the other hand, the result in \cite{MR4296760} gives explicit constants, while ours is expressed in terms of absolute constants. We remark that by tracking down the constants in the proofs of all our inequalities, one could obtain explicit expressions. However, since the constants obtained this way would still be rather large, tracking their value would obscure the main arguments, and we do not pursue specific applications of Theorem \ref{thm:matrix-sampling}, we prefer not to specify them.

\section{Proofs}\label{sec:proofs}

\subsection{Preliminary lemmas}
Let us start with two standard linear-algebraic lemmas. Elementary proofs of both of them can be found in \cite{kaufman2022scalar}.
\begin{lemma}\label{le:op-conv} For any $A,B\in \MS$,
\begin{displaymath}
  -A^2 - B^2 \preceq AB+BA \preceq A^2 + B^2
\end{displaymath}
and
\begin{displaymath}
  (A+B)^2 \preceq 2A^2 + 2B^2.
\end{displaymath}
\end{lemma}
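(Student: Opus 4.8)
These are elementary facts about the positive-semidefinite order; I will prove both by completing the square.

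For the first pair of inequalities, consider the symmetric matrices $A - B$ and $A + B$. Since the square of any symmetric matrix is positive semidefinite, we have $(A-B)^2 \succeq 0$, i.e. $A^2 - AB - BA + B^2 \succeq 0$, which rearranges to $AB + BA \preceq A^2 + B^2$. Applying the same reasoning to $(A+B)^2 \succeq 0$ gives $A^2 + AB + BA + B^2 \succeq 0$, hence $-(A^2 + B^2) \preceq AB + BA$. Together these two give the two-sided bound
\begin{displaymath}
  -A^2 - B^2 \preceq AB + BA \preceq A^2 + B^2.
\end{displaymath}

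For the second inequality, simply expand $(A+B)^2 = A^2 + (AB + BA) + B^2$ and substitute the upper bound just established for the cross term: $(A+B)^2 \preceq A^2 + (A^2 + B^2) + B^2 = 2A^2 + 2B^2$. Alternatively one can observe directly that $2A^2 + 2B^2 - (A+B)^2 = (A-B)^2 \succeq 0$, which is perhaps the cleanest route and makes the first computation unnecessary as a prerequisite.

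There is essentially no obstacle here; the only thing to be careful about is that $A$ and $B$ need not commute, so $(A\pm B)^2$ must be expanded keeping both orders $AB$ and $BA$, and one should not write $2AB$ for the cross term. Since the statement asserts these are standard and refers to \cite{kaufman2022scalar} for proofs, I would keep the write-up to the two-line square-completion argument above.
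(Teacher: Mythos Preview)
Your proof is correct and is the standard completing-the-square argument. The paper does not give its own proof of this lemma at all---it simply states that elementary proofs can be found in \cite{kaufman2022scalar}---so there is nothing to compare against; your write-up is exactly the kind of two-line justification one would expect.
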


\begin{lemma}\label{le:matrix-exp}
If $A,B \in \MS$, $A \preceq \id$ and $0 \preceq B$, then
\begin{displaymath}
  \exp(A-B) \preceq \id + A - B + 2A^   2+2B^2.
\end{displaymath}
\end{lemma}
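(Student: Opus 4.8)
\textbf{Proof proposal for Lemma \ref{le:matrix-exp}.}

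The plan is to reduce the matrix inequality to a scalar inequality about eigenvalues by exploiting the fact that $A$ and $B$ do not commute in general, so a direct ``diagonalize simultaneously'' argument is unavailable. Instead I would first observe that it suffices to prove the weaker-looking statement that for every symmetric $M$ with $M \preceq \id$ one has
\begin{displaymath}
  \exp(M) \preceq \id + M + 2M^2,
\end{displaymath}
i.e.\ the scalar inequality $e^x \le 1 + x + 2x^2$ for all $x \le 1$. This last inequality is elementary: on $(-\infty,1]$ the function $g(x) = 1 + x + 2x^2 - e^x$ satisfies $g(0)=0$, $g'(0)=0$, $g(1) = 4 - e > 0$, and one checks $g$ is nonnegative throughout by a short calculus argument (examining $g''(x) = 4 - e^x \ge 4 - e > 0$ on $(-\infty,1]$, so $g$ is convex there, with its minimum at $x = 0$ where $g=0$). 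Since a symmetric matrix $M \preceq \id$ has all eigenvalues $\le 1$, applying the scalar inequality to each eigenvalue (via the spectral theorem, functional calculus respects the order for functions satisfying the scalar bound on the spectrum) gives the matrix inequality $\exp(M) \preceq \id + M + 2M^2$.

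Next I would apply this with $M = A - B$. The hypotheses give $A \preceq \id$ and $B \succeq 0$, hence $M = A - B \preceq A \preceq \id$, so the displayed bound applies and yields
\begin{displaymath}
  \exp(A - B) \preceq \id + (A - B) + 2(A - B)^2.
\end{displaymath}
It remains to bound $(A-B)^2$ in the positive-semidefinite order by $2A^2 + 2B^2$, which is precisely the second inequality of Lemma \ref{le:op-conv} applied to $A$ and $-B$ (since $(-B)^2 = B^2$). Combining, $\exp(A-B) \preceq \id + A - B + 2(2A^2 + 2B^2)$; but this gives the constant $4$ rather than $2$, so to obtain the stated form I would instead expand $(A-B)^2 = A^2 - (AB + BA) + B^2$ and use the first inequality of Lemma \ref{le:op-conv}, namely $-(AB+BA) \preceq A^2 + B^2$, to get $(A-B)^2 \preceq 2A^2 + 2B^2$ directly with no extra factor. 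Substituting, $\exp(A-B) \preceq \id + A - B + 4A^2 + 4B^2$, which is weaker than claimed; so I should be careful here and instead apply the sharper scalar bound $e^x \le 1 + x + x^2/2 \cdot(\text{something})$ is not available for $x$ near $1$. The cleanest route giving the constant $2$ exactly is to verify the scalar inequality $e^x \le 1 + x + x^2$ fails at $x=1$ ($e = 2.718 > 3$? no, $e < 3$, so it holds), i.e.\ $e^x \le 1 + x + x^2$ for $x \le 1$, which then gives $\exp(M) \preceq \id + M + M^2$ and hence $\exp(A-B)\preceq \id + A - B + (A-B)^2 \preceq \id + A - B + 2A^2 + 2B^2$ using $-(AB+BA)\preceq A^2+B^2$ from Lemma \ref{le:op-conv}.

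The main obstacle, and the only real subtlety, is the passage from a scalar exponential estimate to the matrix ordering: one cannot write $\exp(A-B)$ in terms of $\exp(A)$ and $\exp(-B)$ because $A$ and $B$ need not commute, so all the work must be done with the single matrix $M = A-B$, and the bookkeeping of constants in the scalar inequality $e^x \le 1 + x + x^2$ on $(-\infty,1]$ must be done carefully so that the eventual split of $(A-B)^2$ via Lemma \ref{le:op-conv} lands exactly on the constant $2$. Once the correct scalar inequality is pinned down, the rest is a one-line application of the spectral theorem and Lemma \ref{le:op-conv}.
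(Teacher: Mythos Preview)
Your final approach is correct: set $M = A - B \preceq \id$, use the scalar bound $e^x \le 1 + x + x^2$ for $x \le 1$ together with the spectral theorem to obtain $\exp(M) \preceq \id + M + M^2$, and then bound $(A-B)^2 = A^2 - (AB+BA) + B^2 \preceq 2A^2 + 2B^2$ via the first inequality of Lemma~\ref{le:op-conv}. The paper does not give its own proof of this lemma; it simply refers to \cite{kaufman2022scalar}, and your argument is the natural elementary one.

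One point to tighten: you carefully verify $e^x \le 1 + x + 2x^2$ on $(-\infty,1]$ by convexity ($g'' = 4 - e^x > 0$ there), but when you switch to the sharper inequality $e^x \le 1 + x + x^2$ that same convexity argument no longer applies, since $h(x) = 1 + x + x^2 - e^x$ has $h''(x) = 2 - e^x$, which changes sign at $x = \ln 2 < 1$. The inequality is nonetheless true on $(-\infty,1]$: $h'(x) = 1 + 2x - e^x$ satisfies $h'(0)=0$, is increasing on $(-\infty,\ln 2]$ and decreasing on $[\ln 2,\infty)$, with $h'(1) = 3 - e > 0$; hence $h' \le 0$ on $(-\infty,0]$ and $h' \ge 0$ on $[0,1]$, so $h \ge h(0) = 0$ throughout $(-\infty,1]$. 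With that verification in place, the proof is complete. (The meandering in the middle of your write-up, trying the constant $2$ first and getting a $4$, should simply be deleted.)
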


Let us also recall the classical Golden--Thompson inequality \cite{MR189691,MR189688}.

\begin{theorem}
For any $A,B \in \MS$,
\begin{displaymath}
  \tr e^{A+B} \le \tr e^{A}e^{B}.
\end{displaymath}
\end{theorem}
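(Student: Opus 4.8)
The final displayed statement in the excerpt is the classical Golden--Thompson inequality, which is quoted as a known theorem (with references \cite{MR189691,MR189688}) rather than proved in the paper. So rather than reconstruct the authors' proof (there is none — they cite it), I describe how I would prove it from scratch if asked.

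\textbf{Approach: complex interpolation / the three-lines lemma.} The cleanest route is via Stein's interpolation or, more elementarily, Hirschman's refinement of the Hadamard three-lines theorem. The plan is to fix $A,B\in\MS$ and consider, for $z$ in the closed strip $S=\{z:0\le \mathrm{Re}\,z\le 1\}$, the entire-on-the-interior, bounded-and-continuous-on-$S$ function
\begin{displaymath}
  F(z) = \tr\big(e^{zA}\,e^{zB}\,e^{(1-z)A}\,e^{(1-z)B}\big)^{\!}.
\end{displaymath}
Wait — that is not quite the right auxiliary function; the standard choice is
\begin{displaymath}
  F(z) = \tr\, e^{zA}\, e^{zB}\, e^{(1-z)A/1}\,,
\end{displaymath}
so let me instead use the genuinely standard one: set $G(z)=\tr\big(e^{A}\big)^{?}$. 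The correct classical auxiliary function (Lieb--Thirring / Epstein style) is
\begin{displaymath}
  \Phi(z) = \tr\Big( e^{zH_0}\, e^{zH_1}\, e^{zH_0}\, \cdots \Big),
\end{displaymath}
but for the two-operator Golden--Thompson the standard interpolation function is simply $\Phi(z)=\tr\, e^{A}\,$ raised appropriately. To avoid muddle: I would use $\Phi(z) = \tr\big(e^{zA/2}e^{zB}e^{zA/2}\big)$ is wrong too — the right one is
\begin{displaymath}
  \Phi(z) = \tr\, e^{zA}\,e^{zB},\qquad z\in S,
\end{displaymath}
\emph{after} first reducing, by a limiting/tensor-power argument, to what we actually need. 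Concretely the plan has three steps. \textbf{Step 1 (Lie--Trotter reduction).} Write $e^{A+B}=\lim_{m\to\infty}\big(e^{A/m}e^{B/m}\big)^m$ and note $\tr$ is continuous, so it suffices to show $\tr\big(e^{A/m}e^{B/m}\big)^m\le \tr\, e^A e^B$ for every $m\ge 1$. \textbf{Step 2 (the key monotonicity).} Prove by induction on $m$ that for positive definite $P,Q$ and every $m\ge1$, $\tr\,(PQ)^{2m}\le \tr\,(P^2Q^2)^m$; equivalently $\tr\,(e^{X}e^{Y})^{2m}\le\tr\,(e^{2X}e^{2Y})^m$ for symmetric $X,Y$. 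Iterating this doubling inequality from $m$ up to a power of two, together with Step 1, yields Golden--Thompson. \textbf{Step 3 (proof of the doubling inequality).} Show $\tr\,(PQ)^{2m}\le\tr\,\big((PQ)^2\big)^m$ isn't automatic; use instead the Araki--Lieb--Thirring inequality $\tr\,(P^{r}Q^{r})^{s}\le \tr\,(PQ)^{rs}$ for $r\ge1$... which is itself the hard content.

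\textbf{Honest assessment of the obstacle.} The essential difficulty — and the reason Golden--Thompson is a genuine theorem rather than a one-line computation — is that $\tr\,e^{A+B}$ cannot be manipulated by the ordinary scalar exponential identity, and one must exploit cyclicity of the trace together with either (i) the operator-convexity of $t\mapsto t^p$ for $p\in[-1,0]$ and $t\mapsto -t^p$ for $p\in(0,1]$ (Epstein's approach, showing $z\mapsto\tr(e^{zA/2}e^{zB}e^{zA/2})$ is... ), (ii) complex interpolation via the three-lines theorem applied to an entire matrix-valued function whose boundary values on $\mathrm{Re}\,z=0$ are unitary-times-controlled, or (iii) the Lie product formula plus the Araki--Lieb--Thirring trace inequality. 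Route (iii) is the shortest to state: given $\|e^{rX/2}e^{rY}e^{rX/2}\|_s^s = \tr\,(e^{rX/2}e^{rY}e^{rX/2})^s$ is nondecreasing in... no. The genuinely shortest rigorous path is the three-lines argument on
\begin{displaymath}
  \Phi(z)=\tr\,\big(e^{zA}\,e^{zB}\,e^{(1-z)A}\,e^{(1-z)B}\big),
\end{displaymath}
for which $\Phi$ is entire, $|\Phi(iy)|\le\tr\,e^{A}e^{B}$ by Cauchy--Schwarz for the Hilbert--Schmidt inner product (the $iy$-part factors are unitary), $|\Phi(1+iy)|\le\tr\,e^Ae^B$ symmetrically, and one checks boundedness on the strip; the three-lines theorem then gives $\Phi(1/2)\le\tr\,e^Ae^B$, while $\Phi(1/2)=\tr\,(e^{A/2}e^{B/2})^2$. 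Iterating the substitution $A\mapsto A/2$, $B\mapsto B/2$ and combining with Lie--Trotter closes the proof. So: the main obstacle is the correct choice of the interpolation function $\Phi$ and the verification that its imaginary-axis boundary values are controlled by $\tr\,e^Ae^B$ via Hilbert--Schmidt Cauchy--Schwarz — once that is in place, the three-lines lemma plus a routine Lie--Trotter limit finish it, with no further matrix analysis required.
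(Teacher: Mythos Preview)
You are right that the paper does not prove Golden--Thompson: it is stated as a classical result with references and used as a black box in the proofs of Lemmas \ref{le:Bernstein-main-lemma} and \ref{le:SCP-main-lemma}. So there is no ``paper's proof'' to compare against, and your opening remark is the correct diagnosis.

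Your sketch, after several false starts, settles on the standard strategy: prove the doubling inequality $\tr\big(e^{A/2}e^{B/2}\big)^2 \le \tr\,e^A e^B$, iterate it along the dyadic sequence, and pass to the limit via the Lie--Trotter formula. That route is correct. However, the specific interpolation function you propose, $\Phi(z)=\tr\big(e^{zA}e^{zB}e^{(1-z)A}e^{(1-z)B}\big)$, does not have the clean boundary control you claim: on $\mathrm{Re}\,z=0$ one gets $\tr\big(U V e^A U^\ast e^B V^\ast\big)$ with unitaries $U,V$, and a direct Hilbert--Schmidt Cauchy--Schwarz split does not obviously bound this by $\tr\,e^A e^B$ (try it --- one of the two factors picks up an extra $e^B$). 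The doubling inequality has a much simpler proof that bypasses complex interpolation entirely: for positive definite $X,Y$ one computes
\begin{displaymath}
  2\tr(X^2Y^2) - 2\tr(XYXY) = \tr\big([X,Y]^T[X,Y]\big) \ge 0,
\end{displaymath}
using that the commutator of symmetric matrices is skew-symmetric. Applying this with $X=e^{A/2^m}$, $Y=e^{B/2^m}$ and iterating gives $\tr\big(e^{A/2^m}e^{B/2^m}\big)^{2^m}\le \tr\,e^Ae^B$ for all $m$, and Lie--Trotter finishes. This is the argument you were circling; the three-lines machinery is unnecessary here.
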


The next lemma is a part of a more general observation made in \cite[Lemma 4.5]{kaufman2022scalar}. After rearranging the terms and an application of the Bayes rule, it is a quick consequence of the definition of $\ell_\infty$-independence.  For notational simplicity in the lemma below and subsequent part of the article we will often assign arbitrary value to $\p(A|B)$, when $\p(B) = 0$, if such expressions in our formulas are multiplied by $\p(B)$.

\begin{lemma}\label{le:av-max-independence} Let $X$ be a $\{0,1\}^n$-valued, $k$-homogeneous random vector. If $X$ is $\ell_\infty$-independent with parameter $D$, then for any $i\in [n]$,
\begin{displaymath}
  \sum_{v=1}^n \p(X_v=1)|\p(X_i=1|X_v=1) - \p(X_i=1)| \le D\p(X_i=1).
\end{displaymath}
\end{lemma}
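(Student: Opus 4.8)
The plan is to reduce the claimed inequality to the one-sided $\ell_\infty$-independence bound applied with $\Lambda = \{i\}$, i.e. to the estimate $\|\mathcal I_\mu^{\{i\}}\|_{\ell_\infty\to\ell_\infty}\le D$, which unwinds to $\sum_{v=1}^n |\mathcal I_\mu^{\{i\}}(i,v)| \le D$ (reading off the $i$-th row), where $\mathcal I_\mu^{\{i\}}(i,v) = \p(X_v=1\mid X_i=1) - \p(X_v=1)$. This is a statement about the conditional probabilities of the $X_v$'s given $\{X_i=1\}$, whereas the lemma asks for a statement about the conditional probabilities of $X_i$ given $\{X_v=1\}$. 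The bridge between the two is Bayes' rule together with $k$-homogeneity.

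First I would apply Bayes' rule to each summand: for $v$ with $\p(X_v=1)>0$,
\begin{displaymath}
  \p(X_v=1)\big(\p(X_i=1\mid X_v=1) - \p(X_i=1)\big) = \p(X_i=1)\big(\p(X_v=1\mid X_i=1) - \p(X_v=1)\big),
\end{displaymath}
since both sides equal $\p(X_i=1,X_v=1) - \p(X_i=1)\p(X_v=1)$. (When $\p(X_v=1)=0$ the corresponding term vanishes on the left and, by our convention on conditioning on null events, we may take it to vanish on the right as well.) Taking absolute values and summing over $v\in[n]$ gives
\begin{displaymath}
  \sum_{v=1}^n \p(X_v=1)\,\big|\p(X_i=1\mid X_v=1) - \p(X_i=1)\big| = \p(X_i=1)\sum_{v=1}^n \big|\p(X_v=1\mid X_i=1) - \p(X_v=1)\big|.
\end{displaymath}
It remains to bound the sum on the right-hand side by $D$. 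For the term $v=i$ this contributes $|1 - \p(X_i=1)|$, while for $v\neq i$ we have exactly $|\mathcal I_\mu^{\{i\}}(i,v)|$; so the sum equals $\sum_{v=1}^n |\mathcal I_\mu^{\{i\}}(i,v)|$ provided $\mathcal I_\mu^{\{i\}}(i,i) = 1 - \p(X_i=1)$, which indeed matches the definition (take $j=i$, $\Lambda=\{i\}$: $\p(X_i=1\mid X_i=1,X_i=1) - \p(X_i=1\mid X_i=1) = 1 - 1 = 0$)—here I need to be careful, since the definition of $\mathcal I_\mu^\Lambda$ actually gives the diagonal entry as $0$, not $1-\p(X_i=1)$. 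This is precisely where $k$-homogeneity enters and is the main point to get right.

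The resolution of this subtlety—and the step I expect to be the only real obstacle—is that $k$-homogeneity forces $\sum_{v=1}^n X_v = k$ almost surely, hence $\sum_{v=1}^n \p(X_v=1\mid X_i=1) = k = \sum_{v=1}^n \p(X_v=1)$, so that $\sum_{v=1}^n \big(\p(X_v=1\mid X_i=1) - \p(X_v=1)\big) = 0$. Splitting the $v=i$ term ($=1-\p(X_i=1)\ge 0$) from the rest, this identity says $1 - \p(X_i=1) = -\sum_{v\neq i}\big(\p(X_v=1\mid X_i=1) - \p(X_v=1)\big) = -\sum_{v\neq i}\mathcal I_\mu^{\{i\}}(i,v)$, and therefore $|1-\p(X_i=1)| = \big|\sum_{v\neq i}\mathcal I_\mu^{\{i\}}(i,v)\big| \le \sum_{v\neq i}|\mathcal I_\mu^{\{i\}}(i,v)|$. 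Consequently
\begin{displaymath}
  \sum_{v=1}^n \big|\p(X_v=1\mid X_i=1) - \p(X_v=1)\big| \le 2\sum_{v\neq i}|\mathcal I_\mu^{\{i\}}(i,v)| \le 2\|\mathcal I_\mu^{\{i\}}\|_{\ell_\infty\to\ell_\infty}\le 2D;
\end{displaymath}
combining with the Bayes identity above yields the bound with constant $2D$ rather than $D$. To recover the sharper constant $D$ claimed in the lemma, I would instead group the $v=i$ term with the sum directly: since all the off-diagonal deviations sum (with signs) to $-(1-\p(X_i=1))\le 0$, one has $\sum_{v=1}^n|\cdot| = (1-\p(X_i=1)) + \sum_{v\neq i}|\mathcal I_\mu^{\{i\}}(i,v)|$, and I would argue that this is controlled by the $\ell_\infty\to\ell_\infty$ norm of $\mathcal I_\mu^{\{i\}}$ by working instead with the matrix $\Psi$ or by a direct telescoping argument on the row — this is the ``rearranging the terms'' alluded to in the statement, and modulo this bookkeeping the lemma follows. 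If only the constant $2D$ is needed downstream (which, given $D\ge 1$ for non-deterministic homogeneous vectors by Remark \ref{re:D-at-least-1}, merely changes absolute constants in the applications), the argument above already suffices.
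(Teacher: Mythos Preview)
Your Bayes-rule reduction is exactly the right move and matches the paper's approach, but you chose the wrong $\Lambda$. You work with $\Lambda=\{i\}$ and claim $\mathcal I_\mu^{\{i\}}(i,v)=\p(X_v=1\mid X_i=1)-\p(X_v=1)$; in fact, by the definition, $\mathcal I_\mu^{\{i\}}(i,v)=\p(X_v=1\mid X_i=1,X_i=1)-\p(X_v=1\mid X_i=1)=0$ for every $v$. The matrix you actually want is $\mathcal I_\mu^{\emptyset}$: with $\Lambda=\emptyset$ the conditioning on $\{\forall_{\ell\in\Lambda}X_\ell=1\}$ is vacuous, so
\[
\mathcal I_\mu^{\emptyset}(i,v)=\p(X_v=1\mid X_i=1)-\p(X_v=1),
\]
and in particular the diagonal entry is $\mathcal I_\mu^{\emptyset}(i,i)=1-\p(X_i=1)$, precisely the term you were worried about.

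With this correction the ``subtlety'' you spend most of the argument on evaporates: your Bayes identity already gives
\[
\sum_{v=1}^n \p(X_v=1)\,\bigl|\p(X_i=1\mid X_v=1)-\p(X_i=1)\bigr|
=\p(X_i=1)\sum_{v=1}^n\bigl|\mathcal I_\mu^{\emptyset}(i,v)\bigr|
\le \p(X_i=1)\,\|\mathcal I_\mu^{\emptyset}\|_{\ell_\infty\to\ell_\infty}\le D\,\p(X_i=1),
\]
with the sharp constant $D$ and no appeal to $k$-homogeneity needed. The detour through the cancellation identity $\sum_v(\p(X_v=1\mid X_i=1)-\p(X_v=1))=0$, the resulting factor $2$, and the hand-waved ``telescoping'' at the end are all artifacts of the wrong choice of $\Lambda$.
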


We will also need the following lemma which has already been announced in Remark \ref{re:D-at-least-1}

\begin{lemma}\label{le:D-lower-bound}
Let $X$ be a $k$-homogeneous $\{0,1\}^n$-valued random vector, whose law is not a Dirac mass. If $X$ is one-sided $\ell_\infty$-independent with constant $D$, then $D \ge 1$.
\end{lemma}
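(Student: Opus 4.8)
\textbf{Proof plan for Lemma \ref{le:D-lower-bound}.}

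The plan is to extract from the $\ell_\infty$-independence hypothesis applied with $\Lambda = \emptyset$ a quantitative bound on the average, over a uniformly chosen index, of the difference $\p(X_i=1\mid X_v=1)-\p(X_i=1)$, and then to use $k$-homogeneity to show this average cannot be too small unless $X$ is deterministic. Concretely, take $\Lambda=\emptyset$; then for each $i$ the matrix entry is $\mathcal I_\mu^\emptyset(i,j)=\p(X_j=1\mid X_i=1)-\p(X_j=1)$ (when $\p(X_i=1)>0$), and the one-sided condition gives $\sum_{j=1}^n |\mathcal I_\mu^\emptyset(i,j)| \le D$. The key reformulation is that of Lemma \ref{le:av-max-independence} (whose statement I am allowed to use): for every $i$,
\begin{displaymath}
  \sum_{v=1}^n \p(X_v=1)\,\bigl|\p(X_i=1\mid X_v=1)-\p(X_i=1)\bigr| \le D\,\p(X_i=1).
\end{displaymath}
So it suffices to show that the left-hand side, summed over $i$, is at least $\sum_i \p(X_i=1)$ whenever $X$ is not a point mass, since then averaging forces $D\ge 1$.

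The main computation is therefore to bound $\sum_{i=1}^n\sum_{v=1}^n \p(X_v=1)\bigl|\p(X_i=1\mid X_v=1)-\p(X_i=1)\bigr|$ from below using $\sum_{i=1}^n X_i = k$ a.s. First I would drop the absolute values by the triangle inequality after summing over $i$: for fixed $v$ with $p_v:=\p(X_v=1)>0$,
\begin{displaymath}
  \sum_{i=1}^n \bigl|\p(X_i=1\mid X_v=1)-p_i\bigr| \ge \Bigl|\sum_{i=1}^n \bigl(\p(X_i=1\mid X_v=1)-p_i\bigr)\Bigr|,
\end{displaymath}
but by $k$-homogeneity both $\sum_i \p(X_i=1\mid X_v=1)=k$ and $\sum_i p_i = k$, so this crude bound is useless and I must keep the absolute values and argue more carefully. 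The cleaner route: isolate the diagonal term $i=v$. For $i=v$ we have $\p(X_v=1\mid X_v=1)-p_v = 1-p_v$, contributing $\sum_{v}p_v(1-p_v)$. For the off-diagonal sum, use $\sum_{i\ne v}\bigl(\p(X_i=1\mid X_v=1)-p_i\bigr) = (k-1)-(k-p_v) = p_v-1 = -(1-p_v)$, so $\sum_{i\ne v}\bigl|\p(X_i=1\mid X_v=1)-p_i\bigr|\ge 1-p_v$ as well. Hence for each $v$ with $p_v>0$,
\begin{displaymath}
  \sum_{i=1}^n \bigl|\p(X_i=1\mid X_v=1)-p_i\bigr| \ge (1-p_v) + (1-p_v) = 2(1-p_v),
\end{displaymath}
and multiplying by $p_v$ and summing over $v$ gives $\sum_{i,v} p_v|\cdots| \ge 2\sum_v p_v(1-p_v)$. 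Comparing with Lemma \ref{le:av-max-independence} summed over $i$, which gives the upper bound $D\sum_i p_i = D\sum_v p_v$, yields $D \ge 2\sum_v p_v(1-p_v)/\sum_v p_v$.

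The final step is to rule out the degenerate case: if $\sum_v p_v(1-p_v)=0$ then every $p_v\in\{0,1\}$, so $X$ is a.s. equal to the deterministic vector $(\1_{\{p_v=1\}})_v$, contradicting that the law of $X$ is not a Dirac mass; hence $\sum_v p_v(1-p_v)>0$. This alone does not immediately give $D\ge 1$ — it gives $D\ge 2\sum p_v(1-p_v)/\sum p_v$, which can be small if many $p_v$ are close to $1$. I expect this to be the main obstacle, and the fix is to also exploit homogeneity to bound the number of near-deterministic coordinates, or alternatively to choose the index $i$ in Lemma \ref{le:av-max-independence} more cleverly rather than averaging: pick $i$ with $0<p_i<1$ (which exists unless $X$ is deterministic, by homogeneity one in fact gets at least two such coordinates), and then the diagonal contribution $p_i(1-p_i)$ compared against $Dp_i$ is not enough, but combined with the fact that the "mass" $1-p_i$ must be redistributed among the other coordinates forces $\sum_{v}p_v|\p(X_i=1\mid X_v=1)-p_i|\ge p_i(1-p_i) + p_i(1-p_i)\cdot(\text{something})$. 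The robust version: for this fixed $i$, since $\sum_{v}p_v\p(X_i=1\mid X_v=1) = \sum_v \p(X_i=1,X_v=1) = \E X_i\sum_v X_v = kp_i$ and $\sum_v p_v p_i = kp_i$, the positive and negative parts of $v\mapsto p_v(\p(X_i=1\mid X_v=1)-p_i)$ have equal total mass, each at least the diagonal contribution $p_i(1-p_i)$; therefore the left side of Lemma \ref{le:av-max-independence} is at least $2p_i(1-p_i)$, giving $D\ge 2(1-p_i)$. If some coordinate with $0<p_i\le 1/2$ exists we are done immediately; the remaining case, all non-constant coordinates having $p_i>1/2$, is handled by applying the same argument to the complementary event (replacing $X$ by $\1-X$, which is $(n-k)$-homogeneous and, one checks, retains one-sided $\ell_\infty$-independence with the same $D$ — or, if that stability is not available, by noting that among coordinates with $p_i>1/2$ one can still push the bound through using $\min(p_i,1-p_i)$ and the global constraint $\sum p_i = k$ to locate an index where the product $p_i(1-p_i)$ is bounded below by a constant). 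I would present the clean symmetric argument for a single well-chosen index $i$, as it avoids the averaging loss and delivers $D\ge 1$ directly.
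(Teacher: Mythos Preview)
Your argument with $\Lambda=\emptyset$ is clean and correct up to the bound $D\ge 2(1-p_i)$ for any $i$ with $0<p_i<1$, but the endgame has a real gap. The symmetry move $X\mapsto \1-X$ does \emph{not} preserve one-sided $\ell_\infty$-independence: conditioning on $(\1-X)_\ell=1$ means conditioning on $X_\ell=0$, which is precisely what the one-sided definition does not control (this is the whole point of the distinction with the two-sided notion). And the vaguer alternative you sketch cannot work either, because the obstruction is not an artifact of the estimate but of the choice $\Lambda=\emptyset$ itself. Take $n=3$, $k=2$, $X$ uniform on $\{(1,1,0),(1,0,1),(0,1,1)\}$: every $p_i=2/3$, and one checks directly that $\|\mathcal I_\mu^\emptyset\|_{\ell_\infty\to\ell_\infty}=2/3$. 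So no argument based solely on $\Lambda=\emptyset$ can yield $D\ge 1$.

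The paper's proof fixes exactly this by passing to a \emph{nonempty} $\Lambda$: it chooses $\Lambda$ maximal among sets for which $\p(X\in\cdot\mid \forall_{\ell\in\Lambda}X_\ell=1)$ is well defined and not a Dirac mass. Maximality forces the possible completions $I_1,\ldots,I_m\subseteq\Lambda^c$ of $\Lambda$ to a full support set to be pairwise \emph{disjoint}, hence their conditional probabilities $p_r$ sum to $1$ and some $p_{r_0}\le 1/2$. Taking $i\in I_{r_0}$ and one representative $j_r\in I_r$ for each $r$ then gives $\sum_j|\mathcal I_\mu^\Lambda(i,j)|\ge (1-p_{r_0})+\sum_{r\neq r_0}p_r=2(1-p_{r_0})\ge 1$. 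In the example above this amounts to conditioning on, say, $X_1=1$, after which the row sum jumps from $2/3$ to $1$. Your diagonal/off-diagonal decomposition is exactly the right idea, but it must be run at the conditional level where a ``small'' atom ($p_i\le 1/2$) is guaranteed to exist.
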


\begin{proof}
  Let $\Lambda \subset [n]$ be a maximal set such that the measure $\p(X \in \cdot|\forall_{\ell \in \Lambda} X_\ell = 1)$ is well defined and is not a Dirac mass. Note that it may happen that $\Lambda$ is empty, moreover by homogeneity $s := |\Lambda| < k$. Let $I_1,I_2,\ldots,I_m\subseteq [n]\setminus \Lambda$ be \emph{all} pairwise distinct sets of cardinality $k-s > 0$, such that $\p(\forall_{\ell \in I_r} X_\ell = 1|\forall_{\ell\in \Lambda} X_\ell = 1) > 0$.  By the definition of $\Lambda$, $m \ge 2$. Moreover, the sets $I_r$, $r \in [m]$ are pairwise distinct since if there exists $j \in I_{r_1}\cap I_{r_2}$ for $r_1\neq r_2$, then $\p(X \in \cdot|\forall_{\ell \in \Lambda\cup\{j\}} X_\ell = 1)$ is not a Dirac mass, contradicting the maximality of $\Lambda$. By homogeneity, the events $\{\forall_{\ell \in I_r\cup \Lambda} X_{\ell} = 1\}$, $r \in [m]$, are disjoint and $\sum_{r=1}^m\p(\forall_{\ell \in I_r} X_\ell = 1|\forall_{\ell \in \Lambda} X_\ell = 1) = 1$. Denoting $p_r := \p(\forall_{\ell\in I_r} X_\ell = 1|\forall_{\ell\in \Lambda} X_\ell = 1)$, we thus get $p_r \le 1/2$ for some $r\in [m]$. We may without loss of generality assume that $p_1 \le 1/2$.

  Fix $j_1\in I_1,\ldots,j_m\in I_m$ and let $i = j_1$. We have $\p(X_{j_r} = 1| X_i = 1, \forall_{\ell \in \Lambda} X_\ell = 1) = \1_{\{r = 1\}}$. Moreover $\p(X_{j_r} = 1|\forall_{\ell \in \Lambda}X_\ell = 1) = p_r$. Thus
  \begin{multline*}
    \sum_{j=1}^n |\mathcal{I}_{\mu_X}^{\Lambda}(i,j)|
     \ge \sum_{r=1}^m |\p(X_{j_r} = 1|X_i = 1, \forall_{\ell \in \Lambda} X_\ell = 1) - \p(X_{j_r} = 1|\forall_{\ell \in \Lambda} X_\ell = 1)|\\
    = |1 - p_1| + |0 - p_2|+\ldots + |0 - p_m| = 2(1-p_1) \ge 1,
  \end{multline*}
where in the third equality  we used that $\sum_{r=1}^m p_r = 1$ and in the last inequality that $p_1 \le 1/2$.
This shows that $\|\mathcal{I}_{\mu_X}^{\Lambda}\|_{\ell_\infty \to \ell_\infty} \ge 1$ and ends the proof of the lemma.
\end{proof}

\subsection{Proof of results from Section \ref{sec:Bernstein}}\label{sec:proofs-Bernstein}

The proof will rely on the following lemma
\begin{lemma}\label{le:Bernstein-main-lemma} In the setting of Theorem \ref{thm:Bernstein}, assume that $R = 1$. There exists an absolute constant $C$ such that for $K=CD^2$ and every $\lambda \in [-\frac{1}{4K},\frac{1}{4K}]$,
\begin{displaymath}
  \E \tr \exp(\lambda (Z - \E Z)) \le \tr \exp\Big(\lambda^2 K\E \sum_{i=1}^n X_i A_i^2\Big).
\end{displaymath}
\end{lemma}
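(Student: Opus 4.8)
The plan is to prove Lemma~\ref{le:Bernstein-main-lemma} by a trace-exponential induction over conditionings, in the spirit of the argument of Kaufman, Kyng and Solda \cite[Lemma~5.2]{kaufman2022scalar}, but with the matrix variance proxy $\E\sum_i X_iA_i^2$ in place of $\|\E Z\|\cdot\id$. The central object will be, for each subset $S\subseteq[n]$ describing already-revealed coordinates and each partial configuration, a bound on a conditional trace-Laplace transform of the partial sum; the inequality we aim to establish is the announced formula \eqref{eq:trace-induction}, which upper bounds $\E\tr\exp(\lambda(Z-\E Z))$ after one ``reveal'' step in terms of the same quantity for a vector with one fewer free coordinate, at the cost of an error term controlled by $\E X_iA_i^2$ and the $\ell_\infty$-independence parameter $D$.

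\medskip

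First I would reduce to $R=1$ (already assumed) and, writing $Z-\E Z=\sum_i (X_i-p_i)A_i$ with $p_i=\p(X_i=1)$, peel off one coordinate at a time. The key technical step is the single-step estimate: conditioning on the value of one well-chosen coordinate $X_v$ (or, more precisely, exchanging the role of coordinates as in \cite{kaufman2022scalar}), I expand $\exp(\lambda(\cdots + (X_i-p_i)A_i))$ using the Golden--Thompson inequality to split off the $i$-th factor, then apply Lemma~\ref{le:matrix-exp} to the bounded Hermitian increment $\lambda(X_i-p_i)A_i$ (this is where $|\lambda|\le \frac{1}{4K}$ and $R=1$ enter, guaranteeing the argument is $\preceq\id$ in absolute value after the split), obtaining a bound of the form $\id + \lambda(X_i-p_i)A_i + 2\lambda^2 A_i^2$ up to the $\tr$. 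Taking conditional expectation over $X_i$, the linear term does not vanish (coordinates are dependent), but its conditional mean is $\p(X_i=1\mid\text{rest})-p_i$, and here Lemma~\ref{le:av-max-independence} — the averaged $\ell_\infty$-independence bound $\sum_v p_v|\p(X_i=1\mid X_v=1)-p_i|\le D p_i$ — is used to show that, after the averaging inherent in the induction, the accumulated linear-error terms are dominated by $O(D)\lambda^2\sum_i p_iA_i^2$, so they can be absorbed into the quadratic term. Then $\id + M \preceq \exp(M)$ for $M\succeq 0$ (or a variant of Lemma~\ref{le:matrix-exp} run backwards) lets me re-exponentiate, reducing to the same quantity for the vector with coordinate $i$ deleted.

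\medskip

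Iterating this over all $n$ coordinates yields $\E\tr\exp(\lambda(Z-\E Z))\le \tr\exp\big(c\lambda^2(D + D^2)\sum_i p_iA_i^2\big)$ for a numerical constant $c$, and since $D\ge 1$ by Lemma~\ref{le:D-lower-bound} (here homogeneity of $X$ is used), $D+D^2\le 2D^2$, giving the claimed bound with $K=CD^2$ for $C=2c$. The range $\lambda\in[-\frac{1}{4K},\frac{1}{4K}]$ is exactly what is needed at each step for Lemma~\ref{le:matrix-exp} to apply and for the geometric-type accumulation of errors across the $n$ steps to remain summable (each step contributes a factor bounded by a constant close to $1$).

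\medskip

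The main obstacle I anticipate is the bookkeeping in the induction: ensuring that the linear ``drift'' terms produced at each peeling step, which involve conditional inclusion probabilities given an arbitrary already-revealed configuration, can be uniformly controlled — not just in expectation over that configuration but inside the trace-exponential, where one cannot freely take expectations of a nonlinear function. The resolution, following \cite{kaufman2022scalar}, is to keep the induction hypothesis in a form that carries a deterministic matrix bound valid for every conditioning, and to use the operator monotonicity of $\exp$ together with Lemma~\ref{le:op-conv} to convert the (possibly sign-indefinite) cross terms $A_iA_j+A_jA_i$ arising from the drift into $A_i^2+A_j^2$ contributions; summing these and invoking Lemma~\ref{le:av-max-independence} closes the loop. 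A secondary subtlety is that, unlike in the nonnegative-definite case of \cite{kaufman2022scalar}, the $A_i$ are merely symmetric, so one must work with $A_i^2$ throughout and cannot use the simplification $\|\E Z\|$; this is precisely the point of the refinement and is handled by never discarding the $A_i^2$ structure when applying Lemma~\ref{le:matrix-exp}.
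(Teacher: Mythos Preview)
Your proposal assembles the right ingredients (Golden--Thompson, Lemma~\ref{le:matrix-exp}, Lemma~\ref{le:op-conv}, Lemma~\ref{le:av-max-independence}, Lemma~\ref{le:D-lower-bound}), but the induction scheme you describe has a structural gap. You propose to reveal coordinates $X_i$ one at a time and iterate over all $n$ of them, conditioning on partial configurations. This requires control of the law of $X$ given events of the form $\{X_i=0\}$ as well as $\{X_i=1\}$, i.e.\ \emph{two-sided} $\ell_\infty$-independence; the one-sided hypothesis of Theorem~\ref{thm:Bernstein} only constrains conditionings on $\{\forall_{\ell\in\Lambda}X_\ell=1\}$. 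The paper's proof avoids this by inducting on the homogeneity parameter $k$, not on $n$: at each step one samples a random index $V$ uniformly from $\supp(X)$ (so $X_V=1$ automatically), notes that conditionally on $V=v$ the vector $X_{\{v\}^c}$ is $(k-1)$-homogeneous and still one-sided $\ell_\infty$-independent with the same $D$, and applies the induction hypothesis \emph{first}, with the shifted deterministic matrix $H'=H+\lambda A_V$, to this conditional law. Only after this is Golden--Thompson invoked, to split off a residual factor $\exp(\lambda Z_V + K\lambda^2\widehat{Z}_V - K\lambda^2 A_V^2)$ where $Z_V=\E(Z\mid V)-\E Z$ and $\widehat{Z}_V=\E(\sum_i X_iA_i^2\mid V)-\E\sum_i X_iA_i^2$; the order is the reverse of what you wrote.

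Two further corrections. First, the linear term $Z_V$ \emph{does} have mean zero, being a centered conditional expectation; what must be controlled is $\|Z_V\|\le D$ and $\E Z_V^2\preceq D^2\E A_V^2$ (this is where Lemmas~\ref{le:op-conv} and~\ref{le:av-max-independence} enter, exactly as you guessed), and likewise for the second drift $\widehat{Z}_V$ that you omit --- it arises because the variance proxy $K\lambda^2\E(\sum_i X_iA_i^2\mid V)$ is itself carried through the induction and fluctuates with $V$. Second, the single-step closure is not ``$\id+M\preceq\exp(M)$ run backwards'' but rather the bound $\|\E\exp(\lambda Z_V+K\lambda^2\widehat{Z}_V-K\lambda^2 A_V^2)\|\le 1$, obtained by applying Lemma~\ref{le:matrix-exp} with $A=\lambda Z_V+K\lambda^2\widehat{Z}_V$ and $B=K\lambda^2 A_V^2$, using $\E Z_V=\E\widehat{Z}_V=0$ to kill the linear part, and then choosing $K$ of order $D^2$ so that the negative term $-K\lambda^2\E A_V^2$ absorbs all quadratic contributions.
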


In our proofs we will use a sampling construction from \cite{kaufman2022scalar}. We remark that it is equivalent to sampling non-zero coordinates of $X$ in a uniformly random order, and in this version in the case of SCP measures it was used already by Pemantle and Peres  \cite{MR3197973}, and later, e.g., in \cite{MR3899605,MR4683375}. In \cite{kaufman2022scalar} it is presented in a slightly different language as a two stage sampling procedure for the law of $X$, allowing to decompose $\mu_X$ into a mixture of conditional $(k-1)$-homogeneous distributions and giving rise to an induction approach. We prefer to present this idea in terms of conditional expectations, starting with the random vector $X$ as this form will be more convenient for us when dealing with SCP measures and more general functions than linear combinations of matrices.

Let $V$ be a random element of $[n]$, which conditionally on $X$ is sampled uniformly from the set $\supp(X) = \{i\in [n]\colon X_i = 1\}$. By $k$-homogeneity it follows that for every $v \in [n]$,
\begin{align}\label{eq:law-of-V}
  \p(V = v) = \frac{\p(X_v=1)}{k}.
\end{align}

It is also easy to see that
\begin{align}\label{eq:conditioning-on-V}
\p(X \in \cdot| V = v) = \p(X \in \cdot|X_v = 1).
\end{align}
In particular, it follows that if $X$ is $\ell_\infty$-independent with parameter $D$, then conditionally on $V$, the random vector $X_{\{V\}^c}$ with values in $\{0,1\}^{\{V\}^c}$ is $(k-1)$-homogeneous and also $\ell_\infty$-independent with parameter $D$.

In the proof of Lemma \ref{le:Bernstein-main-lemma} we will need the following two random variables
\begin{align}\label{eq:definition-of-Z_V}
Z_V &= \E \Big(\sum_{i=1}^n X_i A_i|V\Big) - \E \sum_{i=1}^n X_i A_i,\\
\widehat{Z}_V & = \E \Big(\sum_{i=1}^n X_i A_i^2|V\Big) - \E \sum_{i=1}^n X_i A_i^2.
\end{align}

\begin{lemma}\label{le:Z_V-estimates}
Under the assumptions of Lemma \ref{le:Bernstein-main-lemma},
\begin{align}\label{eq:Z_v-norm-bound}
& \| Z_V \|, \|\widehat{Z}_V\|  \le D,\\
\label{eq:Z_v-square-bound}
& \E Z_V^2 \preceq D^2 \E A_V^2\textrm{ and } \E \widehat{Z}_V^2 \preceq D^2 \E A_V^4.
\end{align}
\end{lemma}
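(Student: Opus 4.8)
The plan is to prove both bounds in Lemma~\ref{le:Z_V-estimates} by writing $Z_V$ and $\widehat Z_V$ explicitly using \eqref{eq:law-of-V}--\eqref{eq:conditioning-on-V}, and then reducing everything to the scalar estimate supplied by Lemma~\ref{le:av-max-independence}. Write $p_i = \p(X_i=1)$ and $p_{i|v} = \p(X_i=1|X_v=1) = \p(X_i=1|V=v)$. Then, since conditioning on $V=v$ is the same as conditioning on $X_v=1$, we have
\begin{displaymath}
  Z_V = \sum_{i=1}^n (p_{i|V} - p_i) A_i, \qquad \widehat Z_V = \sum_{i=1}^n (p_{i|V} - p_i) A_i^2.
\end{displaymath}

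For the norm bound \eqref{eq:Z_v-norm-bound}, I would use that $R = \max_i\|A_i\| = 1$ together with the triangle inequality in operator norm, giving $\|Z_V\|\le \sum_i |p_{i|V} - p_i|\,\|A_i\| \le \sum_i |p_{i|V}-p_i|$, and similarly $\|\widehat Z_V\|\le \sum_i|p_{i|V}-p_i|\,\|A_i\|^2\le \sum_i|p_{i|V}-p_i|$. Now I need a deterministic (in $V$) bound on $\sum_i|p_{i|v}-p_i|$. Lemma~\ref{le:av-max-independence} controls $\sum_v p_v|p_{i|v}-p_i|$ for each fixed $i$, which is the ``wrong'' order of summation; but the matrix $M$ with entries $M_{vi} = \p(X_i=1|X_v=1) - \p(X_i=1) = \mathcal I_{\mu_X}^{\{v\}}(v,i)$ (for $i\neq v$; the diagonal contributes $1-p_v$ in a controlled way) is, up to the diagonal, exactly an interdependence matrix row, so one-sided $\ell_\infty$-independence with parameter $D$ directly gives $\sum_i |p_{i|v}-p_i| = \sum_i|\mathcal I_{\mu_X}^{\{v\}}(v,i)| \le \|\mathcal I_{\mu_X}^{\{v\}}\|_{\ell_\infty\to\ell_\infty}\le D$ for every $v$ with $p_v>0$ (the diagonal term $i=v$ is $p_{v|v}-p_v = 1-p_v$, which is already included in that row sum). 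This is in fact cleaner than going through Lemma~\ref{le:av-max-independence}, and yields \eqref{eq:Z_v-norm-bound} immediately. I should double-check the precise definition of $\mathcal I_\mu^\Lambda$ with $\Lambda=\{v\}$ to confirm $\mathcal I_{\mu_X}^{\{v\}}(v,i) = p_{i|v}-p_i$ exactly.

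For the second moment bounds \eqref{eq:Z_v-square-bound}, the key identity is to expand in a basis where we use the Cauchy--Schwarz inequality \emph{for the randomness in $V$}, matrix-valued style. Write $a_i(v) = p_{i|v}-p_i$ so that $Z_V = \sum_i a_i(V) A_i$. Then, for any fixed realization,
\begin{displaymath}
  Z_V^2 = \sum_{i,j} a_i(V)a_j(V)\, A_iA_j \preceq \sum_{i,j} |a_i(V)|\,|a_j(V)|\, \tfrac12(A_i^2 + A_j^2) = \Big(\sum_i |a_i(V)|\Big)\sum_j |a_j(V)| A_j^2,
\end{displaymath}
using Lemma~\ref{le:op-conv} (the inequality $A_iA_j + A_jA_i\preceq A_i^2+A_j^2$, applied with a sign adjustment absorbed into $|a_i a_j|$, or more directly $(\alpha A_i \pm \beta A_j)^2\succeq 0$). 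Since $\sum_i|a_i(V)|\le D$ pointwise by the norm argument above, we get $Z_V^2 \preceq D\sum_j |a_j(V)| A_j^2$. Taking expectation over $V$ and using $\E|a_j(V)| = \sum_v p_v |p_{j|v}-p_j|/k \le D p_j/k$ by Lemma~\ref{le:av-max-independence} (this is exactly where that lemma is used), we obtain $\E Z_V^2 \preceq D^2\sum_j (p_j/k) A_j^2 = D^2\,\E A_V^2$, since $\E A_V^2 = \sum_j \p(V=j)A_j^2 = \sum_j (p_j/k) A_j^2$. The bound for $\widehat Z_V$ is identical with $A_j$ replaced by $A_j^2$ throughout, giving $\E\widehat Z_V^2\preceq D^2 \E A_V^4$.

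The main obstacle, and the only genuinely nonobvious step, is the matrix Cauchy--Schwarz manipulation $\sum_{i,j} a_i a_j A_i A_j \preceq (\sum_i |a_i|)(\sum_j |a_j| A_j^2)$: one must be careful that $A_iA_j$ is not symmetric, so the clean way is to symmetrize using Lemma~\ref{le:op-conv} in the form $a_ia_j(A_iA_j+A_jA_i)\preceq |a_i||a_j|(A_i^2+A_j^2)$ and then sum, rather than trying to factor a product of matrices directly. Everything else is a bookkeeping combination of the triangle inequality, the definition of the law of $V$, and the two independence lemmas already proved; no new ideas are needed.
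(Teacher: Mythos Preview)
Your proposal is correct and follows essentially the same route as the paper: the norm bound via the triangle inequality and the definition of one-sided $\ell_\infty$-independence, and the second moment bound via the symmetrized Cauchy--Schwarz estimate $a_ia_j(A_iA_j+A_jA_i)\preceq |a_i||a_j|(A_i^2+A_j^2)$ from Lemma~\ref{le:op-conv}, followed by Lemma~\ref{le:av-max-independence} after taking expectation in $V$. The one detail to fix is the choice of $\Lambda$: you want $\Lambda=\emptyset$, not $\Lambda=\{v\}$, so that $\mathcal I_{\mu_X}^{\emptyset}(v,i)=\p(X_i=1\mid X_v=1)-\p(X_i=1)=p_{i|v}-p_i$ and the row sum bound $\sum_i|p_{i|v}-p_i|\le D$ is exactly the $v$-th row of $\|\mathcal I_{\mu_X}^{\emptyset}\|_{\ell_\infty\to\ell_\infty}\le D$.
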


\begin{proof}

We will first prove \eqref{eq:Z_v-norm-bound}. It is enough to consider the inequality for $Z_V$. The estimate on $\widehat{Z}_V$ follows from it by substituting $A_i^2$ in place of $A_i$ (note that $\|A_i^2\| \le 1$).

Using the assumption that $\|A_i\|\le 1$ and \eqref{eq:conditioning-on-V}, we obtain that on the event of the form $\{V = v\}$ and positive probability,
\begin{multline*}
\| Z_V \| = \Big\|\sum_{i=1}^n A_i \bigl(\p(X_i = 1|X_v = 1) - \p(X_i = 1)\bigr)\Big\|\\
\le \sum_{i=1}^n \|A_i\| |\p(X_i = 1|X_v = 1) - \p(X_i = 1))| \\
\le \sum_{i=1}^n |\p(X_i = 1|X_v = 1) - \p(X_i = 1)| \le D
\end{multline*}
by $\ell_\infty$-independence.

We will now proceed with the proof of \eqref{eq:Z_v-square-bound}. Again, it is enough to prove the first inequality, the second one follows by substituting $A_i^2$ for $A_i$.

On the event $\{V = v\}$, we have denoting $p(i) = \p(X_i = 1)$, $p_v(i) = \p(X_i = 1|X_v = 1)$,
\begin{multline*}
  Z_V^2 = \Big(\sum_{i=1}^n A_i (p_v(i) - p(i)\Big)^2 \\
  = \sum_{i=1}^n (p_v(i) - p(i))^2 A_i^2 + \sum_{1\le i < j \le n} (p_v(i) - p(i))(p_v(j) - p(j))(A_i A_j + A_j A_i)\\
  \preceq \sum_{i=1}^n (p_v(i) - p(i))^2 A_i^2  + \sum_{1\le i < j \le n} |p_v(i) - p(i)||p_v(i) - p(i)| (A_i^2 + A_j^2)\\
  = \sum_{i=1}^n |p_v(i) - p(i)| A_i^2 \sum_{j=1}^n |p_v(i) - p(i)| \preceq D \sum_{i=1}^n |p_v(i) - p(i)|A_i^2,
\end{multline*}
where in the first inequality we used Lemma \ref{le:op-conv} and in the last one the $\ell_\infty$-independence.
Thus, by  \eqref{eq:law-of-V} and Lemma \ref{le:av-max-independence},
\begin{multline*}
  \E Z_V^2 \preceq D \sum_{v=1}^n \p(V=v) \sum_{i=1}^n |p_v(i) - p(i)| A_i^2\\
  = D \sum_{v=1}^n \frac{p(v)}{k} \sum_{i=1}^n |p_v(i) - p(i)| A_i^2 \\
  = D \sum_{i=1}^n A_i^2 \sum_{v=1}^n \frac{p(v)}{k}|p_v(i) - p(i)|\\
  \preceq D \sum_{i=1}^n A_i^2 D \frac{p(i)}{k} = D^2 \sum_{i=1}^n A_i^2 \p(V = i) = D^2 \E A_V^2,
\end{multline*}
which proves \eqref{eq:Z_v-square-bound}.

\end{proof}

\begin{lemma}\label{le:norm-bound}
Let $M_1,M_2,M_3$ be $\MS$-valued random variables, such that $\E M_1 = \E M_2 = 0$, $0\preceq M_3 \preceq \id$ , and for some $D \ge 1$,
\begin{align}
& \| M_1 \|, \|M_2\|  \le D,\label{eq:norm-bound}\\
\label{eq:square-bound}
& \E M_1^2 \preceq D^2 \E M_3\textrm{ and } \E M_2^2 \preceq D^2 \E M_3.
\end{align}
There exists a universal constant $C$, such that for any $K \ge CD^2$ and any $\lambda \in (-\frac{1}{4K},\frac{1}{4K})$,
\begin{align}\label{eq:Z_v-estimates}
\Big\| \E \exp\Big(\lambda M_1 + K\lambda^2 M_2 - K\lambda^2 M_3\Big)\Big\| \le 1.
\end{align}
\end{lemma}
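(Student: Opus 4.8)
The plan is to estimate $\E \exp(\lambda M_1 + K\lambda^2 M_2 - K\lambda^2 M_3)$ via the matrix inequality of Lemma \ref{le:matrix-exp}, reducing the exponential to a quadratic expression whose expectation is controlled by the variance hypotheses \eqref{eq:square-bound}. First I would write $A = \lambda M_1 + K\lambda^2 M_2$ and $B = K\lambda^2 M_3$, and check that on the range $\lambda \in (-\tfrac{1}{4K}, \tfrac1{4K})$ with $K \ge CD^2$ one has (with probability one) $A \preceq \id$ and $0 \preceq B$, so that Lemma \ref{le:matrix-exp} applies. Indeed $0 \preceq M_3 \preceq \id$ gives $0 \preceq B \preceq K\lambda^2 \id \preceq \tfrac{1}{16K}\id$, and $\|A\| \le |\lambda| D + K\lambda^2 D \le \tfrac{D}{4K} + \tfrac{D}{16K} = \tfrac{5D}{16K} \le \tfrac{5}{16C} \le 1$ for $C$ large enough (here I use $D \ge 1$). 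Hence
\begin{displaymath}
  \exp(A - B) \preceq \id + A - B + 2A^2 + 2B^2,
\end{displaymath}
and taking expectations, since $\E M_1 = \E M_2 = 0$, the linear term $\E A = 0$, so
\begin{displaymath}
  \E \exp(\lambda M_1 + K\lambda^2 M_2 - K\lambda^2 M_3) \preceq \id - K\lambda^2 \E M_3 + 2\E A^2 + 2K^2\lambda^4 \E M_3^2.
\end{displaymath}

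Next I would bound the two error terms by a constant multiple of $K\lambda^2 \E M_3$. For the last one, $\E M_3^2 \preceq \E M_3$ since $0 \preceq M_3 \preceq \id$, so $2K^2\lambda^4 \E M_3^2 \preceq 2K^2\lambda^4 \E M_3 \le \tfrac{K\lambda^2}{8}\E M_3$ using $K\lambda^2 \le \tfrac{1}{16}$. For $\E A^2$, expand $A^2 = \lambda^2 M_1^2 + K\lambda^3(M_1M_2 + M_2M_1) + K^2\lambda^4 M_2^2$; by Lemma \ref{le:op-conv}, $M_1M_2 + M_2M_1 \preceq M_1^2 + M_2^2$, and similarly $-(M_1^2+M_2^2) \preceq M_1M_2+M_2M_1$, so after taking expectations and using \eqref{eq:square-bound},
\begin{displaymath}
  \E A^2 \preceq \big(\lambda^2 + K|\lambda|^3 + K^2\lambda^4\big)\big(\E M_1^2 + \E M_2^2\big) \preceq 3\lambda^2 \cdot 2D^2 \E M_3 = 6D^2\lambda^2 \E M_3,
\end{displaymath}
where I used $K|\lambda| \le \tfrac14$ so that $K|\lambda|^3, K^2\lambda^4 \le \lambda^2$, hence the prefactor is $\le 3\lambda^2$. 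Therefore $2\E A^2 \preceq 12 D^2\lambda^2 \E M_3$. Collecting the three pieces,
\begin{displaymath}
  \E \exp(\lambda M_1 + K\lambda^2 M_2 - K\lambda^2 M_3) \preceq \id - \Big(K - 12D^2 - \tfrac{K}{8}\Big)\lambda^2 \E M_3.
\end{displaymath}
Choosing $C \ge 16$ (so that $K \ge 16 D^2$ forces $K - 12D^2 - K/8 \ge 0$) makes the coefficient of $\lambda^2\E M_3$ nonpositive, hence the right-hand side is $\preceq \id$, and since $\E M_3 \succeq 0$ this gives $\E\exp(\cdots) \preceq \id$, so its operator norm is at most $1$, which is \eqref{eq:Z_v-estimates}.

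The only mild subtlety — and the step I would be most careful with — is the verification that the hypotheses of Lemma \ref{le:matrix-exp} hold \emph{pointwise} (almost surely), not merely in expectation: one needs the deterministic bounds $\|M_1\|, \|M_2\| \le D$ and $0 \preceq M_3 \preceq \id$ from \eqref{eq:norm-bound} and the standing assumption, combined with the smallness of $\lambda$, to guarantee $\lambda M_1 + K\lambda^2 M_2 \preceq \id$ surely; everything else is a routine manipulation with the operator order and Lemma \ref{le:op-conv}. No randomness beyond linearity of expectation and the two moment bounds is used, so the argument is insensitive to the precise dependence structure and only the constants need tracking; choosing $C = 16$ (or any slightly larger absolute constant to absorb rounding) suffices.
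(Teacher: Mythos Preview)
Your proof is correct and follows essentially the same approach as the paper: apply Lemma \ref{le:matrix-exp} with $A=\lambda M_1+K\lambda^2 M_2$ and $B=K\lambda^2 M_3$, kill the linear terms using $\E M_1=\E M_2=0$, and absorb the quadratic remainders into $K\lambda^2\E M_3$ via Lemma \ref{le:op-conv} and the moment bounds \eqref{eq:square-bound}. The only cosmetic difference is that the paper bounds $(\lambda M_1+K\lambda^2 M_2)^2$ via the inequality $(A+B)^2\preceq 2A^2+2B^2$, whereas you expand the square and control the cross term with $\pm(M_1M_2+M_2M_1)\preceq M_1^2+M_2^2$; this yields $C=16$ for you versus $C=35/8$ in the paper, but the arguments are otherwise identical.
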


\begin{proof}
By \eqref{eq:norm-bound} and the assumption $K |\lambda| \le 1/4$, we get
\begin{align*}
\| \lambda M_1 + K\lambda^2 M_2\| \le (|\lambda| + K\lambda^2)D  \le \frac{5}{16} \frac{D}{K} = \frac{5}{16CD} \le 1
\end{align*}
if $C \ge 5/16$.

Thus, using Lemma \ref{le:matrix-exp} with $A = \lambda M_1 + K\lambda^2 M_2$, $B = K \lambda^2 M_3$, we obtain
\begin{align*}
\E &\exp\Big(\lambda M_1 + K\lambda^2 M_2 - K\lambda^2 M_3\Big)\\
&\preceq \id + \lambda \E M_1 + K\lambda^2 \E M_2 - K\lambda^2 \E M_3 + 2 \E (\lambda M_1 + K\lambda^2 M_2)^2 + 2 K^2\lambda^4\E M_3^2\\
& = \id - K\lambda^2 \E M_3 + 2 \E (\lambda M_1 + K\lambda^2 M_2)^2 + 2 K^2\lambda^4 \E M_3^2\\
&\preceq \id - K\lambda^2 \E M_3 + 4 \lambda^2 \E M_1^2 + 4 K^2\lambda^4 \E M_2^2 + 2K^2\lambda^4 \E M_3^2\\
&\preceq \id - K\lambda^2 \E M_3 + 4 \lambda^2 D^2 \E M_3 + 4D^2 K^2 \lambda^4 \E M_3 + 2K^2\lambda^4 \E M_3^2
\end{align*}
where in the equality we used the assumption $\E M_1 = \E M_2 =0$, in the second inequality Lemma \ref{le:op-conv} and in the last one the assumption \eqref{eq:square-bound}. Taking into account that $0 \preceq M_3 \preceq \id$, we have
\begin{multline*}
- K\lambda^2 \E M_3 + 4 \lambda^2 D^2 \E M_3 + 4D^2 K^2 \lambda^4 \E M_3 + 2K^2\lambda^4 \E M_3^2 \\
\preceq (-K\lambda^2 + 4\lambda^2 D^2 + 4D^2K^2 \lambda^4 + 2K^2\lambda^4)\E M_3.
\end{multline*}
We have $|\lambda| K \le 1/4$, so
\begin{displaymath}
  -K\lambda^2 + 4\lambda^2 D^2 + 4D^2K^2 \lambda^4 + 2K^2\lambda^4 \le -K\lambda^2 + 4\lambda^2 D^2 + \frac{1}{4} D^2  \lambda^2 + \frac{1}{8}\lambda^2  \le 0
\end{displaymath}
for $K \ge 35D^2/8$ (recall that $D \ge 1$).

We thus get $\E \exp\Big(\lambda M_1 + K\lambda^2 M_2 - K\lambda^2 M_3\Big) \preceq \id$, which ends the proof since the left-hand side is nonnegative definite.
We remark that the value of $C$ obtained from the proof is $35/8$.
\end{proof}

\begin{cor}\label{cor:exponential-estimate}
Under the assumptions of Lemma \ref{le:Bernstein-main-lemma} there exists an absolute constant $C$ such that for any $K\ge CD^2$ and every $\lambda \in [-\frac{1}{4K},\frac{1}{4K}$,
\begin{align}\label{eq:exponential-estimate}
\Big\| \E \exp\Big(\lambda Z_V + K\lambda^2 \widehat{Z}_V - K\lambda^2 A_V^2\Big)\Big\| \le 1.
\end{align}
\end{cor}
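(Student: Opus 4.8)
The plan is to obtain \eqref{eq:exponential-estimate} as an immediate application of Lemma \ref{le:norm-bound} with the choice $M_1 = Z_V$, $M_2 = \widehat{Z}_V$, $M_3 = A_V^2$, since with this identification $\lambda M_1 + K\lambda^2 M_2 - K\lambda^2 M_3$ is exactly the exponent appearing in \eqref{eq:exponential-estimate}. So the whole argument reduces to checking the hypotheses of that lemma for this triple.

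First I would dispose of the degenerate case. If the law of $X$ is a Dirac mass, then $\sum_{i=1}^n X_i A_i$ is almost surely constant, so $Z_V = \widehat{Z}_V = 0$, and the left-hand side of \eqref{eq:exponential-estimate} equals $\|\E \exp(-K\lambda^2 A_V^2)\|$, which is at most $1$ because $-K\lambda^2 A_V^2 \preceq 0$. Hence we may assume $X$ is non-degenerate, and then Lemma \ref{le:D-lower-bound} yields $D \ge 1$, which is one of the standing hypotheses of Lemma \ref{le:norm-bound}.

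Next I would verify the remaining conditions. The centering $\E M_1 = \E M_2 = 0$ is immediate from the definitions \eqref{eq:definition-of-Z_V} and the tower property of conditional expectation. Since $A_V$ is nonnegative definite with $\|A_V\| \le R = 1$, we get $0 \preceq M_3 = A_V^2 \preceq \id$, and also $A_V^4 \preceq A_V^2$, hence $\E A_V^4 \preceq \E A_V^2 = \E M_3$. The norm bounds $\|M_1\|, \|M_2\| \le D$ are exactly \eqref{eq:Z_v-norm-bound}, while the variance bounds follow from \eqref{eq:Z_v-square-bound}: one has $\E M_1^2 = \E Z_V^2 \preceq D^2 \E A_V^2 = D^2 \E M_3$ directly, and $\E M_2^2 = \E \widehat{Z}_V^2 \preceq D^2 \E A_V^4 \preceq D^2 \E M_3$ after invoking $A_V^4 \preceq A_V^2$. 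With all hypotheses in place, Lemma \ref{le:norm-bound} delivers \eqref{eq:exponential-estimate} with the absolute constant $C$ from that lemma, for all $K \ge CD^2$ and all $\lambda$ with $K|\lambda| \le 1/4$; since the proof of Lemma \ref{le:norm-bound} only uses $K|\lambda|\le 1/4$, this covers the full closed interval in the statement.

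There is essentially no serious obstacle here. The only two points that require a moment's thought are the reduction to the case $D \ge 1$ via Lemma \ref{le:D-lower-bound}, needed to match the hypothesis of Lemma \ref{le:norm-bound}, and the elementary observation $A_V^4 \preceq A_V^2$ (valid because $\|A_V\|\le 1$), which is what lets us convert the second bound in \eqref{eq:Z_v-square-bound}, stated in terms of $\E A_V^4$, into the form $\E M_2^2 \preceq D^2 \E M_3$ required by Lemma \ref{le:norm-bound}.
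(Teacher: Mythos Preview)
Your proof is correct and follows essentially the same route as the paper: reduce to the non-degenerate case via Lemma~\ref{le:D-lower-bound} to secure $D\ge 1$, then apply Lemma~\ref{le:norm-bound} with $M_1=Z_V$, $M_2=\widehat{Z}_V$, $M_3=A_V^2$, using Lemma~\ref{le:Z_V-estimates} together with $A_V^4\preceq A_V^2\preceq \id$. One small imprecision: $A_V$ need not be nonnegative definite (the $A_i$ are merely symmetric), but this is harmless since $A_V^2$ is automatically nonnegative definite and $\|A_V\|\le 1$ still gives $0\preceq A_V^2\preceq \id$ and $A_V^4\preceq A_V^2$.
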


\begin{proof} We may assume that $X$ is not deterministic, since otherwise $Z_V = \widehat{Z}_V = 0$ and the lemma is trivial. Thus $D \ge 1$ (see Lemma \ref{le:D-lower-bound}).

Now it is enough to apply Lemma \ref{le:norm-bound} with $M_1 = Z_V$, $M_2 = \widehat{Z}_V$ and $M_3 = A_V^2$. The assumptions of Lemma \ref{le:norm-bound} follow from Lemma \ref{le:Z_V-estimates} and the observation that $\|A_i\|\le 1$ for all $i$ implies that $0\preceq A_V^4 \preceq A_V^2 = M_3 \preceq \id$.
\end{proof}

\begin{proof}[Proof of Lemma \ref{le:Bernstein-main-lemma}]
We will prove by induction on $k$ that for every deterministic matrix $H$,
\begin{align}\label{eq:trace-induction}
  \E \tr \exp(H + \lambda Z) \le \tr \exp\Big(H + \lambda \E Z + K \lambda^2\E \sum_{i=1}^n X_i A_i^2\Big).
\end{align}
For $k = 0$ the statement is trivial as both sides of \eqref{eq:trace-induction} equal $\tr e^H$. Let us thus assume that $k \ge 1$ and the statement holds for $(k-1)$.
Let $V$ be the random element of $[n]$ defined above. Conditionally on $V$, $X_{\{V\}^c}$ is $\ell_\infty$-independent with parameter $D$ and $(k-1)$-homogeneous. Thus, using the induction assumption with the matrix $H' = H + \lambda A_V$, measurable with respect to $\sigma(V)$, we may write
\begin{align*}
  \E \tr &\exp(H + \lambda Z)  = \E \E \Big(\tr \exp\Big(H + \lambda  A_V + \sum_{i\neq V} X_i A_i\Big)|V\Big)  \\
  & \le \E \tr \exp\Big(H + \lambda  A_V + \lambda \E (\sum_{i\neq V} X_i A_i|V) + K\lambda^2 \E (\sum_{i\neq V} X_i A_i^2|V)\Big)\\
  & = \E \tr \exp\Big(H + \lambda \E (\sum_{i=1}^n X_i A_i|V) + K\lambda^2 \E (\sum_{i= 1}^n X_i A_i^2|V) - K\lambda^2 A_V^2\Big)\\
  & = \E \tr \exp\Big(H + \lambda \E Z + K \lambda^2\E \sum_{i=1}^n X_i A_i^2 + \lambda Z_V + K\lambda^2 \widehat{Z}_V  - K\lambda^2 A_V^2\Big),
\end{align*}
where in the second equality we used the fact that by construction $X_V = 1$.

Using the Golden-Thompson inequality, the estimate $\tr AB \le \tr A \|B\|$ valid for nonnegative definite $A$, and Lemma \ref{le:norm-bound}, we further estimate

\begin{multline*}
\E \tr \exp(H + \lambda Z)  \\
\le \E \tr \exp\Big(H + \lambda \E Z + K \lambda^2\E \sum_{i=1}^n X_i A_i^2\Big) \exp\Big(\lambda Z_V + K\lambda^2 \widehat{Z}_V - K\lambda^2A_V\Big)\\
= \tr \exp\Big(H + \lambda \E Z + K \lambda^2\E \sum_{i=1}^n X_i A_i^2\Big) \E \exp\Big(\lambda Z_V + K\lambda^2 \widehat{Z}_V - K\lambda^2A_V\Big)\\
\le \tr \exp\Big(H + \lambda \E Z + K \lambda^2\E \sum_{i=1}^n X_i A_i^2\Big)  \Big\| \E \exp\Big(\lambda Z_V + K\lambda^2 \widehat{Z}_V - K\lambda^2 A_V^2\Big)\Big\|\\
\le \tr \exp\Big(H + \lambda \E Z + K \lambda^2\E \sum_{i=1}^n X_i A_i^2\Big),
\end{multline*}
which ends the proof of the induction step. The proof of the lemma is completed by substituting $H = -\lambda \E Z$.
\end{proof}

\begin{proof}[Proof of Theorem \ref{thm:Bernstein}] Having Lemma \ref{le:Bernstein-main-lemma}, we can proceed in the usual way, using exponential Chebyshev's inequality.
By homogeneity we may assume that $R = 1$.

We have for $\lambda > 0$,
\begin{multline*}
  \p(\|Z - \E Z\| \ge t) \\
  \le \p\Bigl(\tr \exp(\lambda (Z - \E Z)) \ge \exp(\lambda t)\Bigr) + \p\Bigl( \tr \exp(-\lambda (Z - \E Z)) \ge \exp(\lambda t)\Bigr)\\
  \le e^{-\lambda t}\E \tr \exp(\lambda (Z - \E Z)) + e^{-\lambda t} \E \tr \exp(-\lambda (Z - \E Z)).
\end{multline*}
Thus, by Lemma \ref{le:av-max-independence}, if $0 < \lambda \le 1/(4K)$, then
\begin{displaymath}
  \p(\|Z - \E Z\| \ge t) \le 2e^{-\lambda t}\tr \exp(\lambda^2 K\E \sum_{i=1}^n X_i A_i^2) \le 2d \exp(K\sigma^2 \lambda^2 - \lambda t).
\end{displaymath}
If $t \le \sigma^2/2$, we substitute  $\lambda = \frac{t}{2K\sigma^2} \le \frac{1}{4K}$ and obtain
\begin{displaymath}
  \p(\|Z - \E Z\| \ge t) \le 2d \exp(-t^2/(4K\sigma^2)).
\end{displaymath}
If $t > \sigma^2/2$, then we substitute $\lambda = 1/(4K)$ and get
\begin{displaymath}
  \p(\|Z - \E Z\| \ge t) \le 2d \exp(\sigma^2/(16 K) - t/(4K)) \le 2d \exp(-t/(8K)).
\end{displaymath}
This ends the proof.
\end{proof}

\begin{proof}[Proof of Theorem \ref{cor:Bernstein-non-hom}]
Consider the homogenization of $X$, i.e., a $\{0,1\}^{2n}$-valued random vector $Y = (X_1,\ldots,X_n,1-X_1,\ldots,1-X_n)$. It is clearly $n$-homogeneous.
Moreover, in the proof of \cite[Corollary 1.4.]{kaufman2022scalar}, Kaufman, Kyng and Solda demonstrate that if $X$ is two-sided $\ell_\infty$-independent with constant $D$, then $Y$ is one-sided $\ell_\infty$-independent with constant $2D$. The corollary now follows immediately by Theorem \ref{thm:Bernstein} applied to $Y$ instead of $X$ and matrices $\widetilde{A}_i = A_i$ for $i\le n$, $\widetilde{A}_i = 0$ for $i = n+1,\ldots,2n$.
\end{proof}

\subsection{Proofs of results from Section \ref{sec:SCP}}

The proof will rely on a combination of the ideas leading to Theorem \ref{thm:Bernstein} with additional arguments, similar to those in the proof of \cite[Theorem 2.8]{MR4683375}, exploiting the SCP. Recall that by \cite[Proposition 3.2]{kaufman2022scalar}, every homogeneous SCP measure is two-sided $\ell_\infty$-independent with $D = 2$.

We will prove the following fact which is a counterpart of Lemma \ref{le:Bernstein-main-lemma}.
\begin{lemma}\label{le:SCP-main-lemma}
In the setting of Theorem \ref{thm:general-function-SCP}, assume that $R=1$. Then there exists an absolute constant $C$, such that for any $\lambda \in [\frac{-1}{4C},\frac{1}{4C}]$,

\begin{displaymath}
\E \tr \exp\bigl(\lambda(f(X) - \E f(X))\bigr) \le \tr \exp\bigl(\lambda^2 C \E \sum_{i=1}^n X_i A_i^2 \bigr).
\end{displaymath}
\end{lemma}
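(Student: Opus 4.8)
The plan is to mimic the inductive argument from the proof of Lemma \ref{le:Bernstein-main-lemma}, but replacing the linear expressions $\E(\sum_i X_iA_i \mid V)$ by conditional expectations of the general function $f$, and absorbing the resulting increments using the coupling guaranteed by the SCP. First I would set up the same sampling device: let $V$ be a random element of $[n]$ which, conditionally on $X$, is uniform on $\supp(X)$, so that \eqref{eq:law-of-V} and \eqref{eq:conditioning-on-V} hold and, conditionally on $V$, the vector $X_{\{V\}^c}$ is $(k-1)$-homogeneous with the SCP (the SCP is inherited by conditioning on a coordinate being $1$). I would then prove by induction on $k$ that for every deterministic $H \in \MS$,
\begin{displaymath}
  \E \tr \exp\bigl(H + \lambda f(X)\bigr) \le \tr \exp\Bigl(H + \lambda \E f(X) + \lambda^2 C\, \E \sum_{i=1}^n X_i A_i^2\Bigr),
\end{displaymath}
the base case $k=0$ being trivial since $f$ is then constant. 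Substituting $H = -\lambda \E f(X)$ at the end gives the lemma.

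For the inductive step, conditionally on $\{V=v\}$ I would like to write $f(X) = g_v(X_{\{v\}^c}) + (\text{increment})$, where $g_v(y) := \E\bigl(f(X)\mid V=v, X_{\{v\}^c}=y\bigr)$, but here $f(X)$ itself, evaluated on $\{V=v\}$, is a function of all of $X$, including a "direction-$v$" part that is frozen. The cleanest route is: conditionally on $V=v$, apply the induction hypothesis to the $(k-1)$-homogeneous SCP vector $X_{\{v\}^c}$ with the function $y \mapsto f(y^{(v)})$ where $y^{(v)}$ is $y$ with a $1$ inserted in coordinate $v$, and with the deterministic matrix $H$ already containing $H$ from the outer step. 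This produces, after taking $\E(\cdot\mid V)$ and then $\E$, a term $\lambda Z_V^f + \lambda^2 C \widehat Z_V^f$ analogous to $\lambda Z_V + K\lambda^2\widehat Z_V$ in Lemma \ref{le:Bernstein-main-lemma}, where now $Z_V^f = \E(f(X)\mid V) - \E f(X)$ and $\widehat Z_V^f$ is the corresponding object built from the $A_i^2$'s. Then, exactly as before, Golden--Thompson together with $\tr AB \le \tr A\|B\|$ reduces everything to showing
\begin{displaymath}
  \Bigl\| \E \exp\bigl(\lambda Z_V^f + \lambda^2 C\widehat Z_V^f - \lambda^2 C\, A_V^2\bigr) \Bigr\| \le 1,
\end{displaymath}
which by Lemma \ref{le:norm-bound} will follow once I establish the analogues of \eqref{eq:Z_v-norm-bound} and \eqref{eq:Z_v-square-bound}, namely $\|Z_V^f\|,\|\widehat Z_V^f\| \le 2$ and $\E (Z_V^f)^2 \preceq 4\,\E A_V^2$, $\E(\widehat Z_V^f)^2 \preceq 4\,\E A_V^2$ (using $A_i^4 \preceq A_i^2 \preceq \id$).

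The main obstacle — and the place where the SCP enters crucially, in contrast to Lemma \ref{le:Bernstein-main-lemma} where $\ell_\infty$-independence sufficed directly — is precisely verifying these norm and variance bounds for $Z_V^f$. Here I would argue as in \cite[Theorem 2.8]{MR4683375}: $Z_V^f$ on $\{V=v\}$ equals $\E f(X') - \E f(X)$ for $X'\sim \p(X\in\cdot\mid X_v=1)$, and by the SCP one can couple $X$ and $X'$ so that they differ in at most one coordinate with $X' \triangleright X$ (after accounting for the forced coordinate $v$), so that $\|f(X')-f(X)\|$ is controlled by a single $\|A_i\| \le 1$ along the coupling, giving the $O(1)$ bound; the variance bound then comes from the telescoping/martingale decomposition of $\E f(X\mid V) - \E f(X)$ along a sequential exposure of $\supp(X)$, combined with the increment hypothesis \eqref{eq:increment-assumption} and the operator-convexity Lemma \ref{le:op-conv}, producing $\sum_i(\cdots)A_i^2$ which averages against the law of $V$ from \eqref{eq:law-of-V} exactly as in the proof of Lemma \ref{le:Z_V-estimates}. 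Once the two bounds are in place, Lemma \ref{le:norm-bound} (applied with $D=2$) closes the induction and the proof goes through verbatim, with $C$ absorbing the factor coming from $D=2$.
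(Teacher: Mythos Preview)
Your overall skeleton is exactly what the paper does: induct on $k$, sample $V$ uniformly from $\supp(X)$, apply the inductive hypothesis conditionally on $V$, then use Golden--Thompson and reduce to Lemma \ref{le:norm-bound} with $D=2$ applied to $M_1 = Z_V^f$, $M_2 = \widehat Z_V$, $M_3 = A_V^2$. You also correctly identify that the only new work is to verify $\|Z_V^f\|\le 2$ and $\E (Z_V^f)^2 \preceq 4\,\E A_V^2$ (the corresponding bounds on $\widehat Z_V$ are already Lemma \ref{le:Z_V-estimates}, since $\widehat Z_V$ is built from the \emph{linear} function $\sum_i x_i A_i^2$).

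The gap is in your argument for the variance bound $\E (Z_V^f)^2 \preceq 4\,\E A_V^2$. You propose a ``telescoping/martingale decomposition along a sequential exposure of $\supp(X)$'' and say it proceeds ``exactly as in the proof of Lemma \ref{le:Z_V-estimates}''. But the proof of Lemma \ref{le:Z_V-estimates} relies entirely on writing $Z_V = \sum_i A_i\bigl(p_v(i)-p(i)\bigr)$ as an explicit linear combination, expanding the square, and bounding the cross terms via $\ell_\infty$-independence; for a nonlinear $f$ there is no such expansion, so this route does not apply. The paper instead uses the SCP coupling \emph{again} (not just for the norm bound): on $\{V=v\}$ one has $Z_V^f = \p(X_v=0)\,\E\bigl[f(Y^v)-f(Q^v)\bigr]$ where $Q^v = Y^v - e_v + e_{U_v}$ for a random index $U_v$, and a two-step telescope through $Y^v - e_v$ together with operator convexity of $x\mapsto x^2$ gives the pointwise bound $Z_V^2 \preceq 2\p(X_v=0)\bigl(A_v^2 + a_v\bigr)$ with $a_v := \E A_{U_v}^2$. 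The decisive trick, which your sketch does not contain, is the identity
\[
\sum_v \p(V=v)\p(X_v=0)\,A_v^2 \;=\; \sum_v \p(V=v)\p(X_v=0)\,a_v,
\]
obtained by applying the very same conditional-expectation representation to the linear function $g(x)=\sum_i x_i A_i^2$ and using $\E\widehat Z_V = 0$. This converts the uncontrolled $a_v$ back into $A_v^2$ after averaging over $V$ and delivers $\E Z_V^2 \preceq 4\,\E A_V^2$. Without this identity you would be stuck with $\E a_V$ on the right-hand side, which is not a priori dominated by $\E A_V^2$.
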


Similarly as in Section \ref{sec:proofs-Bernstein} let us introduce the random variables
\begin{align}\label{eq:definition-of-Z_V-SCP}
Z_V &= \E (f(X)|V) - \E f(X),\\
\widehat{Z}_V & = \E \Big(\sum_{i=1}^n X_i A_i^2|V\Big) - \E \sum_{i=1}^n X_i A_i^2.
\end{align}
where $V$ again is an $[n]$-valued random variable, which conditionally on $X$ is uniformly distributed on $\{i\in [n]\colon X_i = 1\}$.

We then have the following lemma.
\begin{lemma}\label{le:Z_V-estimates-SCP}
Under the assumptions of Lemma \ref{le:SCP-main-lemma}, there exists a universal constant $C$, such that for all $\lambda \in [-\frac{1}{4C},\frac{1}{4C}]$,
\begin{align}\label{eq:Z_v-estimates-SCP}
\Big\| \E \exp\Big(\lambda Z_V + C\lambda^2 \widehat{Z}_V - C\lambda^2 A_V^2\Big)\Big\| \le 1.
\end{align}
\end{lemma}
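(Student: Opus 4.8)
The plan is to deduce the estimate from Lemma~\ref{le:norm-bound}, applied with $M_1 = Z_V$, $M_2 = \widehat{Z}_V$ and $M_3 = A_V^2$. Two of its hypotheses are immediate: $\E M_1 = \E M_2 = 0$ by the tower property of conditional expectation, while $0 \preceq A_V^2 \preceq \|A_V\|^2\id \preceq R^2\id = \id$ since $R = 1$. Because $X$ is $k$-homogeneous and has the SCP, by \cite[Proposition~3.2]{kaufman2022scalar} it is two-sided, hence one-sided, $\ell_\infty$-independent with $D = 2$; thus $\widehat{Z}_V$ is precisely the random variable considered in Lemma~\ref{le:Z_V-estimates} for the matrices $A_i$, and that lemma gives $\|\widehat{Z}_V\| \le 2$ and, using $A_V^4 \preceq A_V^2$ (a consequence of $\|A_V\| \le 1$), $\E\widehat{Z}_V^2 \preceq 4\E A_V^4 \preceq 4\E A_V^2$. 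It therefore suffices to show that $\|Z_V\| \le 2$ and $\E Z_V^2 \preceq 6\E A_V^2$; granting this, Lemma~\ref{le:norm-bound} applies with $D = \sqrt 6$ and $C$ (in the role of $K$) any sufficiently large universal constant, the endpoints $\lambda = \pm\tfrac{1}{4C}$ being covered by continuity of $\lambda \mapsto \|\E\exp(\lambda Z_V + C\lambda^2\widehat Z_V - C\lambda^2 A_V^2)\|$. The bound on $\E Z_V^2$ will be the main obstacle.

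To control $Z_V$ I realize $V$ through a coupling built from the SCP, in the spirit of \cite[Theorem~2.8]{MR4683375}. Write $p_v = \p(X_v = 1)$. For each $v$ with $0 < p_v < 1$, applying the SCP with $\Lambda = \{v\}$ (comparing the configurations $X_v = 1$ and $X_v = 0$) yields a coupling $(Y^{(v)}, Q^{(v)})$ of $\p(X_{\{v\}^c}\in\cdot\mid X_v = 0)$ and $\p(X_{\{v\}^c}\in\cdot\mid X_v = 1)$ with $Y^{(v)} \triangleright Q^{(v)}$ almost surely; by $k$-homogeneity these laws are $k$- and $(k-1)$-homogeneous, so necessarily $Y^{(v)} = Q^{(v)} + e_J$ for an a.s.\ unique index $J = J(v) \in \{v\}^c$, whence $Q^{(v)}_j \le Y^{(v)}_j$ for all $j$. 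Now sample $V$ with $\p(V = v) = p_v/k$, independently of the couplings, and conditionally on $\{V = v\}$ set $X_v = 1$, $X_{\{v\}^c} = Q^{(v)}$, $X'_v = 0$, $X'_{\{v\}^c} = Y^{(v)}$ (and $X' = X$ when $p_v \in \{0,1\}$). A short computation using $k$-homogeneity shows that $X \sim \mu_X$, that conditionally on $X$ the variable $V$ is uniform on $\supp(X)$ (so $(V,X)$ has the law required in the lemma), and that $X' = X \oplus e_V \oplus e_J$. Since $\E f(X) = p_v\E(f(X)\mid X_v = 1) + (1 - p_v)\E(f(X)\mid X_v = 0)$, on $\{V = v\}$ one obtains
\begin{displaymath}
  Z_V = (1 - p_V)\,\E\big(f(X) - f(X')\mid V\big),
\end{displaymath}
and this identity holds trivially when $p_v \in \{0,1\}$.

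From the telescoping $f(X) - f(X') = \big(f(X) - f(X \oplus e_V)\big) + \big(f(X \oplus e_V) - f(X \oplus e_V \oplus e_J)\big)$ and the increment assumption \eqref{eq:increment-assumption} I get $\|f(X) - f(X')\| \le \|A_V\| + \|A_J\| \le 2R = 2$, hence $\|Z_V\| \le 2$ by conditional Jensen. For the second moment, the operator Jensen inequality together with Lemma~\ref{le:op-conv} and \eqref{eq:increment-assumption} gives
\begin{displaymath}
  Z_V^2 = (1 - p_V)^2\Big(\E\big(f(X) - f(X')\mid V\big)\Big)^2 \preceq (1 - p_V)^2\,\E\big((f(X) - f(X'))^2\mid V\big) \preceq 2(1 - p_V)^2\big(A_V^2 + \E(A_J^2\mid V)\big).
\end{displaymath}
Taking expectations, the term $2\E\big((1 - p_V)^2 A_V^2\big)$ is $\preceq 2\E A_V^2$, so it remains to bound $\E\big((1 - p_V)^2 A_J^2\big)$.

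On $\{V = v\}$ one has $\{J = j\} = \{Y^{(v)}_j = 1,\ Q^{(v)}_j = 0\}$, and since $Q^{(v)}_j \le Y^{(v)}_j$ this gives, for $j \ne v$,
\begin{displaymath}
  \p(J = j\mid V = v) = \p(X_j = 1\mid X_v = 0) - \p(X_j = 1\mid X_v = 1) = \frac{p_j - \p(X_j = 1\mid X_v = 1)}{1 - p_v},
\end{displaymath}
the last equality from $p_j = p_v\p(X_j = 1\mid X_v = 1) + (1 - p_v)\p(X_j = 1\mid X_v = 0)$. Consequently
\begin{displaymath}
  \E\big((1 - p_V)^2 A_J^2\big) = \sum_j A_j^2 \sum_v \frac{p_v}{k}(1 - p_v)^2\cdot\frac{p_j - \p(X_j = 1\mid X_v = 1)}{1 - p_v} = \frac1k\sum_j A_j^2\sum_v p_v(1 - p_v)\big(p_j - \p(X_j = 1\mid X_v = 1)\big).
\end{displaymath}
Since $p_j - \p(X_j = 1\mid X_v = 1) = (1 - p_v)\p(J = j\mid V = v) \ge 0$ and $1 - p_v \le 1$, the inner sum is at most $\sum_v p_v\,|p_j - \p(X_j = 1\mid X_v = 1)| \le 2p_j$ by Lemma~\ref{le:av-max-independence} (recall $X$ is $k$-homogeneous and $\ell_\infty$-independent with $D = 2$). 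Hence $\E\big((1 - p_V)^2 A_J^2\big) \preceq 2\sum_j A_j^2\,\tfrac{p_j}{k} = 2\E A_V^2$, so $\E Z_V^2 \preceq 6\E A_V^2$, which completes the verification of the hypotheses of Lemma~\ref{le:norm-bound} and hence the proof. As anticipated, the crux is the cancellation between the factor $(1 - p_V)^2$ produced by the identity $Z_V = (1 - p_V)\E(\cdot\mid V)$ and the operator Jensen step, and the singular factor $(1 - p_v)^{-1}$ in $\p(J = j\mid V = v)$; this is exactly what makes Lemma~\ref{le:av-max-independence} applicable.
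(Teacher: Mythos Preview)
Your proof is correct and follows the same overall strategy as the paper: reduce to Lemma~\ref{le:norm-bound} by realizing $Z_V$ through the SCP coupling, write $Z_V = (1-p_V)\E(f(X)-f(X')\mid V)$ with $X' = X\oplus e_V\oplus e_J$, apply operator Jensen and Lemma~\ref{le:op-conv} to get $Z_V^2 \preceq 2(1-p_V)^2\bigl(A_V^2 + \E(A_J^2\mid V)\bigr)$, and then control the cross term. The only difference lies in this last step. You compute $\p(J=j\mid V=v) = (p_j - \p(X_j=1\mid X_v=1))/(1-p_v)$ explicitly and invoke Lemma~\ref{le:av-max-independence} (via the SCP $\Rightarrow$ $\ell_\infty$-independence implication) to bound $\E\bigl((1-p_V)^2 A_J^2\bigr)\preceq 2\E A_V^2$, arriving at $\E Z_V^2 \preceq 6\E A_V^2$. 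The paper instead applies the very same coupling identity to the linear function $g(x)=\sum_i x_i A_i^2$, which yields the exact relation $\sum_v \p(V=v)(1-p_v)A_v^2 = \sum_v \p(V=v)(1-p_v)\,\E A_{J}^2$ and hence $\E Z_V^2 \preceq 4\E A_V^2$. The paper's trick is a bit sharper and avoids calling on $\ell_\infty$-independence for the $Z_V$ bound, but both routes feed a universal $D$ into Lemma~\ref{le:norm-bound}, so the conclusions are equivalent.
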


\begin{proof}
Observe that by Lemma \ref{le:Z_V-estimates}, we have
\begin{align}\label{eq:Z_v-square-bound-SCP}
\|\widehat{Z}_V\|\le 2\textrm{ and } \E \widehat{Z}_V^2 \preceq 4 \E A_V^4\preceq 4\E A_V^2.
\end{align}

We will prove that also
\begin{align}\label{eq:Z_V-bound-SCP}
  \|Z_V\| \le 2, \textrm{ and } \E Z_V^2 \preceq 4 \E A_V^2.
\end{align}

By the SCP assumption, there exists a coupling $(Q^v,Y^v)$ of $\p(X \in \cdot |X_{v}=0)$ and $\p(X \in \cdot |X_{v}=1)$ such that $(Q^v_i)_{i\in [v]^c} \triangleright (Y^v_i)_{i\in [v]^c}$. It follows from $k$-homogeneity that there exists a random variable $U_v$ with values in $[v]^c$, such that $Q^v = Y^v - e_v +e_{U_v}$.

We have
\begin{displaymath}
\E f(X) = \E (f(X) | X_{v}=1)(1-\p(X_{v}=0)) + \E(f(X) | X_{v}=0)\p(X_{v}=0).
\end{displaymath}
Moreover, by \eqref{eq:conditioning-on-V}, on the event $\{V = v\}$ we have $\p(X \in \cdot |V) = \p(X \in \cdot |X_v = 1)$ and so
$\E(f(X)|V=v) = \E(f(X)|X_v=1)$. Combining this with the inequality above, we get

\begin{multline}\label{eq:conditonal-expectation}
    \E (f(X) | V=v) - \E f(X) = \p(X_{v}=0) \bigl( \E (f(X) | X_{v}=1) - \E (f(X) | X_{v}=0)\bigr) \\
    = \p(X_{v}=0) \bigl( \E (f(Y^v) - f(Q^v)) \bigr).
\end{multline}

Denote $a_v = \E A_{U_v}^2$. As a consequence of the above equality, using twice the operator convexity of $x\mapsto x^2$ (see, e.g., \cite[Example V.1.3]{MR1477662}) and the assumption \eqref{eq:increment-assumption} , we get that on $\{V = v\}$,

\begin{multline}\label{eq:pointwise-bound-on-Z_V}
  Z_V^2 = (\E (f(X) | V=v) - \E f(X))^2 \\
  = \p(X_v = 0)^2  \Big( \bigl(\E (f(Y^v) -  f(Y^v-e_v))\bigr) + \bigl(\E (f(Y^v - e_v) - f(Y^v - e_v + e_{U_v}))\bigr) \Big)^2 \\
  \preceq 2\p(X_v = 0)^2  \Big[\Big( \E (f(Y^v) - f(Y^v-e_v))\Big)^2 + \Big(\E (f(Y^v - e_v) + f(Y^v - e_v + e_{U_v}))\Big)^2 \Big]\\
  \preceq 2\p(X_v = 0)^2 \Big[ \E \bigl(f(Y^v) - f(Y^v-e_v)\bigr)^2 + \E \bigl(f(Y^v - e_v) + f(Y^v - e_v + e_{U_v})\bigl)^2\Big]\\
  \preceq 2\p(X_v = 0)^2 (A_v^2 + \E A_{U_v}^2) = 2\p(X_v = 0)^2 (A_v^2 + a_v)\\
  \preceq 2\p(X_v = 0)(A_v^2 + a_v).
\end{multline}

Applying \eqref{eq:conditonal-expectation} to the function $g\colon \{0,1\}^n\to \MS$ given by $g(x) = \sum_{i=1}^n x_i A_i^2$ instead of $f$, we get
\begin{multline*}
\E \Big(\sum_{i=1}^nX_iA_i^{2}|V=v\Big) - \E \Big(\sum_{i=1}^nX_iA_i^{2}\Big)= \p(X_{v}=0)\E \sum_{i=1}^n A_i^2 (Y^v_i - Q^v_i) \\
= \p(X_{v}=0)\E (A_v^2 - A_{U_v}^2\bigr) = \p(X_{v}=0)\bigl(A_v^2 - a_v\bigr).
\end{multline*}

Integrating this equality with respect to the distribution of $V$ we obtain
\begin{displaymath}
  \sum_{v=1}^n \p(V=v)\p(X_v=0)A_v^2 =   \sum_{v=1}^n \p(V=v)\p(X_v=0)a_v.
\end{displaymath}

Going back to \eqref{eq:pointwise-bound-on-Z_V}, we get,
\begin{multline*}
  \E Z_V^2 \preceq  \sum_{v=1}^n 2 \p(V = v) \p(X_v = 0)(A_v^2 + a_v)\\
   = 4 \E \sum_{v=1}^n \p(V=v)\p(X_v=0)A_v^2 \preceq 4 \E A_V^2.
\end{multline*}

This implies the second estimate of \eqref{eq:Z_V-bound-SCP}. The first one follows from \eqref{eq:pointwise-bound-on-Z_V} and the observation that $\|A_v\|, \|a_v\| \le 1$.

Having \eqref{eq:pointwise-bound-on-Z_V} and \eqref{eq:Z_V-bound-SCP}, to obtain \eqref{eq:Z_v-estimates-SCP} it is enough to observe that $\E Z_V = \E \widehat{Z}_V = 0$ and apply Lemma \ref{le:norm-bound} with $D= 2$ and $M_1 = Z_V$, $M_2 = \widehat{Z}_V$, $M_3 = A_V^2$.
\end{proof}

We are now ready for

\begin{proof}[Proof of Lemma \ref{le:SCP-main-lemma}]
The argument is very similar to that in the proof of Lemma \ref{le:Bernstein-main-lemma}. It requires only notational changes and an application of Lemma \ref{le:Z_V-estimates-SCP}. We present the details for completeness.

We will show by induction on $k$ that for every deterministic matrix $H$,
\begin{align}\label{eq:trace-induction-SCP}
  \E \tr \exp(H + \lambda f(Z)) \le \tr \exp\Big(H + \lambda \E f(Z) + C \lambda^2\E \sum_{i=1}^n X_i A_i^2\Big).
\end{align}
For $k = 0$ the statement is trivial as both sides of \eqref{eq:trace-induction-SCP} equal $\tr e^{H + f(0)}$. Let us thus assume that $k \ge 1$ and the statement holds for $(k-1)$.
Let $V$ be the random element of $[n]$ defined above. Conditionally on $\{V=v\}$, $X_{\{v\}^c}$ is a $(k-1)$-homogeneous measure with the SCP and so it is $\ell_\infty$-independent with parameter $D=2$ . Thus, using the induction assumption, we get
\begin{align*}
  \E \tr &\exp(H + \lambda f(X))  = \E \E \Big(\tr \exp(H + \lambda f(X)|V\Big)  \\
  & \le \E \tr \exp\Big(H + \lambda \E (f(X)|V) + C\lambda^2 \E (\sum_{i\neq V} X_i A_i^2|V)\Big)\\
  & = \E \tr \exp\Big(H + \lambda \E f(X) + \lambda Z_V + C\lambda^2 \E (\sum_{i= 1}^n X_i A_i^2|V) - C\lambda^2 A_V^2\Big)\\
  & = \E \tr \exp\Big(H + \lambda \E f(X) + C \lambda^2\E \sum_{i=1}^n X_i A_i^2 + \lambda Z_V + C\lambda^2 \widehat{Z}_V  - C\lambda^2 A_V^2\Big),
\end{align*}
where in the second equality we used the fact that by construction $X_V = 1$.

Using the Golden-Thompson inequality and inequality \eqref{eq:Z_v-estimates-SCP} of Lemma \ref{le:Z_V-estimates-SCP}, we further estimate
\begin{multline*}
\E \tr \exp(H + \lambda f(X))  \\
\le \E \tr \exp\Big(H + \lambda \E f(X) + C \lambda^2\E \sum_{i=1}^n X_i A_i^2\Big) \exp\Big(\lambda Z_V + C\lambda^2 \widehat{Z}_V - C\lambda^2A_V^2\Big)\\
=\tr \exp\Big(H + \lambda \E f(X) + C \lambda^2\E \sum_{i=1}^n X_i A_i^2\Big) \E \exp\Big(\lambda Z_V + C\lambda^2 \widehat{Z}_V-C\lambda^2A_V^2\Big)\\
\le \tr \exp\Big(H + \lambda \E f(X) + C \lambda^2\E \sum_{i=1}^n X_i A_i^2\Big)\Big\| \E \exp\Big(\lambda Z_V + C\lambda^2 \widehat{Z}_V - C\lambda^2 A_V^2\Big)\Big\|\\
\le \tr \exp\Big(H + \lambda \E f(X) + C \lambda^2\E \sum_{i=1}^n X_i A_i^2\Big),
\end{multline*}
which ends the proof of the induction step. The proof of the lemma is completed by substituting $H = -\lambda \E Z$.
\end{proof}

\begin{proof}[Proof of Theorem \ref{thm:general-function-SCP}]The theorem follows from Lemma \ref{le:SCP-main-lemma} in exactly the same way as Theorem \ref{thm:Bernstein} from Lemma \ref{le:Bernstein-main-lemma} (one formally replaces $Z$ with $f(X)$ and $K$ with $C$).
\end{proof}

\subsection{Proofs of results from Section \ref{sec:decoupling}}

Let us start with the proof of the decoupling inequality.

\begin{proof}[Proof of Theorem \ref{thm:decoupling}] Consider binary random variables $\delta_i$, $i\in [n]$, such that
\begin{displaymath}
    \p(\delta_i = 1|X) = \frac{1}{2}X_i
\end{displaymath}
and $\delta_i$ are conditionally independent given $X$.

Using the fact that $c_{ii} = 0$, together with conditional independence and the equality $\E(\delta_i|X) = \frac{1}{2}X_i$, we have
\begin{multline*}
\E\Big(\sum_{i,j=1}^n c_{ij}X_i X_j \delta_i (1-\delta_j)\Big|X\Big) =   \sum_{i,j=1}^n c_{ij} X_i X_j \E(\delta_i|X)\E(1-\delta_j|X) \\
= \sum_{i,j=1}^n \frac{1}{2}c_{ij}X_i^2X_j(1-\frac{1}{2}X_j) = \frac{1}{4}\sum_{i,j=1}^n c_{ij} X_i X_j,
\end{multline*}
where in the last equality we used the fact that $X_i$'s take only values $0$ and $1$.
Conditionally on $X$, the random variable
\begin{displaymath}
    Z = \Big\|\sum_{i,j=1}^n c_{ij}X_i X_j \delta_i (1-\delta_j) \Big\|
\end{displaymath}
is an $E$-valued tetrahedral polynomial of degree 2 in independent Rademacher variables $\varepsilon_i = 2\delta_i - 1$, $i \in \supp(X)$. Therefore, by \cite[Theorem 3.2.5]{MR1666908},
there exists a universal constant $K$, such that
\begin{displaymath}
    (\E(Z^2|X))^{1/2} \le K\E (Z|X).
\end{displaymath}
It follows by the Paley--Zygmund inequality (see, e.g., \cite[Corollary 3.3.2]{MR1666908}) that
\begin{displaymath}
    \p\Big(Z \ge 2^{-1}\E(Z|X)\Big|X\Big) \ge \frac{1}{4} \frac{(\E(Z|X))^2}{\E(Z^2|X)} \ge \frac{1}{4K^2}.
\end{displaymath}
Since by Jensen's inequality
\begin{displaymath}
    \E(Z|X) \ge  \Big \|\E(\sum_{i,j=1}^n c_{ij}X_i X_j \delta_i (1-\delta_j)|X)\Big\| = \frac{1}{4}\Big\|\sum_{i,j=1}^n c_{ij} X_i X_j\Big\|,
\end{displaymath}
we thus get
\begin{multline}\label{eq:decoupling-first-estimate}
    \p(Z \ge t) = \E \p(Z \ge t|X) \ge \E \p(Z \ge t|X)\1_{\{\|\sum_{i,j=1}^n c_{ij} X_i X_j\| \ge 8t\}} \\
    \ge \E \p\Big(Z \ge 2^{-1}\E(Z|X)\Big|X\Big)\1_{\{\|\sum_{i,j=1}^n c_{ij} X_i X_j\| \ge 8t\}} \\
    \ge \frac{1}{4K^2} \p\Big(\Big\|\sum_{i,j=1}^n c_{ij} X_i X_j\Big\| \ge 8t\Big).
\end{multline}

Let $\sigma(X)$ be the $\sigma$-field generated by $X$. Consider the random set $I = \{i\in [n]\colon \delta_i = 1\}$. Note that for any non-empty $J \subseteq [n]$, such that $\p(I = J) > 0$, and any event $A \in \sigma(X)$,
\begin{align}\label{eq:conditioning}
    \p(A|I = J) = \p(A|\forall_{i\in J} X_i = 1).
\end{align}
Indeed, it is enough to consider $A$ being atoms of $\sigma(X)$, i.e., sets of the form
\begin{displaymath}
A = \{X_1 = x_1,\ldots,X_n=x_n\}
\end{displaymath}
for some $x_1,\ldots,x_n \in \{0,1\}$, exactly $k$ of which are equal to 1. Moreover, since $\p(\delta_i = 1|X_i = 0) = 0$, both sides of \eqref{eq:conditioning} vanish if for some $j \in J$, $x_j = 0$. Thus, to prove \eqref{eq:conditioning}, it is enough to show that for every $x_j \in \{0,1\}$, $j \in J^c$, with $k - |J|$ ones
\begin{align*}
\p(\forall_{j \in J^c}  X_j = x_j\textrm{ and } \forall_{j \in J} X_j = 1 |I = J) = \p(\forall_{j \in J^c} X_j = x_j | \forall_{j \in J} X_j = 1 ).
\end{align*}

Using the definition of conditional probability and rearranging, we obtain that the above equality is equivalent to
\begin{multline*}
    \p(\forall_{j \in J^c} X_j = x_j\textrm{ and } \forall_{j \in J} X_j = 1 \textrm{ and } I = J)\p(\forall_{j \in J} X_j = 1 )\\
    =  \p(\forall_{j \in J^c} X_j = x_j\textrm{  and } \forall_{j \in J} X_j = 1 )\p(I = J).
\end{multline*}
Using homogeneity and the definition of $\delta_i$, we obtain that the left-hand side above equals
\begin{displaymath}
\p(\forall_{j \in J^c} X_j = x_j\textrm{  and } \forall_{j \in J} X_j = 1 )2^{-k} \p(\forall_{j \in J} X_j = 1 )
\end{displaymath}
and so does the right hand side, since similarly
\begin{displaymath}
\p(I = J) = \sum_{K\supseteq J, |K| = k} \p(\forall_{j \in K} X_j = 1)2^{-k} = \p(\forall_{j \in J} X_j = 1 )2^{-k}.
\end{displaymath}
This establishes \eqref{eq:conditioning}.

Now, on the event $I = J$ we have
\begin{displaymath}
    Z = \Big\|\sum_{i\in J,j\in J^c} c_{ij}X_i X_j\Big\| = \Big\|\sum_{i\in J,j\in J^c} c_{ij} X_j\Big\|
\end{displaymath}
 and so we can write for $t > 0$,
\begin{multline}\label{eq:preparation-for-NA}
    \p(Z \ge t) = \sum_{\stackrel{\emptyset \neq J \subseteq [n]}{|J| \le k}}\p(Z \ge t|I = J) \p(I = J)\\
= \sum_{\stackrel{\emptyset \neq J \subseteq [n]}{|J| \le  k}}\p\Big( \Big\|\sum_{i\in J,j\in J^c} c_{ij}X_j\Big\|\ge t\Big|I = J\Big) \p(I = J)\\
= \sum_{\stackrel{\emptyset \neq J \subseteq [n]}{|J| \le  k}}\p\Big( \Big\|\sum_{i\in J,j\in J^c} c_{ij} X_j\Big\|\ge t\Big |\forall_{j \in J} X_j = 1 \Big)\p(I = J),
\end{multline}
where in the last equality we used \eqref{eq:conditioning} and the fact that the event appearing there belongs to $\sigma(X)$.

Observe that the random variables
$\1_{\{\|\sum_{i\in J,j\in J^c} c_{ij}X_j\| \ge t\}}$ and $\1_{\{\forall_{j \in J} X_j = 1 \}}$ are non-decreasing functions of $X_{J^c}$ and $X_J$ respectively (we use here the assumed monotonicity of the bilinear form $f$). Thus, by negative association,
\begin{displaymath}
  \E \1_{\{\|\sum_{i\in J,j\in J^c} c_{ij}X_j\| \ge t\}}\1_{\{\forall_{j \in J} X_j = 1 \}} \le \E \1_{\{\|\sum_{i\in J,j\in J^c} c_{ij}X_j\| \ge t\}} \E \1_{\{\forall_{j \in J} X_j = 1 \}}
\end{displaymath}

Let now $Y$ be a copy of $X$, independent of $X$ and $\delta_i$'s. For any $J$, such that $\p(X_j = 1 \textrm{ for all $j \in J $}) > 0$ (equivalently $\p(I = J) > 0$), we have
\begin{multline*}
    \p\Big(\Big\|\sum_{i\in J,j\in J^c} c_{ij}X_j\Big\| \ge t\Big|\forall_{j \in J} X_j = 1 \Big) \\
    = \frac{\E \1_{\{\|\sum_{i\in J,j\in J^c} c_{ij}X_j\| \ge t\}}\1_{\{\forall_{j \in J} X_j = 1 \}}}{\E \1_{\{\forall_{j \in J} X_j = 1 \}}} \\
    \le \E \1_{\{\|\sum_{i\in J,j\in J^c} c_{ij}X_j\| \ge t\}} = \p\Big(\Big\|\sum_{i\in J,j\in J^c} c_{ij}X_j\Big\| \ge t\Big)\\
    = \p\Big(\Big\|\sum_{i\in J,j\in J^c} c_{ij}Y_j\Big\| \ge t\Big) \le
    \p\Big(\Big\|\sum_{i\in J,1\le j\le n} c_{ij}Y_j\Big\| \ge t\Big),
\end{multline*}
where in the last inequality we again used the monotonicity of $f$.
Thus, by \eqref{eq:preparation-for-NA},
\begin{multline*}
    \p(Z \ge t) \le \sum_{\stackrel{\emptyset \neq J \subseteq [n]}{|J| \le k}} \p\Big( \Big\|\sum_{i\in J,1\le j\le n} c_{ij} Y_j\Big\|\ge t\Big)\p(I = J)\\
    = \sum_{\stackrel{\emptyset \neq J \subseteq [n]}{|J| \le k}} \p\Big( \Big\|\sum_{i\in J,1\le j\le n} c_{ij} Y_j\Big\|\ge t\textrm{ and } I = J\Big)\\
    = \sum_{\stackrel{\emptyset \neq J \subseteq [n]}{|J| \le k}} \p\Big( \Big\|\sum_{i\in J,1\le j\le n} c_{ij} X_i Y_j\Big\|\ge t\textrm{ and } I = J\Big)\\ \le \sum_{\stackrel{\emptyset \neq J \subseteq [n]}{|J| \le k}} \p\Big( \Big\|\sum_{i,j=1}^n c_{ij} X_i Y_j\Big\|\ge t\textrm{ and } I = J\Big)
    \le \p\Big( \Big\|\sum_{i,j=1}^n c_{ij} X_i Y_j\Big\|\ge t\Big),
\end{multline*}
where in the first equality we used  the independence of $Y$ and $\delta$, in the second one the fact that if $I = J$, then $X_i = 1$ for $i \in J$, finally in the second inequality we used once more the monotonicity of $f$.

Going back to \eqref{eq:decoupling-first-estimate}, we obtain
\begin{displaymath}
  \p\Big(\Big\|\sum_{i,j=1}^n c_{ij} X_i X_j\Big\| \ge 8t\Big) \le 4K^2 \p\Big( \Big\|\sum_{i,j=1}^n c_{ij} X_i Y_j\Big\|\ge t\Big),
\end{displaymath}
which ends the proof.
\end{proof}

We will now prove Theorem \ref{thm:matrix-sampling}
\begin{proof}[Proof of Theorem \ref{thm:matrix-sampling}]
The norm $\|H_X\|$ can be written as
\begin{displaymath}
  \|H_X\| = \Big\|\sum_{1\le i\neq j\le d} e_ie_j^T h_{ij} X_i X_j\Big\|
\end{displaymath}
and it is easy to see that the function
\begin{displaymath}
\{0,1\}^d\times \{0,1\}^d\ni (x,y) \mapsto \Big\|\sum_{1\le i\neq j\le d} e_ie_j^T h_{ij} x_i y_j\Big\|
\end{displaymath}
is coordinate-wise non-decreasing. Therefore, by Corollary \ref{cor:decoupling-Rayleigh}, for $Y$ -- an independent copy of $X$, we have
\begin{align}\label{eq:decoupling-submatrix}
  \p(\|H_X\| \ge t) \le C\p(\|\Lambda_X H\Lambda_Y\| \ge t/C),
\end{align}
where $\Lambda_Y = Diag(Y_1,\ldots,Y_d)$.

We may and will assume that $X,Y$ are defined as coordinates on a product probability space. To simplify the notation for conditioning, we will from now on denote by $\p_X,\p_Y$ the probability with respect to the $X$, resp. $Y$ coordinate on this space, with the other coordinate fixed, i.e., the conditional probability given $Y$, resp. $X$. An analogous standard convention will be used for $\E_X, \E_Y$.

Let us first bound the conditional probability $\p_Y(\|\Lambda_X H \Lambda_Y\|\ge t)$. Denote the columns of $\Lambda_X H$ by $W_i$. Note that they are random vectors measurable with respect to $X$ and thus independent of $Y$. Using the fact that $\Lambda_Y \Lambda_Y^T = \Lambda_Y^2 = \Lambda_Y$, we get
\begin{displaymath}
  Z := \Lambda_X H \Lambda_Y (\Lambda_X H \Lambda_Y)^T = \sum_{i=1}^d Y_i W_i W_i^T.
\end{displaymath}
Recall that for any square matrix $A$, $\|A\|^2 = \|AA^T\| = \|A^TA\|$.
Thus, applying Corollary \ref{cor:KKS-bound} to $Y$, conditionally on $X$, we obtain
\begin{multline}\label{eq:P_Y-bound}
  \p_Y(\|\Lambda_X H \Lambda_Y\| \ge \sqrt{\|\E_Y Z\|  + t^2}) = \p_Y( \|Z\| \ge \|\E_Y Z\| + t^2) \\
  \le   2d\exp\Big(-\frac{1}{C}\min\Big(\frac{t^4}{\|\E_Y Z\| \max_{i\le d} |W_i|^2},\frac{t^2}{\max_{i\le d} |W_i|^2}\Big)\Big).
\end{multline}
Now, we observe that
\begin{displaymath}
  \E_Y Z = \sum_{i=1}^d p_i W_i W_i^T = \Lambda_X H \sqrt{P}  (\Lambda_X H\sqrt{P})^T.
\end{displaymath}
Therefore,
\begin{displaymath}
  \|\E_Y Z\| =   \|(\Lambda_X H\sqrt{P})^T \Lambda_X H \sqrt{P}\| = \Big\|\sum_{i=1}^d X_i U_iU_i^T\Big\|,
\end{displaymath}
where $U_i$ is the $i$-the column of $\sqrt{P} H^T$. Moreover, we have
\begin{displaymath}
  \Big\| \E \sum_{i=1}^d X_i U_iU_i^T \Big\|= \| (\sqrt{P} H^T \sqrt{P})(\sqrt{P} H\sqrt{P})\| = \|\sqrt{P}H\sqrt{P}\|^2
\end{displaymath}
Applying again Corollary \ref{cor:KKS-bound}, we get
\begin{multline}\label{eq:bound-on-E_Y}
  \p\Big(\|\E_Y Z\| \ge \|\sqrt{P}H\sqrt{P}\|^2 + t^2\Big) \\
  \le 2d\exp\Big(-\frac{1}{C}\min\Big(\frac{t^4}{\|\sqrt{P}H\sqrt{P}\|^2 \max_{i\le d} |U_i|^2},\frac{t^2}{\max_{i\le d} |U_i|^2}\Big)\Big)\\
  = 2d\exp\Big(-\frac{1}{C}\min\Big(\frac{t^4}{\|\sqrt{P}H\sqrt{P}\|^2 \|H\sqrt{P}\|_{\ell_2\to \ell_\infty}^2},\frac{t^2}{\|H\sqrt{P}\|_{\ell_2\to \ell_\infty}^2}\Big)\Big).
\end{multline}

The last ingredient we need is a deviation bound on $\max_{i\le n}|W_i|$. For a fixed index $i$, we get
\begin{displaymath}
  |W_i|^2 = \sum_{j=1}^d X_i h_{ij}^2
\end{displaymath}
and, denoting by $H_i$ the $i$-th column of $H$,
\begin{displaymath}
\E \sum_{j=1}^d X_i h_{ij}^2 = |\sqrt{P} H_i|^2,
\end{displaymath}
which by an application of Corollary \ref{cor:KKS-bound} in dimension 1, gives
\begin{displaymath}
  \p(|W_i|^2 \ge |\sqrt{P} H_i|^2 + t\max_{j\le d}|h_{ij}|)\le 2\exp\Big(-\frac{1}{C}\min\Big(\frac{t^2}{|\sqrt{P} H_i|^2},\frac{t}{\max_{j\le d} |h_{ij}|}\Big)\Big).
\end{displaymath}
Taking the union bound over all $i \le d$, we obtain
\begin{multline}\label{eq:bound-on_W}
\p(\max_{i\le d} |W_i|^2 \ge \|\sqrt{P} H\|_{\ell_1 \to \ell_2}^2 + t\|H\|_{\ell_1\to \ell_\infty}) \\
\le 2d\exp\Big(-\frac{1}{C}\min\Big(\frac{t^2}{\|\sqrt{P} H\|_{\ell_1\to \ell_2}^2 },\frac{t}{\|H\|_{\ell_1\to \ell_\infty}}\Big)\Big).
\end{multline}

Assume now that $t \ge \|\sqrt{P}H\sqrt{P}\|$. Let
\begin{displaymath}
\mathcal{A} = \{\|\E_Y Z\| < 2t^2, \max_{i\le d} |W_i|^2 \le \|\sqrt{P} H\|_{\ell_1\to \ell_2}^2 + t\|H\|_{\ell_1\to \ell_\infty}\}.
\end{displaymath}
Using the union bound together with the Fubini theorem, we can write
\begin{align*}
  \p\Bigl(&\|\Lambda_X H \Lambda_Y\| \ge \sqrt{3}t\Bigr) \\
   \le& \p\Bigl(\Bigl\{\|\Lambda_X H \Lambda_Y\| \ge \sqrt{3t^2}\Bigr\}\cap \mathcal{A}\Bigl)
  +\p\Bigl(\|\E_Y Z\| > 2t^2\Bigr) \\
  & \phantom{aaaaaaaaa}+ \p\Bigl(\max_{i\le d} |W_i|^2 \ge \|\sqrt{P} H\|_{\ell_1 \to \ell_2}^2 + t\|H\|_{\ell_1\to \ell_\infty}\Bigr)\\
  \le&
  \E_X \1_{\mathcal{A}} \p_Y\Bigl(\|\Lambda_X H \Lambda_Y\| \ge \sqrt{\|\E_Y Z\| + t^2}\Bigr) \\
  &\phantom{aaaaaaaaa}+ \p\Bigl(\|\E_Y Z\| > \|\sqrt{P}H\sqrt{P}\| + t^2 \Bigr) \\
  &\phantom{aaaaaaaaaaaaaa}+ \p\Bigl(\max_{i\le d} |W_i|^2 \ge \|\sqrt{P} H\|_{\ell_1 \to \ell_2}^2 + t\|H\|_{\ell_1\to \ell_\infty}\Bigr).
\end{align*}

Using \eqref{eq:P_Y-bound}, \eqref{eq:bound-on-E_Y} and \eqref{eq:bound-on_W} to bound from above the last three probabilities, we obtain
\begin{align*}
\p\Bigl(&\|\Lambda_X H \Lambda_Y\| \ge \sqrt{3}t\Bigr) \\
\le& 2d \E_X \1_{\mathcal{A}}\exp\Big(-\frac{1}{C}\min\Big(\frac{t^4}{\|\E_Y Z\| \max_{i\le d} |W_i|^2},\frac{t^2}{\max_{i\le d} |W_i|^2}\Big)\Big)\\
&+   2d\exp\Big(-\frac{1}{C}\min\Big(\frac{t^4}{\|\sqrt{P}H\sqrt{P}\|^2 \|H\sqrt{P}\|_{\ell_2\to \ell_\infty}^2},\frac{t^2}{\|H\sqrt{P}\|_{\ell_2\to \ell_\infty}^2}\Big)\Big)\\
& + 2d\exp\Big(-\frac{1}{C}\min\Big(\frac{t^2}{\|\sqrt{P} H\|_{\ell_1\to \ell_2}^2 },\frac{t}{\|H\|_{\ell_1\to \ell_\infty}}\Big)\Big),
\end{align*}
which by the definition of $\mathcal{A}$ and the inequality $t \ge \|\sqrt{P}H\sqrt{P}\|$ gives
\begin{multline*}
  \p\Bigl(\|\Lambda_X H \Lambda_Y\| \ge \sqrt{3}t\Bigr) \\
  \le 6d\exp\Big(-\frac{1}{C'}\min\Big(\frac{t^2}{\|\sqrt{P}H\|_{\ell_1\to \ell_2}^2},\frac{t^2}{\|H\sqrt{P}\|_{\ell_2\to \ell_\infty}^2},\frac{t}{\|H\|_{\ell_1\to\ell_\infty}}\Big)\Big).
\end{multline*}
The theorem follows now by \eqref{eq:decoupling-submatrix} and an adjustment of constants.
\end{proof}

\bibliographystyle{amsplain}	
\bibliography{matrix-discrete-cube.bib}

\end{document}